\numberwithin{equation}{section}
\newtheorem{thm}{Theorem}[section]
\newtheorem{lem}[thm]{Lemma}
\newtheorem{prop}[thm]{Proposition}
\theoremstyle{definition}
\newtheorem{defin}[thm]{Definition}
\newtheorem{remark}[thm]{Remark}
\def\T{\mathbb{T}}
\def\Td{\T^d}
\renewcommand{\d}{{\mathrm d}} 
\renewcommand{\div}{\operatorname{div}} 
\newcommand{\norm}[1]{\left\|#1\right\|} 
\newcommand{\ip}[2]{\left\langle #1,#2 \right\rangle} 
\newcommand{\ii}[4]{\int_{#1}^{#2} #3 \: \d#4} 
\def\laweq{\stackrel{\mathcal L}{=}}
\def\supp{\sup_{\tau \in [0,t]}}
\def\sups{\sup_{s \in [0,t]}}
\def\tom{\widetilde{\Omega}}
\def\tF{\widetilde{\mathscr{F}}}
\def\tP{\widetilde{\mathbb{P}}}
\def\tE{\widetilde{\mathbb{E}}}
\def\X{\mathcal{X}}
\def\hom{\widehat{\Omega}}
\def\bH{{\boldsymbol H}}
\def\b #1{{\boldsymbol #1}}
\def\enne{\mathbb{N}}
\def\erre{\mathbb{R}}
\def\R{\mathbb{R}}
\def\P{\mathbb{P}}
\def\E{\mathop{{}\mathbb{E}}}
\def\cL{\mathscr{L}}
\def\cF{\mathscr{F}}
\def\cP{\mathscr{P}}
\def\OO{\mathcal{O}}
\def\embed{\hookrightarrow}
\def\luca #1{{\color{blue} #1}}
\DeclareFontFamily{OMX}{MnSymbolE}{}
\DeclareSymbolFont{MnLargeSymbols}{OMX}{MnSymbolE}{m}{n}
\DeclareFontShape{OMX}{MnSymbolE}{m}{n}{
	<-6>  MnSymbolE5
	<6-7>  MnSymbolE6
	<7-8>  MnSymbolE7
	<8-9>  MnSymbolE8
	<9-10> MnSymbolE9
	<10-12> MnSymbolE10
	<12->   MnSymbolE12
}{}
\DeclareFontShape{OMX}{MnSymbolE}{b}{n}{
	<-6>  MnSymbolE-Bold5
	<6-7>  MnSymbolE-Bold6
	<7-8>  MnSymbolE-Bold7
	<8-9>  MnSymbolE-Bold8
	<9-10> MnSymbolE-Bold9
	<10-12> MnSymbolE-Bold10
	<12->   MnSymbolE-Bold12
}{}
\let\llangle\@undefined
\let\rrangle\@undefined
\DeclareMathDelimiter{\llangle}{\mathopen}%
{MnLargeSymbols}{'164}{MnLargeSymbols}{'164}
\DeclareMathDelimiter{\rrangle}{\mathclose}%
{MnLargeSymbols}{'171}{MnLargeSymbols}{'171}
\begin{document}
	\title[Stochastic conservative Cahn--Hilliard/Allen--Cahn equations]
	{Stochastic Cahn--Hilliard and conserved Allen--Cahn equations
	\\ with logarithmic potential and conservative noise}
	\author{Andrea Di Primio, Maurizio Grasselli, Luca Scarpa}
	\address{Dipartimento di Matematica,
		Politecnico di Milano, Via E.~Bonardi 9, 20133 Milano, Italy}
	\email{andrea.diprimio@polimi.it}
	\email{maurizio.grasselli@polimi.it}
	\email{luca.scarpa@polimi.it}
	\subjclass[2020]{35Q92, 35R60, 60H15, 80A22}
	\keywords{Cahn--Hilliard equations; Allen--Cahn equations; Flory--Huggins potential; stochastic flows; conservative noise. \\
	The present research is part of the activities of ``Dipartimento di Eccellenza 2023-2027''.}
	
	\begin{abstract}
		We investigate the Cahn--Hilliard and the conserved Allen--Cahn equations with logarithmic type potential and conservative noise in a periodic domain. These features ensure that the order parameter takes its values in the physical range and, albeit the stochastic nature of the problems, that the total mass is conserved almost surely in time.
		For the Cahn--Hilliard equation,
		existence and uniqueness of probabilistically-strong solutions
		is shown up to the three-dimensional case.
		For the conserved Allen--Cahn equation,
		under a restriction on the noise magnitude,
		existence of martingale solutions is proved even in dimension three,
		while existence and uniqueness of probabilistically-strong solutions
		holds in dimension one. The analysis is carried out by studying the
		Cahn--Hilliard/conserved Allen--Cahn equations jointly,
		that is a linear combination of both the equations,
		which has an independent interest.
	\end{abstract}
	\maketitle
	
	\section{Introduction} \label{sec:intro} \noindent

The Cahn--Hilliard equation models fairly well an important phenomenon called phase separation, that is, the competition between mixing entropy and demixing effects in a mixture of two (or more) immiscible (or partially miscible) substances at a certain temperature below a critical threshold (see \cite{cher-mir-zel, elliott, Mir-CH} and references therein). This phenomenon has also recently become important in Cell Biology (see, for instance, \cite{Boeynaems,brang-tompa,dolgin18, dolgin22, poly}). If we consider a binary mixture in a bounded domain $\OO \subset \mathbb{R}^d$, $d \in \{1,2,3\}$, and we denote by $\varphi$ the relative concentration difference of the two species, then the original form of the Cahn--Hilliard equation is the following (see \cite{Cahn-Hill,Cahn-Hill2})
\begin{equation} \label{CHE}
			\begin{cases}
				\partial_t\varphi - \div(m(\varphi)\nabla\mu) = 0 \\
				\mu = -\varepsilon\Delta \varphi + F'_{\text{log}}(\varphi)
		    \end{cases}
		\end{equation}
in $\OO \times (0,T)$, with $T>0$ being a given final time.
Here, $m(\cdot)$ is the mobility (e.g. $m(x)=1-x^2$ or $m$ constant), while $\mu$ is called chemical potential and it is the functional derivative of the free energy functional
$$
\mathcal{E}(\varphi) = \int_\OO \left(\frac{\varepsilon^2}{2}\vert \nabla\varphi\vert^2 + F_{\text{log}}(\varphi)\right) \d x
$$
where $\epsilon>0$ is related to the thickness of the diffuse interface separating the two species and $F_{\text{log}}$ is the so--called Flory--Huggins potential (see \cite{Flory42,Huggins41}), namely,
 \begin{equation}\label{log}
			F_{\text{log}}(s)=\theta\left[(1+s)\ln(1+s) + (1-s)\ln(1-s)\right] - \theta_0s^2, \quad s\in(-1,1),
\end{equation}
with $0<\theta<\theta_0$, $\theta_0$ being a critical temperature depending on the nature of the mixture. Equation \eqref{CHE} equipped with no-flux or periodic boundary conditions
entailing mass conservation, that is, $\int_\OO \varphi(t)\d x$ remains constant over time. This is an important feature of a phase separation process. A second--order equation
which has been proposed as an alternative to equation \eqref{CHE} with constant mobility is the so-called conserved Allen--Cahn equation (see \cite{RS1992})
\begin{equation} \label{CACE}
			\begin{cases}
				\partial_t\varphi + m(\mu - \overline{\mu})=0 \\
				\mu = -\varepsilon\Delta \varphi + F_{\text{log}}'(\varphi)
		    \end{cases}
		\end{equation}
in $\OO \times (0,T)$. Here $\overline{\mu}$ is the spatial average of $\mu$ so that the total mass is still conserved if $\varphi$ satisfies the homogeneous Neumann or periodic boundary conditions.

The literature on the theoretical aspects of equation \eqref{CHE} is vast, especially in the constant mobility case (see, for instance, \cite{cher-mir-zel,Mir-CH} and references therein). Instead, equation \eqref{CACE}
has been analyzed in fewer contributions (see, for instance, \cite{GGW20,GP} and references therein). In both cases, it is important to note that the classical regular double--well approximation (e.g., $P(s)= (1-s^2)^2$)
of the potential $F_{\text{log}}$  no longer ensures that $\varphi$ takes its values in the physical range $[-1,1]$.

In the pioneering paper \cite{cook}, the author proposed a stochastic variant of \eqref{CHE}, also known as Cahn--Hilliard--Cook equation, by incorporating a Wiener noise into the mass flux $J= -m\nabla\mu$ to account for random thermal fluctuations (see also \cite{BMW08} and references therein). Since then, there have been several papers devoted to stochastic versions of \eqref{CHE} with a double--well polynomial, but few with \eqref{log}:
see \cite{deb-goud, scar-SCH} in the case of constant mobility, and \cite{scarpa21} for non-constant (degenerate) mobility. Stochastic Allen--Cahn equations have also been studied, mostly in the non-conserved case (see \cite{ABDK16,bertacco21,BOS21,DGS,orr-scar,SZ}).
However, as far as we know, the joint case of a conservative noise and logarithmic potential has never been considered neither for equation \eqref{CHE} nor for equation \eqref{CACE}. This choice is closer to the original Cahn--Hilliard--Cook equation and allows the conservation of mass almost surely in time. Indeed, even if the deterministic model satisfies the mass conservation property, the same can not be inferred for the stochastic counterpart with an arbitrary noise term, since the mass is usually only conserved under expectations (see, however, \cite{deb-zamb,goud,goud-manca,goud-xie} where reflection measures are used).

The goal of this paper is to consider equations \eqref{CHE} and \eqref{CACE} with periodic boundary conditions and subject to a random forcing depending on $\varphi$, i.e.~in multiplicative form. This kind of noise can be finely tuned so that mass conservation holds for almost any realisation of the process starting from a given random variable. In this case, the mass conservation property holds almost everywhere in the probabilistic domain. The main idea is to consider a noise coefficient in divergence-form: this
was already proposed in other contexts, for which we refer e.g.~to
\cite{FG,grun-dirr,grun-gess,grun-metz}).

More precisely, letting $\T^d = \R^d / \: \mathbb{Z}^d$ be the $d$-dimensional torus, we consider the following problems in a filtered probability space
$(\Omega, \, \cF, (\cF_t)_{t \in [0,T]},\, \P)$ satisfying the usual conditions (i.e. the filtration is saturated and right-continuous):
	\begin{itemize}
		\item[(1)] the stochastic Cahn--Hilliard equations with conservative noise
		\begin{equation} \label{eq:ch}
			\begin{cases}
				\d \varphi - \Delta \mu \,\d t = \div \left( \b{G}(\varphi)\right) \d W & \quad \text{ in }\T^d \times (0,T) \\
				\mu = -\Delta \varphi + F'(\varphi) & \quad \text{ in }\T^d \times (0,T) \\
				\varphi(\cdot \:, 0) = \varphi_0 & \quad \text{ in } \Td.
			\end{cases}
		\end{equation}
		\item[(2)] the stochastic conserved Allen--Cahn equations with conservative noise
		\begin{equation} \label{eq:ac}
			\begin{cases}
				\d \varphi + \left(\mu - \overline{\mu}\right)\,\d t = \div \left( \b{G}(\varphi)\right)\,\d W & \quad \text{ in }\T^d \times (0,T) \\
				\mu = -\Delta \varphi + F'(\varphi) & \quad \text{ in }\T^d \times (0,T) \\
				\varphi(\cdot \:, 0) = \varphi_0 & \quad \text{ in } \Td.
			\end{cases}
		\end{equation}
	\end{itemize}
Here we have set $m=\varepsilon=1$ and, in both \eqref{eq:ch} and \eqref{eq:ac}, periodic boundary conditions are imposed.
Moreover, $W$ is some Wiener process with values in some separable Hilbert space,
while the vector field $\b{G}$ and the potential density function $F$, which is a generalization of \eqref{log}, are given.

We will not study problems \eqref{eq:ch} and \eqref{eq:ac} separately, but we shall consider first the following stochastic Allen--Cahn/Cahn--Hilliard equation
\begin{equation} \label{eq:chac}
		\begin{cases}
			\d \varphi + \left[-\alpha \Delta \mu + \beta(\mu - \overline{\mu}) \right]\d t = \div \left( \b{G}(\varphi)\right) \d W & \quad \text{ in }\T^d \times (0,T) \\
			\mu = -\Delta \varphi + F'(\varphi) & \quad \text{ in }\T^d \times (0,T) \\
			\varphi(\cdot \:, 0) = \varphi_0 & \quad \text{ in } \Td.
		\end{cases}
	\end{equation}
where $\alpha \geq 0$ and $\beta \in [0,1]$ be arbitrary with $\alpha+\beta>0$.
We recall that this equation in the deterministic case has been justified in \cite{kar-kat} (see also \cite{kar-nag})
and stochastic versions have been studied in \cite{ant-far-kar,ant-kar-mill}.
However, in all
those contributions the Allen-Cahn equation is the standard one, $F$ is a double--well polynomial and the total mass is not conserved.  Here, instead, conservation of mass holds.
Let us provide a formal proof that shall become rigorous once suitable assumptions on $\b{G}$ are made. Any solution $\varphi(\cdot) \in H^1(\Td)$ to \eqref{eq:chac} satisfies
\begin{equation*}
	( \varphi(t),\psi)_H +
	\int_0^t\!\int_{\Td} \alpha\nabla  \mu(s)\cdot \nabla \psi + \beta(\mu(s)-\overline{\mu(s)})\psi\,\d s
	= ( \varphi(0),\psi)_{H} +
	\left(\int_0^t \div \b{G}( \varphi(s))\,\d  W(s), \psi\right)_{H}
\end{equation*}
for all $\psi \in H^1(\Td)$, $t \in [0,T]$ and $\P$-almost surely. Observe that
choosing $\psi \equiv 1$ leads to
\begin{equation*}
	\int_{\Td} \varphi(t) \: \d x
	= \int_{\Td} \varphi_0 \: \d x +
	\left(\int_0^t \div \b{G}( \varphi(s))\,\d  W(s), 1 \right)_{H},
\end{equation*}
where by the structure of the noise and by the stochastic Fubini theorem (see \cite[Theorem 6.1.4]{LiuRo})
\[
\begin{split}
	\left(\int_0^t \div \b{G}( \varphi(s))\,\d  W(s), 1 \right)_{H} = \int_0^t \left( \div \b{G}( \varphi(s)), 1 \right)_{H}\d  W(s) = 0.
\end{split}
\]
The computation assumes formally that $\div \b{G}( \varphi(\cdot))$ defines a progressively measurable element in a suitable space of Hilbert--Schmidt operators
with values in $L^2(\Td)$ (precise assumptions will be made in the next section).

We first prove the uniqueness of a probabilistically--strong solution to problem \eqref{eq:chac} from which we deduce the same result for problem \eqref{eq:ch} (i.e., $\alpha=1$ and $\beta=0$).
We can also prove uniqueness for problem \eqref{eq:ac} but only in dimension one with a suitable smallness assumption on the magnitude of the noise. Then,
we investigate existence of solutions for both problems.
First, we stablish the existence of a probabilistically--strong
solution to \eqref{eq:chac} in the case $\alpha>0$ (hence to \eqref{eq:ch} as well):
this hinges on suitable elliptic regularity results in the equation
for the order parameter, which are indeed possible
given the choice $\alpha>0$. Secondly, we show
the existence of a martingale solution to problem \eqref{eq:ac}
via a vanishing viscosity argument as $\alpha\searrow0$ in
the mixed problem \eqref{eq:chac}: thanks to the uniqueness result,
in dimension $1$ thus is also unique and probabilistically-strong.
In the Allen--Cahn model \eqref{eq:ac} a smallness assumption on the noise intensity
is in order. This is very natural and is
due to the fact that in \eqref{eq:ac} the leading linear term
and the noise coefficient have the same differential order,
which is not the case for the Cahn--Hilliard model.

The analysis of the system \eqref{eq:ch} and \eqref{eq:ac}
is the starting point of several research directions, which are
already under investigation. For example, long-time behaviour
of solutions needs to be addressed, in terms of ergodicity,
existence/uniqueness of invariant measures, and Kolmogorov equations.
Furthermore, we note that the technical assumption on the magnitude of the
divergence-form noise
in the Allen--Cahn equation suggests that other possible choices of the noise
might be more suited to the Allen--Cahn dynamics: for example,
one might wonder whether it is possible to consider conservative noise coefficients
in the form $[G(\varphi)-\overline{G(\varphi)}]\d W$, by analogy with
the conserved Allen--Cahn operator acting on $\mu$. The
feasibility of the analysis in this case is far from obvious, due to
the presence of a logarithmic potential, but is currently under investigation.
Eventually, the extension of the above results to the physically-relevant case
of Neumann boundary conditions is also in progress.

The plan of the paper goes as follows. The necessary preliminary notions, the notation and the main results are stated throughout Section \ref{sec:main}.
Uniqueness results are established in Section \ref{sec:unique}, while the existence of solutions is proven in Sections \ref{sec:existence} and \ref{sec:existence2}.

	\section{Preliminaries and main results} \label{sec:main}
	\subsection{Functional setting and notation} \label{ssec:notation} The notation and the functional setting employed in the present work are illustrated in the following paragraphs.
	\paragraph{\textit{Probabilistic framework.}}	Let $(\Omega,\cF,(\cF_t)_{t\in[0,T]},\P)$ be a filtered probability space
	satisfying the usual conditions (namely the filtration is saturated and right-continuous),
	with $T>0$ being a prescribed final time. Let $\cP$ denote the progressive $\sigma$-algebra on $\Omega\times[0,T]$.
	For any couple of random variables defined on $\Omega$ and taking values in the same measurable space, the symbol $\laweq$ denotes the identity of their laws.
	Given a probability space $(E,\, \mathcal{M},\, m)$ and a Banach space $X$,
for any positive real quantity $p \geq 1$ the symbol $L^p(E, \mathcal{M}, m; X)$ (or simply $L^p(E; X)$) denotes the set of strongly measurable $X$-valued random variables on $E$ with finite moments up to order $p$. Sometimes, when the space $X$ is allowed to depend on time, we may denote the resulting space with $L^p_\cP(E;X)$
to stress that measurability is intended with respect to the progressive
$\sigma$-algebra $\cP$.
	Throughout the paper, $W$ denotes a cylindrical Wiener process on some separable, fixed Hilbert space $U$. The family $\{u_j\}_{j\in\enne} \subset U$ is a fixed orthonormal system for $U$. Let us precise the rigorous interpretation of the stochastic terms appearing in \eqref{eq:ch} and \eqref{eq:ac}.
	As a cylindrical process on $U$, the Wiener process $W$ admits the representation
	\begin{equation} \label{eq:representation}
		W = \sum_{k=0}^{+\infty} \beta_k u_k,
	\end{equation}
	where $\{\beta_k\}_{k \in \enne}$ is a family of real and independent Brownian motions. However, in order to make sense of the series \eqref{eq:representation}, it is necessary to enlarge the space $U$. In general, there exists some larger Hilbert space $U_0$ such that $U \embed U_0$ with Hilbert-Schmidt embedding $\iota$ and such that we can identify $W$ as a $Q^0$-Wiener process on $U_0$, for some trace-class operator $Q^0$ (see \cite[Subsection 2.5.1]{LiuRo}). Actually, it holds that $Q^0 = \iota \circ \iota^*$. In the following, we may implicitly assume this extension by simply saying that $W$ is a cylindrical process on $U$. This holds also for stochastic integration with respect to $W$. Indeed, the symbol
	\[
	\int_0^\cdot B(s)\,\d W(s) := \int_0^\cdot B(s) \circ \iota^{-1}(s)\,\d W(s),
	\]
	for every suitable progressively measurable process $B$. It is well known that such a  definition is well posed and does not depend on the choice of $U_0$ or $\iota$ (see \cite[Subsection 2.5.2]{LiuRo}).
	\paragraph{\textit{Functional spaces.}}
	Let $E$ be a Banach space. The symbol $\boldsymbol E$ denotes the product space $E^d$ (or $E^{d\times d}$), while its topological dual is denoted by $E^*$, while the duality pairing between $E^*$ and $E$ is denoted by $\ip{\cdot}{\cdot}_{E^*,E}$.
	If $E$ is a Hilbert space, then the scalar product of $E$ is denoted by $(\cdot,\cdot)_E$.
	Given two separable Hilbert spaces $E$ and $F$, the space of Hilbert-Schmidt operators
	from $E$ to $F$ is denoted by the symbol $\cL^2(E,F)$
	and endowed with its canonical norm $\norm{\cdot}_{\cL^2(E,F)}$.
	For every $s\in[1,+\infty]$,
	the symbol $L^s(X; Y)$ indicates the usual spaces of strongly measurable, Bochner-integrable functions
	defined on the Banach space $X$ and with values in the Banach spaces $Y$. If $Y$ is omitted, it is understood that $Y = \mathbb{R}$.
    For all $s\in(1,+\infty)$ and for every separable and reflexive Banach space $E$
	we also define
	\[
	L^s_w(\Omega; L^\infty(0,T; E^*)):=
	\left\{v:\Omega\to L^\infty(0,T; E^*) \text{ weakly*-measurable and }
	\norm{v}_{L^\infty(0,T; E^*)}\in L^s(\Omega)
	\right\}\,,
	\]
	which yields by
	\cite[Theorem 8.20.3]{edwards} the identification
	\[
	L^s_w(\Omega; L^\infty(0,T; E^*))=
	\left(L^{\frac{s}{s-1}}(\Omega; L^1(0,T; E))\right)^*\,.
	\]
	Concerning real Sobolev spaces, we employ the classical notation
	$W^{s,p}(\T^d)$, where $s\in\erre$ and $p\in[1,+\infty]$
	and we denote by $\norm{\cdot}_{W^{s,p}(\T^d)}$ their canonical norms. Here, $\mathbb T^d$ is the $d$-dimensional flat torus, with $d\in\{1,2,3\}$,
	so all that the elements of these spaces are periodic.
	We define the Hilbert space $H^s(\T^d):=W^{s,2}(\T^d)$ for all $s\in\erre$,
	endowed with its canonical norm $\norm{\cdot}_{H^s(\T^d)}$. Given $R > 0$, the closed ball in $W^{k,p}(\Td)$ centered at the origin with radius $R$ is denoted by $\mathbf{B}^{k,p}_R$, with the understanding that $W^{0,p}(\Td) = L^p(\Td)$ for every $p \geq 1$. Let us introduce further notation for the functional spaces
	\[
	H:=L^2(\T^d)\,, \qquad V:=H^1(\T^d)\,
	\]
	endowed with their standard norms $\norm{\cdot}_H$,
	$\norm{\cdot}_{V}$, respectively.
	As usual, we identify the Hilbert space $H$ with its dual through
	the corresponding Riesz isomorphism, so that we have the variational structure
	\[
	H^2(\Td) \embed V\embed H \embed V^* \embed (H^2(\Td))^*,
	\]
	with dense and compact embeddings (both in the cases $d = 2$ and $d = 3$). Also, we set the zero-mean spaces
	\[
	H_0 := \left\{ u \in H : \overline{u} := \dfrac{1}{|\Td|}\int_{\Td} u \: \d x = 0\right\}, \qquad V_0 := V \cap H_0.
	\]
	The space $H_0$ is endowed with the structure induced by $H$, hence we still carry over the same notation. Instead, owing to the Poincaré inequality, we set the $H^1$-seminorm structure on $V_0$, namely
	\[
	( u,\,v)_{V_0} := (\nabla u, \,\nabla v)_H, \qquad \|u\|_{V_0} := \|\nabla u\|_H, \qquad u,\,v \in V_0.
	\]
	This generates the zero-mean variational triplet
	\[
	V_0 \embed H_0 \embed V_0^*,
	\]
	once again with compact and dense embeddings in two and three dimensions.

	\paragraph{\textit{A family of linear operators.}} Let $\alpha > 0$ and $\beta \in [0,1]$, and consider the linear operator
	\[
	A_{\alpha\beta} : V \to V^*, \qquad \ip{A_{\alpha\beta}\psi}{\phi}_{V^*,V}=\alpha \int_{\T^d}\nabla\psi\cdot\nabla\phi \: \d x + \beta \int_{\Td} \left(\psi-\overline{\psi}\right)\left(\phi-\overline{\phi}\right) \: \d x
	\qquad \forall \: \psi,\,\phi\in V.
	\]
	For any $\alpha > 0$ and $\beta \in [0,1]$, the operator $A_{\alpha\beta}$ is a linear perturbation of	the variational realization of the negative Laplacian with periodic
	 boundary conditions. In particular, the restriction of $A_{\alpha\beta}$ to the space $V_0$ is an isomorphism between $V_0$ and its dual $V_0^*$, with inverse $\mathcal N_{\alpha \beta}:V_0^*\to V_0$. When $\alpha = 1$ and $\beta = 0$, given the Hilbert structure of $V_0$, the restriction of $A_{01}$ to $V_0$ is the Riesz isomorphism between $V_0$ and $V_0^*$.
	Finally, let us show the following basic result.
	\begin{prop} \label{prop:constants}
		For any fixed $\alpha > 0$ and $\beta \in [0,1]$, the norm
		\[\|\cdot\|_*: V_0^* \to [0,+\infty), \qquad v \mapsto \|\nabla \mathcal{N}_{\alpha\beta}v\|_\bH
		\]
		is an equivalent norm in $V_0^*$, while
		\[
		\|\cdot\|_{\sharp} : V^* \to [0,+\infty), \qquad
		v \mapsto (\|\nabla \mathcal{N}_{\alpha\beta}(v-\overline{v})\|_\bH^2 + |\overline{v}|^2)^\frac 12
		\]
		is an equivalent norm in $V^*$. In particular, there exist two constants $C_1, C_2 > 0$ only depending on $\alpha$ such that
		\begin{align*}
			C_1\|v\|_* \leq \|v\|_{V_0^*} \leq C_2\|v\|_*, & \qquad \forall  \: v \in V_0^*, \\
			C_1\|v\|_\sharp \leq \|v\|_{V^*} \leq C_2\|v\|_\sharp, & \qquad \forall  \: v  \in V^*.
		\end{align*}
	\end{prop}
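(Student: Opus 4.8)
The plan is to establish both equivalences by exploiting the fact that $A_{\alpha\beta}$ restricted to $V_0$ is an isomorphism onto $V_0^*$, so that $\mathcal{N}_{\alpha\beta}$ is bounded and bounded below as a map $V_0^* \to V_0$, combined with the definition of the $V_0$-norm as $\|\nabla(\cdot)\|_{\bH}$. First I would treat $\|\cdot\|_*$. For $v \in V_0^*$, by definition $\|v\|_* = \|\nabla \mathcal{N}_{\alpha\beta} v\|_{\bH} = \|\mathcal{N}_{\alpha\beta}v\|_{V_0}$. Since $\mathcal{N}_{\alpha\beta}: V_0^* \to V_0$ is a linear isomorphism between Banach spaces, the open mapping theorem (or rather, boundedness of $\mathcal{N}_{\alpha\beta}$ and of its inverse $A_{\alpha\beta}|_{V_0}$) gives constants $c, C > 0$ with $c\|v\|_{V_0^*} \le \|\mathcal{N}_{\alpha\beta}v\|_{V_0} \le C\|v\|_{V_0^*}$ for all $v \in V_0^*$; this is exactly the claimed two-sided bound with $C_1 = c$, $C_2 = 1/C$ after relabelling (up to swapping the roles). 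One should note the dependence of the constants on $\alpha$ (and in principle on $\beta$, but $\beta$ ranges over the compact set $[0,1]$ and the estimate can be made uniform there, which is presumably why only $\alpha$ is mentioned); making this dependence explicit amounts to tracking the coercivity and continuity constants of $A_{\alpha\beta}$, namely $\ip{A_{\alpha\beta}\psi}{\psi}_{V^*,V} = \alpha\|\nabla\psi\|_{\bH}^2 + \beta\|\psi - \overline{\psi}\|_H^2 \ge \alpha\|\psi\|_{V_0}^2$ for $\psi \in V_0$, and $\|A_{\alpha\beta}\psi\|_{V_0^*} \le (\alpha + \beta C_P)\|\psi\|_{V_0}$ where $C_P$ is the Poincaré constant.

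Next I would handle $\|\cdot\|_{\sharp}$ on $V^*$. The key structural observation is that $V^*$ splits as $V_0^* \oplus \mathbb{R}$: any $v \in V^*$ decomposes via its mean $\overline{v} := |\Td|^{-1}\ip{v}{1}_{V^*,V}$ as $v = (v - \overline{v}) + \overline{v}$, where $v - \overline{v} \in V_0^*$ (it annihilates constants) and $\overline{v}$ is identified with a constant functional. This decomposition is topological: $\|v\|_{V^*}$ is equivalent to $\|v - \overline{v}\|_{V_0^*} + |\overline{v}|$, which follows from boundedness of the projection $v \mapsto v - \overline{v}$ and of $v \mapsto \overline{v}$ on $V^*$ (the latter because $1 \in V$). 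Then $\|v\|_{\sharp}^2 = \|v - \overline{v}\|_*^2 + |\overline{v}|^2$ by definition, and applying the already-proved equivalence $\|\cdot\|_* \simeq \|\cdot\|_{V_0^*}$ to the first summand, together with the elementary equivalence of $(a^2 + b^2)^{1/2}$ with $a + b$ for $a, b \ge 0$, yields $\|v\|_{\sharp} \simeq \|v - \overline{v}\|_{V_0^*} + |\overline{v}| \simeq \|v\|_{V^*}$ with the same constants $C_1, C_2$ up to harmless universal factors that can be absorbed.

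I expect the only genuine subtlety — and it is a mild one — to be the bookkeeping that makes the constants depend on $\alpha$ alone and not on $\beta$: one has to check that the lower bound $\ip{A_{\alpha\beta}\psi}{\psi}_{V^*,V} \ge \alpha\|\psi\|_{V_0}^2$ loses nothing as $\beta$ varies, and that the upper bound, while degrading as $\beta \to 1$, stays controlled by a constant uniform over $\beta \in [0,1]$. Everything else is a routine invocation of the bounded-inverse theorem and the orthogonal-type splitting of $V^*$; no delicate estimate is involved. I would therefore organize the write-up as: (i) recall coercivity and continuity of $A_{\alpha\beta}|_{V_0}$; (ii) deduce the $\|\cdot\|_*$ equivalence with $\alpha$-dependent constants; (iii) set up the mean-value splitting of $V^*$ and conclude the $\|\cdot\|_{\sharp}$ equivalence.
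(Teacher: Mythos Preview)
Your proposal is correct and follows essentially the same route as the paper: both arguments rest on the coercivity/continuity of $A_{\alpha\beta}|_{V_0}$ to obtain the $\|\cdot\|_*$ equivalence with explicit $\alpha$-dependent constants (the paper gets $\alpha\|v\|_* \le \|v\|_{V_0^*} \le (\alpha + C_P^2)\|v\|_*$, matching your coercivity/continuity constants up to the harmless slip $C_P$ versus $C_P^2$), and then both use the mean-value splitting $v = (v-\overline v) + \overline v$ to reduce the $\|\cdot\|_\sharp$ case to the first part. The only cosmetic difference is that the paper writes out the four inequalities by direct computation rather than invoking the bounded-inverse theorem and then tracking constants, but the underlying estimates are identical.
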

	\begin{proof}
		The fact that both the above functionals are indeed norms is immediate. Let us prove the first claim. Let $v \in V^*_0$. We have
		\[
		\begin{split}
			\alpha\|v\|_{*}^2 & = \alpha\|\nabla \mathcal{N}_{\alpha\beta}v\|_\bH^2 \\
			& =  \ip{A_{\alpha\beta}\mathcal{N}_{\alpha\beta}v}{\mathcal{N}_{\alpha\beta}v}_{V^*_0,V_0} - \beta\|\mathcal{N}_{\alpha\beta}v\|_{H}^2 \\
			& \leq \|v\|_{V^*_0}\|\mathcal{N}_{\alpha\beta}v\|_{V_0} \\
			& \leq \dfrac{1}{2\alpha} \|v\|_{V^*_0}^2 + \dfrac{\alpha}{2}\|v\|_{*}^2.
		\end{split}
		\]
		Conversely, let $w \in V_0$ be of unitary norm. Then
		\[
		\begin{split}
			|\langle v, w \rangle_{V_0^*,V_0}| & = |\langle A_{\alpha \beta} \mathcal{N}_{\alpha \beta}v, w \rangle_{V_0^*,V_0}| \\
			& \leq  \alpha |(\nabla \mathcal{N}_{\alpha \beta}v, \nabla w)_\bH| + \beta|(\mathcal{N}_{\alpha \beta}v, w)_H| \\
			& \leq \alpha\|\nabla \mathcal{N}_{\alpha \beta}v\|_\bH\|\nabla w\|_\bH + \beta\|\mathcal{N}_{\alpha \beta}v\|_H\|w\|_H \\
			& \leq (\alpha + C_P^2)\|v\|_*,
		\end{split}
		\]
		where $C_P$ denotes the Poincaré constant of $\Td$. Hence,
		\[
		\alpha\|v\|_* \leq \|v\|_{V_0^*} \leq (\alpha + C_P^2)\|v\|_*, \qquad \forall \: v \in V_0^*.
		\]
		for all arbitrary but fixed $\alpha > 0$ and $\beta \in [0,1]$. Let now $v \in V^*$ and $w \in V$ be of unitary norm.
        In the general case, we have, recalling that $\overline{v} = |\Td|^{-1}\langle v, 1 \rangle_{V^*,V}$,
		\[
		\begin{split}
			\alpha^2\|v\|^2_\sharp & = \alpha^2 \|v - \overline{v}\|^2_* + \alpha^2|\overline{v}|^2 \\
			& \leq \|v - \overline{v}\|_{V_0^*}^2 + \alpha^2|\Td|^{-2}\|v\|_{V^*}^2\|1\|_V^2 \\
			&\leq
			(1+C_P^2)
			\|v - \overline{v}\|_{V^*}^2 + \alpha^2|\Td|^{-1}\|v\|_{V^*}^2\\
			& \leq 2(1+C_P^2)\|v\|^2_{V^*}
			 + 2(1+C_P^2)\|\overline{v}\|_{V^*}^2 + \alpha^2|\Td|^{-1}\|v\|_{V^*}^2 \\
			& \leq \left(4(1+C_P^2)
			 + \alpha^2|\Td|^{-1}\right)\|v\|_{V^*}^2.
		\end{split}
		\]
		Conversely, we have
		\[
		\begin{split}
			|\langle v, w \rangle_{V^*,V}| & \leq |\langle A_{\alpha \beta} \mathcal{N}_{\alpha \beta}(v-\overline{v}), w \rangle_{V^*,V}| + |(\overline{v}, w)_H| \\
			& \leq  \alpha |(\nabla \mathcal{N}_{\alpha \beta}(v-\overline{v}), \nabla w)_\bH| + \beta|(\mathcal{N}_{\alpha \beta}(v-\overline v), w-\overline{w})_H|+ |(\overline{v}, w)_H| \\
			& \leq \alpha\|\nabla \mathcal{N}_{\alpha \beta}(v-\overline{v})\|_\bH\|\nabla w\|_\bH + \beta\|\mathcal{N}_{\alpha \beta}(v-\overline{v})\|_H\|w-\overline{w}\|_H + \|\overline{v}\|_H\|w\|_H \\
			& \leq \alpha\|\nabla \mathcal{N}_{\alpha \beta}(v-\overline{v})\|_\bH + C_P^2\|\nabla \mathcal{N}_{\alpha \beta}(v-\overline{v})\|_H + |\Td|^\frac12 |\overline{v}| \\
			& \leq (\alpha + C_P^2 + |\Td|^\frac12)\|v\|_\sharp ,
		\end{split}
		\]
		hence
		\[
		\frac{\alpha}{\sqrt{4(1+C_P^2)
			 + \alpha^2|\Td|^{-1}}}
			 \|v\|_\sharp \leq \|v\|_{V^*} \leq (\alpha + C_P^2 + |\Td|^\frac12)\|v\|_\sharp, \qquad \forall \: v \in V^*,
		\]
		and the claims are proved.
	\end{proof}

	\subsection{Basic assumptions} \label{ssec:assumptions}
	The following assumptions are in order throughout the paper, regardless of the specific problem of interest.
	\begin{enumerate}[label = \textbf{(A\arabic*)}, ref = \textbf{(A\arabic*)}]
		\item\label{hyp:potential} The potential $F$ admits the decomposition
		$$
		F(s)= \Psi(s)+ R(s),
		$$
		where $\Psi: [-1,1]\to \mathbb{R}$, called singular part of $F$, is a strictly convex function satisfying the property $\Psi\in C^0([-1,1])\cap C^2((-1,1))$.
		Moreover, we assume that
		\begin{equation*}
			\qquad	\lim_{s \to -1^+} \Psi'(s)
			= -\infty, \qquad \lim_{s \to 1^-} \Psi'(s) = +\infty, \qquad
			\lim_{s \to -1^+} \Psi''(s) = +\infty, \qquad \lim_{s \to 1^-} \Psi''(s) = +\infty.
		\end{equation*}
		In the following, we shall consider, without relabeling,
		an extension of $\Psi$ over the whole real line by setting
		$\Psi(s) = +\infty$ whenever $s \notin [-1,1]$.
		The function $R$, called regular part of $F$,
		is such that $R\in C^2(\mathbb{R})$ and
		$$
		|R''(s)|\leq C_R,\qquad \forall\, s\in\mathbb{R},
		$$
		for some given $C_R > 0$. Without loss of generality, we assume that $F$ is non-negative.\\[0.3\baselineskip]
		\item\label{hyp:G} The operator
		$\b{G}:\mathbf B^{0,\infty}_1 \to \cL^2(U,\b{H})$ satisfies
		\[
		\b{G}(\psi)[u_k]= \b{g}_k(\psi) =
		(g_k^i(\psi))_{i=1}^d
		\qquad\forall\,k\in\enne_+\quad\forall\,\psi\in\mathbf B^{0,\infty}_1,
		\]
		where the sequence
		$\{\b{g}_k\}_{k\in\enne_+}\subset \b{W}^{2,\infty}(-1,1)$ satisfies
		\[
		L^2_\b{G}:= \sum_{k=1}^{+\infty}\norm{\b{g}_k}_{\b{W}^{2,\infty}(-1,1)}^2
		<+\infty.
		\]
	\end{enumerate}
	It is straightforward to observe that, on account of the representation given by Assumption \ref{hyp:G}, one can introduce formal operators acting on the operator $\b{G}$. Since $\mathbf B^{0,\infty}_1 \subset H$, it is possible to define the weak divergence process
	\[
	\div \b{G} : \mathbf B^{0,\infty}_1 \to \cL^2(U, V_0^*)
	\]
	acting according to
	\[
	\langle \div \b{G}(\psi)[u_k], \xi \rangle_{V_0^*,V_0} := ( \b{g}_k(\psi),\, \nabla \xi)_H, \quad k\in\enne_+.
	\]
	On elements of $\mathbf B^{0,\infty}_1 \cap V$, it is straightforward to prove by interpolation that
	\[
	\div \b{G} : \mathbf B^{0,\infty}_1 \cap V \to \cL^2(U, H_0)
	\]
	is well defined and can be interpreted strongly, namely
	\[
	\div \b{G}(\psi)[u_k] := \div \b{g}_k(\psi) = \b{g}'_k(\psi) \cdot \nabla \psi\,, \quad k\in\enne_+.
	\]
	This is the formal interpretation of the stochastic diffusions in \eqref{eq:ch} and \eqref{eq:ac}.
	Let us analyze the Lipschitz continuity of the divergence operator we just introduced. First, we point out why we introduce the divergence operator in a weak setting. The strong version acting on functions of $\mathbf B^{0,\infty}_1 \cap V$ is only locally Lipschitz continuous with respect to a very strong norm, as shown below.
	\begin{lem} \label{lem:divlip1}
		The operator $\div \b{G}:\mathbf B^{0,\infty}_1 \cap V \to \cL^2(U, H_0)$
		is locally Lipschitz-continuous,
		on bounded subsets of $\mathbf B^{0,\infty}_1 \cap V$,
		with respect to the natural norm of $L^\infty(\Td) \cap V$.
		More precisely, for every $v,w\in \mathbf B^{0,\infty}_1 \cap V$ it holds that
		\[
		\|\div\b{G}(v) - \div \b{G}(w)\|_{\cL^2(U, H_0)}^2
		\leq
		2L_\b{G}^2\left(\|\nabla w(v-w)\|^2_{\b{L}^2(\Td)}+
		\|\nabla(v-w)\|^2_H\right).
		\]
	\end{lem}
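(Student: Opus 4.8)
The plan is to compute the Hilbert–Schmidt norm directly from the strong representation $\div \b{G}(\psi)[u_k] = \b{g}'_k(\psi)\cdot\nabla\psi$ and exploit the summable $\b{W}^{2,\infty}$-bound on $\{\b{g}_k\}$. By definition of the $\cL^2(U,H_0)$-norm, one has
\[
\|\div\b{G}(v)-\div\b{G}(w)\|_{\cL^2(U,H_0)}^2
= \sum_{k=1}^{+\infty}\int_{\Td}\bigl|\b{g}'_k(v)\cdot\nabla v - \b{g}'_k(w)\cdot\nabla w\bigr|^2\,\d x.
\]
The main step is the algebraic splitting inside the integrand: add and subtract $\b{g}'_k(w)\cdot\nabla v$ (or $\b{g}'_k(v)\cdot\nabla w$; either works) to write
\[
\b{g}'_k(v)\cdot\nabla v - \b{g}'_k(w)\cdot\nabla w
= \bigl(\b{g}'_k(v)-\b{g}'_k(w)\bigr)\cdot\nabla v
+ \b{g}'_k(w)\cdot\nabla(v-w).
\]

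Then I would estimate the two resulting pieces. For the second piece, $|\b{g}'_k(w)\cdot\nabla(v-w)|\le \|\b{g}'_k\|_{L^\infty(-1,1)}\,|\nabla(v-w)|$, and summing the squares over $k$ produces $L_\b{G}^2\|\nabla(v-w)\|_H^2$, where $L_\b{G}$ collects $\bigl(\sum_k\|\b{g}_k\|_{\b{W}^{2,\infty}(-1,1)}^2\bigr)^{1/2}$ (consistent with the constant $L^2_\b{G}$ of Assumption \ref{hyp:G}; note $\|\b{g}'_k\|_\infty, \|\b{g}''_k\|_\infty \le \|\b{g}_k\|_{\b W^{2,\infty}}$). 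For the first piece, since $v,w$ take values in $[-1,1]$ and $\b{g}'_k\in W^{1,\infty}(-1,1)$, the mean value theorem gives $|\b{g}'_k(v)-\b{g}'_k(w)|\le \|\b{g}''_k\|_{L^\infty(-1,1)}\,|v-w|$ pointwise, hence $|(\b{g}'_k(v)-\b{g}'_k(w))\cdot\nabla v|\le \|\b{g}''_k\|_\infty\,|v-w|\,|\nabla v|$. Here there is a small technical point worth flagging: the bound naturally features $|\nabla v|\,|v-w|$ rather than $|\nabla w|\,|v-w|$; to land exactly on the stated right-hand side one chooses the splitting that subtracts $\b{g}'_k(v)\cdot\nabla w$ instead, producing $|\b{g}'_k(v)|\,|\nabla(v-w)|$ and $|\b{g}''_k|\,|v-w|\,|\nabla w|$, which is the form written in the statement. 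Summing over $k$ yields $L_\b{G}^2\|\nabla w\,(v-w)\|_{\b L^2(\Td)}^2$ for the first term.

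Finally, using $(a+b)^2\le 2a^2+2b^2$ inside the integral before summing, and combining the two bounds, gives
\[
\|\div\b{G}(v)-\div\b{G}(w)\|_{\cL^2(U,H_0)}^2
\le 2L_\b{G}^2\Bigl(\|\nabla w\,(v-w)\|_{\b L^2(\Td)}^2 + \|\nabla(v-w)\|_H^2\Bigr),
\]
which is the claim; local Lipschitz continuity on bounded subsets of $\mathbf B^{0,\infty}_1\cap V$ with respect to the $L^\infty(\Td)\cap V$ norm follows because $\|\nabla w\,(v-w)\|_{\b L^2}\le \|v-w\|_{L^\infty(\Td)}\,\|\nabla w\|_H$ and $\|\nabla w\|_H$ is controlled on bounded subsets. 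The only genuine obstacle is bookkeeping: making sure the two additive terms are split so that the gradient factors match the asserted right-hand side exactly, and that every $k$-sum converges by Assumption \ref{hyp:G}; there is no analytic difficulty beyond the pointwise mean value estimate and Minkowski/Cauchy–Schwarz.
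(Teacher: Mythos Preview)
Your proposal is correct and follows essentially the same approach as the paper's proof: expand the Hilbert--Schmidt norm, apply the algebraic splitting $\b{g}'_k(v)\cdot\nabla v - \b{g}'_k(w)\cdot\nabla w = (\b{g}'_k(v)-\b{g}'_k(w))\cdot\nabla w + \b{g}'_k(v)\cdot\nabla(v-w)$, use the pointwise Lipschitz bound $|\b{g}'_k(v)-\b{g}'_k(w)|\le\|\b{g}''_k\|_\infty|v-w|$, and sum over $k$ via Assumption~\ref{hyp:G}. In fact your pointwise handling of the first term directly yields the sharper quantity $\|\nabla w\,(v-w)\|_{\b L^2(\Td)}^2$ stated in the lemma, whereas the paper's displayed proof passes through the cruder H\"older bound $\|\nabla w\|_H^2\|v-w\|_{L^\infty(\Td)}^2$ before concluding local Lipschitz continuity.
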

	\begin{proof}
		Fix $R > 0$ and let $v,\,w \in \mathbf B^{0,\infty}_1 \cap \mathbf B^{1,2}_R$. Then
		\[
		\begin{split}
			\|\div\b{G}(v) - \div \b{G}(w)\|_{\cL^2(U, H_0)}^2&  = \sum_{k  = 1}^{+\infty} \|\left( \div\b{G}(v) - \div \b{G}(w) \right)[u_k]\|^2_H  \\
			& = \sum_{k  = 1}^{+\infty} \|\b{g}'_k(v)\cdot \nabla v - \b{g}'_k(w)\cdot \nabla w\|^2_H \\
			& = \sum_{k  = 1}^{+\infty} \|(\b{g}'_k(v)-\b{g}'_k(w)) \cdot \nabla w + \b{g}'_k(w) \cdot \nabla(v-w)\|^2_H \\
			& \leq 2\sum_{k  = 1}^{+\infty} \|\nabla w\|_H^2\| \b{g}'_k(v)-\b{g}'_k(w)\|^2_{\b{L}^\infty(\Td)}+ \|\b{g}'_k(w)\|_{\b{L}^\infty(\Td)}^2 \|\nabla(v-w)\|^2_H \\
			& \leq 2\sum_{k  = 1}^{+\infty} \|\nabla w\|_H^2\| \b{g}'_k(v)-\b{g}'_k(w)\|^2_{\b{L}^\infty(\Td)}+ \|\b{g}'_k(w)\|_{\b{L}^\infty(\Td)}^2 \|\nabla(v-w)\|^2_H \\
			& \leq 2R^2\sum_{k  = 1}^{+\infty} \|\b{g}''_k\|_{\b{L}^\infty(-1,1)}^2\|v-w\|_{L^\infty(\Td)}^2 +  2\sum_{k  = 1}^{+\infty} \|\b{g}'_k\|_{\b{L}^\infty(-1,1)}^2\|v-w\|_{V}^2 \\
			& \leq 2L_\b{G}^2(R^2+1)\max(\|v-w\|_{L^\infty(\Td)}^2,\,\|v-w\|_{V}^2),
		\end{split}
		\]
		and the conclusion follows.
	\end{proof} \noindent
	However, the natural extension of $\div \b{G}$ over the whole ball $\mathbf{B}^{0,\infty}_1$ is globally Lipschitz continuous with respect to the usual metric of $L^2(\Td)$.
	\begin{lem} \label{lem:divlip2}
		The operator $\div \b{G} : \mathbf{B}^{0,\infty}_1 \to \cL^2(U,V_0^*)$ is globally Lipschitz continuous with respect to the $H$-metric on $\mathbf{B}^{0,\infty}_1$. The Lipschitz constant is bounded above by $L_\b{G}$.
	\end{lem}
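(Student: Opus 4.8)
The plan is to read the estimate off directly from the variational definition of $\div\b{G}$, using the duality characterisation of the $V_0^*$-norm; this is precisely the reason, anticipated in the paragraph preceding the statement, for formulating the divergence operator in the weak setting. Fix $v,w\in\mathbf{B}^{0,\infty}_1$. For every $k\in\enne_+$ and every $\xi\in V_0$ the definition gives
\[
\langle(\div\b{G}(v)-\div\b{G}(w))[u_k],\xi\rangle_{V_0^*,V_0}=(\b{g}_k(v)-\b{g}_k(w),\nabla\xi)_H ,
\]
so that, by the Cauchy--Schwarz inequality and the $H^1$-seminorm structure of $V_0$ (under which $\nabla$ maps $V_0$ isometrically into $\bH$),
\[
\|(\div\b{G}(v)-\div\b{G}(w))[u_k]\|_{V_0^*}=\sup_{\|\nabla\xi\|_H\le 1}\big|(\b{g}_k(v)-\b{g}_k(w),\nabla\xi)_H\big|\le\|\b{g}_k(v)-\b{g}_k(w)\|_{\bH} .
\]

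Next I would exploit the regularity of the coefficients: each $\b{g}_k$ lies in $\b{W}^{2,\infty}(-1,1)\subset\b{W}^{1,\infty}(-1,1)$, hence extends to a Lipschitz map on the closed interval $[-1,1]$ with Lipschitz constant $\|\b{g}_k'\|_{\b{L}^\infty(-1,1)}$ (one-dimensional Sobolev embedding $W^{1,\infty}(-1,1)\embed C^{0,1}([-1,1])$, which also handles the endpoints $\pm1$, since $v,w$ take values in $[-1,1]$ almost everywhere). Consequently $|\b{g}_k(v(x))-\b{g}_k(w(x))|\le\|\b{g}_k'\|_{\b{L}^\infty(-1,1)}\,|v(x)-w(x)|$ for a.e.\ $x\in\Td$, and integrating over $\Td$ yields
\[
\|\b{g}_k(v)-\b{g}_k(w)\|_{\bH}\le\|\b{g}_k'\|_{\b{L}^\infty(-1,1)}\,\|v-w\|_H .
\]

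Finally I would square, sum over $k\in\enne_+$, and use $\|\b{g}_k'\|_{\b{L}^\infty(-1,1)}\le\|\b{g}_k\|_{\b{W}^{2,\infty}(-1,1)}$ together with the summability in Assumption \ref{hyp:G}:
\[
\|\div\b{G}(v)-\div\b{G}(w)\|_{\cL^2(U,V_0^*)}^2=\sum_{k=1}^{+\infty}\|(\div\b{G}(v)-\div\b{G}(w))[u_k]\|_{V_0^*}^2\le\Big(\sum_{k=1}^{+\infty}\|\b{g}_k\|_{\b{W}^{2,\infty}(-1,1)}^2\Big)\|v-w\|_H^2=L_\b{G}^2\,\|v-w\|_H^2 ,
\]
which is exactly the claimed bound, with Lipschitz constant $L_\b{G}$. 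There is no serious obstacle here: the estimate is a direct consequence of the weak formulation, and the only point deserving a line of care is that the pointwise Lipschitz bound for $\b{g}_k$ remains valid up to the endpoints $\pm1$ of the admissible range of $v$ and $w$, which follows at once from the embedding quoted above.
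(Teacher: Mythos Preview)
Your proof is correct and follows essentially the same approach as the paper: both use the duality characterisation of the $V_0^*$-norm to bound $\|(\div\b{G}(v)-\div\b{G}(w))[u_k]\|_{V_0^*}$ by $\|\b{g}_k(v)-\b{g}_k(w)\|_{\bH}$, then invoke the Lipschitz continuity of $\b{g}_k$ with constant $\|\b{g}_k'\|_{\b{L}^\infty(-1,1)}$ and sum over $k$. Your additional remark about the endpoints $\pm1$ is a legitimate point of care that the paper leaves implicit.
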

	\begin{proof}
		Fix any $v,\,w \in \mathbf{B}^{0,\infty}_1$. Then
		\[
		\begin{split}
			\|\div\b{G}(v) - \div \b{G}(w)\|_{\cL^2(U, V_0^*)}^2&
			= \sum_{k  = 1}^{+\infty}
			\sup_{\substack{\xi \in V_0 \\ \|\xi\|_{V_0}=1}}
			\left|
			(\b{g}_k(v)-\b{g}_k(w), \nabla \xi)_H\right|^2  \\
			& \leq \sum_{k  = 1}^{+\infty} \|\b{g}_k(v)-\b{g}_k(w)\|^2_H \\
			& \leq \sum_{k  = 1}^{+\infty} \|\b{g}'_k\|_{\b{L}^\infty(-1,1)}^2\|v-w\|_H^2 \\
			& \leq L_\b{G}^2\|v-w\|_H^2,
		\end{split}
		\]
		and the conclusion follows.
	\end{proof} \noindent
	Furthermore, observe that for any given $\psi \in\mathbf B^{0,\infty}_1 \cap H^2(\Td)$ we have that
	\[
	\nabla \div \b{g}_k(\psi) = \nabla(\b{g}'_k(\psi) \cdot \nabla \psi) = \b{g}''_k(\psi) |\nabla \psi|^2 + D^2\psi\,\b{g}'_k(\psi) \in \b{H}
	\]
	by the H\"{o}lder inequality in both two and three dimensions.
	This implies that the restriction of $\div \b G$ to $\mathbf B^{0,\infty}_1 \cap H^2(\Td)$ is well defined as an operator
	\[
	\div\b G: \mathbf B^{0,\infty}_1 \cap H^2(\Td) \to \cL^2(U, V_0)
	\]
	and the gradient process
	\[
	\nabla \div \b{G} :  \mathbf B^{0,\infty}_1 \cap H^2(\Td) \to \cL^2(U, \b{H})
	\]
	is also well defined by
	\[
	\nabla \div \b{G}(\psi)[u_k] := \nabla \div \b{g}_k(\psi)  = \b{g}''_k(\psi) |\nabla \psi|^2 + D^2\psi\,\b{g}'_k(\psi), \quad k \in \enne_+.
	\]
	\subsection{Main results}
	In order to prove the forthcoming results, it turns out to be useful to study the mixed problem \eqref{eq:chac} whose notion of strong solution is given by
	\begin{defin}[strong solution to the mixed problem] \label{def:solCH}
		Let $\alpha \geq 0$ and $\beta \in [0,1]$ with $\alpha+\beta>0$.
		Let $p \geq 2$ and assume that $\varphi_0$ satisfies
		\[
		\varphi_0 \in L^p(\Omega, \cF_0, \P; V), \qquad
		F(\varphi_0) \in L^\frac p2(\Omega, \cF_0, \P; L^1(\Td)), \qquad
		\exists\,\delta_0\in(0,1):\;|\overline{\varphi_0}|\leq1-\delta_0\;\;\P\text{-a.s.}
		\]
		A strong solution to the mixed conservative
		Cahn--Hilliard--Allen--Cahn problem
		\eqref{eq:chac} originating from the initial datum $\varphi_0$
		is a stochastic process $\varphi$ such that
		\begin{align}
			\label{phi}
			&\varphi \in L^p_\cP(\Omega; C^0([0,T]; H))\cap
			L^p_w(\Omega; L^\infty(0,T; V)) \cap
			L^p_\cP(\Omega; L^2(0,T; H^2(\Td)))\,,\\
			& |{\varphi}(\omega, x, t)| < 1
			\text{ for a.a.~}(\omega, x,t) \in \Omega \times \OO \times (0,T)\,, \\
			\label{mu}
			&\mu:=-\Delta\varphi+F'(\varphi)
			\in L^{\frac p2}_\cP(\Omega; L^2(0,T; H))\,,
			\qquad\alpha\mu\in L^{\frac p2}_\cP(\Omega; L^2(0,T; V))\,,
			\\
			\label{initial}
			&\varphi(0) = \varphi_0\,,
		\end{align}
		and
		\begin{align} \label{eq:weakCH_ps}
			\notag
			&( \varphi(t),\psi)_H +
			\int_0^t\!\int_{\Td}
			\left[
			\alpha\nabla  \mu(s)\cdot \nabla \psi +
			\beta(\mu(s)-\overline{\mu(s)})\psi\right]\,\d s\\
			&\qquad= ( \varphi(0),\psi)_{H} +
			\left(\int_0^t \div \b{G}( \varphi(s))\,\d  W(s), \psi\right)_{H}
		\end{align}
		for every $\psi\in V$, $t \in [0,T]$, $\P$-almost surely.
	\end{defin} \noindent
\begin{remark}
	Observe that, setting $\alpha = 1$ and $\beta = 0$ in \eqref{eq:chac}
	(resp.~\eqref{eq:weakCH_ps}), we recover the conservative
	Cahn--Hilliard problem \eqref{eq:ch} (resp.~its weak formulation).
	Viceversa, setting $\alpha = 0$ and $\beta = 1$ yields the conservative
	Allen--Cahn system \eqref{eq:ac}.
	That being said, for technical reasons
	we will cover initially the mixed problem with $\alpha>0$
	for any arbitrary $\beta\in[0,1]$, thus including as
	trivial corollary the analysis of the Cahn-Hilliard model.
	Then the analysis of the limit case $\alpha=0$ and $\beta>0$ will follow
	from a vanishing viscosity argument $\alpha\searrow0$ in the mixed problem.
\end{remark}\noindent
As the uniqueness of solutions is not trivial in the case $\alpha=0$,
we also include a notion of martingale solution to the Allen--Cahn problem \eqref{eq:ac}
corresponding to the choices $\alpha=0$ and $\beta=1$.
\begin{defin}[martingale solution to the Allen--Cahn problem]
	\label{def:martsolAC}
	Let $p \geq 2$ and let the initial condition $\varphi_0$ satisfy
	\begin{equation*}
		\label{eq:acphi0}
		\varphi_0 \in L^p(\Omega,\cF_0, \P; V), \qquad
		F(\varphi_0)\in L^{\frac p2}(\Omega,\cF_0, \P; L^1(\Td)),
		\qquad
	\exists\,\delta_0\in(0,1):\;|\overline{\varphi_0}|\leq1-\delta_0\;\;\P\text{-a.s.}
	\end{equation*}
	A martingale solution to the conserved Allen--Cahn
	problem \eqref{eq:ac} originating from the law
	of the initial datum $\varphi_0$ is a family
	\[
	\left(\widehat\Omega, \widehat\cF, (\widehat\cF_t)_{t\in[0,T]}, \widehat\P,
	\widehat W, \widehat\varphi\right)\,,
	\]
	where $(\widehat\Omega, \widehat\cF, (\widehat\cF_t)_{t\in[0,T]}, \widehat\P)$
	is a filtered
	probability space satisfying the usual conditions; $\widehat W$
	is a cylindrical Wiener processes on $U$; the process $\widehat \varphi$ satisfies
	\begin{align}
		\label{eq:acphi_hat}
		&\widehat\varphi \in L^p_\cP(\widehat\Omega; C^0([0,T]; H))\cap
		L^p_w(\widehat\Omega; L^\infty(0,T; V)) \cap
		L^p_\cP(\widehat\Omega; L^2(0,T; H^2(\Td)))\,,\\
		& |\widehat{\varphi}(\omega, x, t)| < 1 \text{ for a.a. }
		(\omega, x,t) \in \hom \times \OO \times (0,T)\,, \\
		\label{eq:acmu_hat}
		&\widehat\mu:=-\Delta\widehat\varphi+F'(\widehat\varphi) \in
		L^{\frac p2}_\cP(\widehat\Omega; L^2(0,T; H))\,,\\
		\label{eq:acinitial_hat}
		&\widehat \varphi(0)\laweq\varphi_0
		\quad\text{on } V;
	\end{align}
	and it holds that
	\begin{equation} \label{eq:weakAC}
		(\widehat \varphi(t),\psi)_H +
		\int_0^t\!\int_{\Td} (\widehat \mu(s)-\overline{\widehat \mu(s)})\psi\,\d s
		= (\widehat \varphi(0),\psi)_{H} +
		\left(\int_0^t \div \b{G}(\widehat \varphi(s))\,\d \widehat W(s), \psi\right)_{H}
	\end{equation}
	 for every $\psi\in V$, for all $t\in[0,T],$ $\P\text{-almost surely.}$
\end{defin} \noindent
First, let us state two a priori uniqueness results.
	\begin{thm}[uniqueness for the mixed problem] \label{thm:uniqueCH}
		Let $d \in \{1,2,3\}$, let $\alpha > 0$ and $\beta \in [0,1]$,
		let $p \geq 2$, and assume \ref{hyp:potential}--\ref{hyp:G}.
		For $i = 1,\,2$, consider two initial conditions $\varphi_{0i}$
		defined on the same stochastic basis $(\Omega,\cF,(\cF_t)_{t\in[0,T]},\P)$,
		complying with the hypotheses listed in Definition \ref{def:solCH},
		and such that $\overline{\varphi_{01}}=\overline{\varphi_{02}}$.
		Let $\varphi_i$ denote some strong solution of \eqref{eq:chac},
		originating from the initial datum $\varphi_{0i}$,
		in the sense of Definition~\ref{def:solCH}.
		Then, the following continuous dependence estimate holds
		\begin{align*}
			\|\varphi_1 - \varphi_2\|_{L^p_\cP(\Omega;
				C^0([0,T];V^*)) \cap L^p_\cP(\Omega; L^2(0,T;V))}
			\leq C \|\varphi_{01}-\varphi_{02}\|_{L^p(\Omega; V^*)}.
		\end{align*}
		In particular, the periodic stochastic mixed problem
		\eqref{eq:chac} (and the periodic stochastic Cahn--Hilliard
		problem \eqref{eq:ch}) admits at most one solution.
	\end{thm}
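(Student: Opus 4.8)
The plan is to take the difference of the two weak formulations \eqref{eq:weakCH_ps} and test it, after inverting the leading elliptic operator $A_{\alpha\beta}$, in the space $V_0^*$; the equivalent norm $\|\cdot\|_*$ from Proposition~\ref{prop:constants} is the natural quantity to estimate. Write $\varphi := \varphi_1 - \varphi_2$ and $\mu := \mu_1 - \mu_2$. Since $\overline{\varphi_{01}} = \overline{\varphi_{02}}$ and the noise is mass-conserving (the divergence structure kills the mean, as in the formal computation in the introduction), $\varphi(t)$ has zero spatial mean for all $t$, $\P$-a.s., so $\varphi$ is a genuine $V_0$-valued process and $\mathcal N_{\alpha\beta}\varphi$ makes sense. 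Applying the Itô formula in $V_0^*$ (with the equivalent square norm $\|\cdot\|_*^2$, i.e.\ pairing the equation with $2\mathcal N_{\alpha\beta}\varphi$) one gets, schematically,
\[
\d\|\varphi(t)\|_*^2 + 2\int_{\Td}\big[\alpha\nabla\mu\cdot\nabla\mathcal N_{\alpha\beta}\varphi + \beta(\mu-\overline{\mu})\mathcal N_{\alpha\beta}\varphi\big]\d x\,\d t
= 2\big(\div\b G(\varphi_1)-\div\b G(\varphi_2),\mathcal N_{\alpha\beta}\varphi\big)_{V_0^*}\,\d t + \text{(Itô correction)} + \text{(mart.)}.
\]
Now $\alpha\nabla\mu\cdot\nabla\mathcal N_{\alpha\beta}\varphi + \beta(\mu-\overline\mu)\mathcal N_{\alpha\beta}\varphi$, integrated over $\Td$, equals $\langle A_{\alpha\beta}\mathcal N_{\alpha\beta}\varphi,\mu\rangle_{V_0^*,V_0} = \langle\varphi,\mu\rangle$ (using $\overline{\varphi}=0$), which is exactly $\|\nabla\varphi\|_H^2 + (F'(\varphi_1)-F'(\varphi_2),\varphi)_H$. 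The convexity of $\Psi$ makes the singular part of the potential term nonnegative, while the regular part $R'$ contributes at worst $C_R\|\varphi\|_H^2$ by \ref{hyp:potential}; this is the standard monotonicity trick that lets the logarithmic potential be handled without control of $F''(\varphi_i)$.

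The key coercive term is thus $2\|\nabla\varphi\|_H^2$ on the left. On the right, the noise term and its Itô correction must be absorbed: since $\div\b G$ is globally $H$-to-$\cL^2(U,V_0^*)$ Lipschitz with constant $L_\b G$ (Lemma~\ref{lem:divlip2}), the correction $\|\div\b G(\varphi_1)-\div\b G(\varphi_2)\|_{\cL^2(U,V_0^*)}^2 \le L_\b G^2\|\varphi\|_H^2$, and by equivalence of norms $\|\varphi\|_H^2$ is controlled by an interpolation $\|\varphi\|_H^2 \le \eta\|\nabla\varphi\|_H^2 + C_\eta\|\varphi\|_*^2$ (Ehrling / Poincaré-type with the zero-mean constraint), so a small multiple of the dissipation is absorbed and the rest feeds a Gronwall term in $\|\varphi\|_*^2$. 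The cross term $2(\div\b G(\varphi_1)-\div\b G(\varphi_2),\mathcal N_{\alpha\beta}\varphi)_{V_0^*}$ is handled the same way or kept inside the martingale-plus-correction bookkeeping. After taking $\sup_{t}$, raising to the power $p/2$, and applying the Burkholder--Davis--Gundy inequality to the stochastic integral (whose quadratic variation is bounded by $\int_0^t\|\varphi\|_*^2\|\varphi\|_{\cL^2\text{-data}}^2\,\d s$), one more small-constant absorption and the stochastic Gronwall lemma yield
\[
\E\sup_{t\in[0,T]}\|\varphi(t)\|_*^p + \E\Big(\int_0^T\|\nabla\varphi\|_H^2\,\d s\Big)^{p/2} \le C\,\E\|\varphi_0\|_*^p,
\]
which is the claimed estimate after translating $\|\cdot\|_*$ back to $\|\cdot\|_{V_0^*}$ and noting $\|\varphi\|_{L^2(0,T;V)}^2 \lesssim \|\varphi\|_{L^\infty(0,T;V_0^*)}^2 + \|\nabla\varphi\|_{L^2(0,T;H)}^2$; uniqueness and the extension to \eqref{eq:ch} ($\alpha=1,\beta=0$) are immediate corollaries.

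The main obstacle I expect is \emph{justifying the Itô formula in $V_0^*$} rigorously: the solutions live in $L^2(0,T;H^2)\cap L^\infty(0,T;V)$ with $\varphi$-time-derivative only in a negative-order space, and $\mathcal N_{\alpha\beta}\varphi$ is the natural test function but is itself only as regular as $\varphi$ permits, so one needs a suitable infinite-dimensional Itô formula (e.g.\ the Krylov--Rozovskii / Pardoux variational framework, or the Gelfand-triple Itô formula of Liu--Röckner) applied to the pair $(\varphi, A_{\alpha\beta}^{-1}\varphi)$ in the triplet $V_0\embed H_0\embed V_0^*$, checking that all duality pairings appearing — in particular $\langle\varphi,\mu\rangle$ with $\mu\in L^{p/2}_\cP(\Omega;L^2(0,T;H))$ — are well defined and that the singular term $(\Psi'(\varphi_1)-\Psi'(\varphi_2),\varphi)_H$ is integrable (which follows from $|\varphi_i|<1$ a.e.\ and the $L^2(0,T;H)$ bound on $\mu_i$, hence on $F'(\varphi_i)$). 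The case distinction is mild here since $\alpha>0$ keeps $A_{\alpha\beta}$ uniformly elliptic; the genuinely delicate $\alpha=0$ situation is deferred to the Allen--Cahn analysis and is not needed for this theorem.
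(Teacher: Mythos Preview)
Your proposal is correct and follows essentially the same route as the paper: apply It\^o's formula to $\frac12\langle\varphi,\mathcal N_{\alpha\beta}\varphi\rangle$ (which is exactly ``pairing with $\mathcal N_{\alpha\beta}\varphi$'' and equals $\frac{\alpha}{2}\|\nabla\mathcal N_{\alpha\beta}\varphi\|_{\bH}^2+\frac{\beta}{2}\|\mathcal N_{\alpha\beta}\varphi\|_H^2$), use the identity $\langle\varphi,\mu\rangle=\|\nabla\varphi\|_H^2+(F'(\varphi_1)-F'(\varphi_2),\varphi)_H$ together with the monotonicity of $\Psi'$ and the interpolation $\|\varphi\|_H^2\le\delta\|\nabla\varphi\|_H^2+C_\delta\|\varphi\|_*^2$, control the It\^o correction via Lemma~\ref{lem:divlip2}, then raise to power $p/2$, apply BDG and Gronwall. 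The paper justifies the It\^o step by citing the twice-differentiable-functional version in Da~Prato--Zabczyk, which addresses precisely the obstacle you flagged.
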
 \noindent

	\begin{thm}[uniqueness for the Allen--Cahn problem] \label{thm:uniqueAC}
		Let $d = 1$, let $\alpha=0$ and $\beta\in(0,1]$, let $p \geq 2$, assume
		\ref{hyp:potential}--\ref{hyp:G}, and that
		$L_{\b G}^2<\frac12$.
		Then,
		for every initial condition $\varphi_{0}$
		defined on the stochastic basis $(\Omega,\cF,(\cF_t)_{t\in[0,T]},\P)$,
		complying with the hypotheses listed in Definition \ref{def:solCH},
		and for every pair of strong solutions $\varphi_i$ of \eqref{eq:chac},
		originating from the same initial datum $\varphi_{0}$,
		in the sense of Definition~\ref{def:solCH},
		the following uniqueness result holds:
		\[
		\P\left(\|\varphi_1(t)-\varphi_2(t)\|_H=0 \quad\forall\,t\in[0,T]\right) =1.
		\]
		In particular, the
		periodic stochastic Allen--Cahn problem
		\eqref{eq:ac} admits at most one solution
	\end{thm}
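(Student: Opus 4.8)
The plan is to run an $H$-norm energy estimate directly on the difference of two solutions, exploiting that in dimension one $V=H^1(\Td)$ embeds into $L^\infty(\Td)$, so that the difference $\varphi:=\varphi_1-\varphi_2$ remains bounded and we do not need to pass to the weak $V^*$-formulation used in Theorem \ref{thm:uniqueCH}. Since $\alpha=0$, both solutions have the same mean (it is conserved by the noise structure, as shown in the introduction), so $\varphi$ has zero mean and we may freely use the Poincaré inequality. Writing $\mu_i=-\Delta\varphi_i+F'(\varphi_i)$ and subtracting the two weak formulations \eqref{eq:weakAC}, we obtain for $\varphi$ an identity of the form $\d\varphi + \beta(\mu_1-\mu_2 - \overline{\mu_1-\mu_2})\,\d t = \div(\b G(\varphi_1)-\b G(\varphi_2))\,\d W$, with $\varphi(0)=0$.

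First I would apply Itô's formula for $\tfrac12\|\varphi(t)\|_H^2$ (rigorously justified since $\varphi\in L^2(0,T;H^2(\Td))$ and $\partial_t\varphi$ lies in the dual space; this is the standard Lions–Magenes/Krylov setting and the zero-mean projection removes the $\overline{\mu}$ term upon testing with $\varphi$). This yields
\[
\tfrac12\|\varphi(t)\|_H^2 + \beta\int_0^t(\mu_1-\mu_2,\varphi)_H\,\d s
= \tfrac12\int_0^t\|\div\b G(\varphi_1)-\div\b G(\varphi_2)\|_{\cL^2(U,V_0^*)}^2\,\d s + M(t),
\]
where $M$ is a local martingale. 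For the elliptic term I split $(\mu_1-\mu_2,\varphi)_H = \|\nabla\varphi\|_H^2 + (F'(\varphi_1)-F'(\varphi_2),\varphi)_H$; the singular part $\Psi$ is convex so it contributes a nonnegative term, while the regular part is controlled by $C_R\|\varphi\|_H^2$ using \ref{hyp:potential}. The crucial point is the noise term: I cannot use Lemma \ref{lem:divlip2} here because its $V_0^*$-bound would only give $L_{\b G}^2\|\varphi\|_H^2$, which is too weak — instead I need the sharper estimate that in this one-dimensional setting $\|\div\b G(\varphi_1)-\div\b G(\varphi_2)\|_{\cL^2(U,V_0^*)}^2$ can be estimated, after an integration by parts moving the gradient onto a test function and using $\b g_k(\varphi_1)-\b g_k(\varphi_2)=\int_0^1\b g_k'(\cdot)\,\varphi\,\d\lambda$, by something like $\|\varphi\|_H^2$ times $\sum_k\|\b g_k'\|_\infty^2$. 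Actually the key observation is that $\div\b G(\varphi_i)$ acts into $H_0$ (Lemma \ref{lem:divlip1}) with the strong representation $\b g_k'(\varphi_i)\cdot\nabla\varphi_i$, and one estimates $\|\b g_k'(\varphi_1)\nabla\varphi_1-\b g_k'(\varphi_2)\nabla\varphi_2\|_H$ against $\|\nabla\varphi\|_H$ plus $\|\b g_k''\|_\infty\|\nabla\varphi_2\|_{L^\infty}\|\varphi\|_H$ — but the $L^\infty$ norm of $\nabla\varphi_2$ is not controlled. So the correct route is to keep the $V_0^*$-valued interpretation but extract the leading-order piece: write the noise norm squared as $\sum_k\|\b g_k(\varphi_1)-\b g_k(\varphi_2)\|_H^2 \le \sum_k\|\b g_k'\|_\infty^2\|\varphi\|_H^2 = L_{\b G}^2\|\varphi\|_H^2$ — and here is where $L_{\b G}^2<\tfrac12$ enters, but this alone does not close the estimate against $\|\nabla\varphi\|_H^2$.

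The resolution, and the main obstacle I expect, is that one must instead test with $\mathcal N_{0\beta}$-type quantities or, more simply, recognize that the correct energy functional involves controlling $\|\varphi\|_H^2$ with the dissipation $\|\nabla\varphi\|_H^2$ available on the left: since $\|\varphi\|_H^2\le C_P^2\|\nabla\varphi\|_H^2$ by Poincaré (as $\varphi$ has zero mean), the noise contribution $L_{\b G}^2\|\varphi\|_H^2$ is absorbed into $2\beta\|\nabla\varphi\|_H^2$ precisely when $L_{\b G}^2 C_P^2<\tfrac12$-type conditions hold — I would check whether the paper's normalization makes $C_P=1$ on $\Td$ so that the stated hypothesis $L_{\b G}^2<\tfrac12$ suffices, possibly with the factor $\beta$ and the $\tfrac12$ from Itô accounted for. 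Granting the absorption, I obtain $\tfrac12\|\varphi(t)\|_H^2 \le C_R\int_0^t\|\varphi\|_H^2\,\d s + M(t)$ after dropping the nonnegative $\Psi$-term and the leftover positive multiple of $\|\nabla\varphi\|_H^2$. Then I would take a supremum in time, apply the Burkholder–Davis–Gundy inequality to control $\E\sup_{s\le t}|M(s)|$ by $\tfrac14\E\sup_{s\le t}\|\varphi(s)\|_H^2$ plus a Gronwall-compatible term, and conclude by Gronwall's lemma that $\E\sup_{[0,T]}\|\varphi\|_H^2=0$, hence the pathwise statement $\P(\|\varphi_1(t)-\varphi_2(t)\|_H=0\ \forall t)=1$. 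A localization/stopping-time argument handles the a priori finiteness of the moments needed for BDG. The delicate accounting of constants to verify that exactly $L_{\b G}^2<\tfrac12$ is the right threshold — balancing the Itô factor $\tfrac12$, the coefficient $\beta\le 1$, and the Poincaré constant — is the step I expect to require the most care.
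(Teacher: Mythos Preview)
Your proposal has a genuine gap at the It\^o correction term. When you apply It\^o's formula to $\tfrac12\|\varphi\|_H^2$, the trace term is $\tfrac12\|\div\b G(\varphi_1)-\div\b G(\varphi_2)\|_{\cL^2(U,H)}^2$, not the $\cL^2(U,V_0^*)$-norm you wrote. The diffusion does take values in $H_0$, but the second-order correction for the $H$-functional is computed in $H$. This single error derails the rest of your argument: the Lipschitz bound $\sum_k\|\b g_k(\varphi_1)-\b g_k(\varphi_2)\|_H^2\le L_{\b G}^2\|\varphi\|_H^2$ of Lemma~\ref{lem:divlip2} controls only the $\cL^2(U,V_0^*)$-norm and is useless here, so the Poincar\'e absorption you propose does not close the estimate (and would in any case produce a $\beta$- and $C_P$-dependent threshold rather than the clean $L_{\b G}^2<\tfrac12$).

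The correct route is precisely the one you identified and then discarded. Lemma~\ref{lem:divlip1} gives
\[
\|\div\b G(\varphi_1)-\div\b G(\varphi_2)\|_{\cL^2(U,H)}^2
\le 2L_{\b G}^2\bigl(\|\nabla\varphi_2\|_{\b L^\infty(\T^1)}^2\|\varphi\|_H^2+\|\nabla\varphi\|_\bH^2\bigr),
\]
and your claim that $\|\nabla\varphi_2\|_{L^\infty}$ is ``not controlled'' is exactly where $d=1$ enters: since $\varphi_2\in L^2(0,T;H^2(\T^1))$ and $H^2(\T^1)\hookrightarrow W^{1,\infty}(\T^1)$, the map $\tau\mapsto\|\nabla\varphi_2(\tau)\|_{L^\infty}^2$ lies in $L^1(0,T)$ almost surely. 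The paper handles this random, time-unbounded coefficient not by BDG but by applying It\^o to the weighted functional $\tfrac12 e^{-\Xi(t)}\|\varphi(t)\|_H^2$ with $\Xi(t):=\sigma\int_0^t\|\nabla\varphi_2(\tau)\|_{L^\infty}^2\,\d\tau$; the extra drift this produces on the left exactly absorbs the $\|\nabla\varphi_2\|_{L^\infty}^2\|\varphi\|_H^2$ piece of the noise estimate for $\sigma=4L_{\b G}^2$. The threshold $L_{\b G}^2<\tfrac12$ then arises not from Poincar\'e but from absorbing the remaining $2L_{\b G}^2\|\nabla\varphi\|_\bH^2$ into the dissipation $\|\nabla\varphi\|_\bH^2$, leaving coefficient $1-2L_{\b G}^2>0$. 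Taking expectations (the weight is $\le 1$, so the martingale has zero mean), Gronwall gives $\varphi=0$ a.e.; re-reading the inequality pathwise then yields the $\sup_t$ statement.
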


	\begin{remark}
		Proving uniqueness of martingale solutions to the Allen--Cahn problem in a dimension greater than one seems challenging due to the $L^2$-gradient flow structure of the drift part of the system and the fact that, in order to exploit the Lipschitz-continuity given by Lemma \ref{lem:divlip1}, a higher-order control on the difference of two solutions is needed.
\end{remark} \noindent
The main results about existence of solutions are presented hereafter.
	\begin{thm}[existence of solutions to the Cahn--Hilliard problem] \label{thm:weakCH}
		Let $d \in \{1, 2,3\}$, let $p \geq 2$, and assume
		\ref{hyp:potential}--\ref{hyp:G}.
		Then, for any $\alpha > 0$ and $\beta \in [0,1]$,
		there exists a strong solution to the periodic stochastic
		mixed problem with conservative noise \eqref{eq:chac}.
		In particular, there exists a strong solution to the conservative
		Cahn--Hilliard problem \eqref{eq:ch}.
	\end{thm}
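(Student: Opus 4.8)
The plan is to construct a solution by a multi-layer approximation: first regularize the singular potential, then truncate the nonlinearities so that Lipschitz/Galerkin methods apply, solve the approximate problems, and finally pass to the limit using a priori estimates and stochastic compactness. Concretely, for $\lambda > 0$ let $\Psi_\lambda$ be the Moreau--Yosida approximation of $\Psi$ (so $\Psi_\lambda \in C^{1,1}(\R)$ with $\Psi_\lambda' $ Lipschitz and $\Psi_\lambda \nearrow \Psi$), set $F_\lambda = \Psi_\lambda + R$, and consider the regularized mixed equation with $F_\lambda'$ in place of $F'$. Since $\b{G}$ is only defined on $\mathbf{B}^{0,\infty}_1$, I would also extend/truncate $\b{G}$ outside the ball — e.g.\ compose with a smooth cutoff of $\varphi$ onto $[-1,1]$ — obtaining a globally Lipschitz coefficient $\b{G}_\lambda : H \to \cL^2(U,\b{H})$ in the $H$-metric by Lemma \ref{lem:divlip2}; the a priori estimates below will ultimately confine $\varphi$ to $(-1,1)$, so this truncation is invisible in the limit. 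Existence for the regularized problem is obtained by a Faedo--Galerkin scheme in the eigenbasis of $-\Delta$ on $\Td$ (or of $A_{\alpha\beta}$), where the finite-dimensional SDE system has locally Lipschitz coefficients with the a priori bound preventing blow-up; this gives approximate solutions $\varphi_{\lambda,n}$.

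The heart of the argument is the uniform (in $\lambda$, $n$) a priori estimates. The basic energy estimate comes from applying Itô's formula to $\mathcal{E}_\lambda(\varphi) := \tfrac12\|\nabla\varphi\|_H^2 + \int_{\Td} F_\lambda(\varphi)$: testing the equation formally against $\mu_\lambda = -\Delta\varphi + F_\lambda'(\varphi)$ produces the dissipation term $\alpha\|\nabla\mu_\lambda\|_H^2 + \beta\|\mu_\lambda - \overline{\mu_\lambda}\|_H^2$ on the left, while the Itô correction generates $\tfrac12 \sum_k \|\nabla \div \b{g}_k(\varphi)\|_\bH^2$, which is controlled using the explicit formula $\nabla\div\b{g}_k(\psi) = \b{g}_k''(\psi)|\nabla\psi|^2 + D^2\psi\,\b{g}_k'(\psi)$ together with assumption \ref{hyp:G}: this is bounded by $C L_\b{G}^2(\|\varphi\|_{H^2}\|\nabla\varphi\|_{L^4}^2 + \|\varphi\|_{H^2}^2)$, and after using $\|\nabla\varphi\|_{L^4}^2 \lesssim \|\varphi\|_{H^2}\|\nabla\varphi\|_H$ (Gagliardo--Nirenberg in $d\le 3$) one absorbs the top-order piece into the dissipation — here the structure helps because the noise is one derivative below the leading $-\Delta\mu$ term. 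A crucial separate point is controlling the mean $\overline{\varphi} \equiv \overline{\varphi_0}$, which is conserved exactly by the conservative structure (the stochastic Fubini computation in the Introduction), so $|\overline{\varphi_0}| \le 1 - \delta_0$ survives; combined with the energy bound and the standard argument of Miranville--Zelik type one gets $\|F_\lambda'(\varphi)\|_{L^1(\Td)}$ and hence, after elliptic regularity applied to $-\Delta\varphi + \Psi_\lambda'(\varphi) = \mu_\lambda - R'(\varphi)$ (testing against $\Psi_\lambda'(\varphi)$, exploiting monotonicity), a bound on $\|\Psi_\lambda'(\varphi)\|_{L^2(0,T;H)}$ and thus on $\|\varphi\|_{L^2(0,T;H^2)}$. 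Taking $p/2$-th powers and using the Burkholder--Davis--Gundy inequality upgrades these to the $L^p_\cP(\Omega;\cdot)$ bounds in \eqref{phi}--\eqref{mu}. A further time-regularity estimate on $\varphi$ (a fractional Sobolev or Hölder bound in a negative-order space, from the equation) is needed for compactness.

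For the passage to the limit I would first let $n\to\infty$ at fixed $\lambda$ — this can be done on the original probability space if one has uniqueness for the regularized problem (available since $F_\lambda'$ is Lipschitz, arguing as in Theorem \ref{thm:uniqueCH}), or otherwise via martingale compactness — obtaining a solution $\varphi_\lambda$ of the $\lambda$-regularized problem. Then let $\lambda \to 0$: the uniform estimates give tightness of the laws of $\varphi_\lambda$ in, say, $C^0([0,T];V^*) \cap L^2(0,T;V) \cap C^0_w([0,T];H)$ (Aubin--Lions--Simon plus the time-regularity bound), so by Prokhorov and the Skorokhod--Jakubowski representation theorem one extracts a new probability space with a.s.-convergent copies. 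The potential term is handled by the standard monotonicity/lower-semicontinuity trick: $\Psi_\lambda'(\varphi_\lambda) \rightharpoonup \eta$ weakly in $L^2$, and one identifies $\eta = \Psi'(\varphi)$ via the maximal monotone graph argument, which simultaneously forces $|\varphi| < 1$ a.e.; the nonlinear noise term passes to the limit by the a.s.\ strong convergence in $L^2(0,T;V)$ together with the $H$-Lipschitz bound on $\div\b{G}$ (Lemma \ref{lem:divlip2}) and a stochastic-integral convergence lemma. Finally, because Theorem \ref{thm:uniqueCH} gives pathwise uniqueness for the (un-regularized) mixed problem with $\alpha > 0$, the Gyöngy--Krylov characterization promotes the martingale solution to a probabilistically-strong solution on the original basis, completing the proof; setting $\alpha = 1$, $\beta = 0$ yields \eqref{eq:ch}.

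The main obstacle I expect is closing the a priori energy estimate: the Itô correction from the conservative noise contains the genuinely top-order quantity $\sum_k \b{g}_k''(\varphi)|\nabla\varphi|^2$, and showing it can be absorbed into $\alpha\|\nabla\mu\|_H^2$ (for $\alpha>0$) — rather than merely into a Gronwall-type term — requires care with the Gagliardo--Nirenberg interpolation constants in dimension three and is exactly where the restriction $\alpha>0$ (as opposed to the Allen--Cahn case) becomes essential; a secondary technical difficulty is making the formal Itô applications rigorous, which is why the Galerkin layer and the $\lambda$-regularization of $\Psi$ (ensuring $\mathcal{E}_\lambda \in C^2$) are both needed.
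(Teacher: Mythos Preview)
Your overall architecture matches the paper's: Yosida-regularize $\Psi$, regularize the noise, solve the approximate problem, derive uniform estimates (energy inequality plus a Miranville--Zelik control of $\overline{\mu}$), and pass to the limit via tightness and Gy\"ongy--Krylov. The genuine gap is in closing the energy estimate. The Gagliardo--Nirenberg bound $\|\nabla\varphi\|_{L^4}^2\lesssim\|\varphi\|_{H^2}\|\nabla\varphi\|_H$ you quote fails in $d=3$ (one only has $\|\nabla\varphi\|_{L^4}^2\lesssim\|\nabla\varphi\|_H^{1/2}\|\varphi\|_{H^2}^{3/2}$), and even after the integration-by-parts trick the paper uses (exploiting $|J_\lambda(\varphi)|\leq1$), the It\^o correction leaves, beside an absorbable $\varepsilon\|\varphi\|_{W^{2,6}}^2$ piece, a residual $\|\varphi\|_{H^2(\Td)}^2(1+\|\nabla\varphi\|_\bH^2)$ (cf.\ \eqref{eq:unif53}). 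This cannot be absorbed into $\alpha\|\nabla\mu\|_\bH^2$ nor closed by a plain Gronwall once one takes $L^{p/2}(\Omega)$-moments, because the supremum-in-time of $\|\nabla\varphi\|_\bH^2$ and the time integral of $\|\varphi\|_{H^2}^2$ become entangled under expectation. The paper's remedy is to apply It\^o to the weighted energy $e^{-\Lambda(t)}\mathcal{E}_\lambda(\varphi(t))$ with $\Lambda(t)=\gamma\int_0^t\|\varphi(\tau)\|_{H^2}^2\,\d\tau$, so the derivative of the weight cancels the bad term; removing the weight afterwards requires $\E e^{q\Lambda(T)}<\infty$, proved by a separate supermartingale estimate on $\|\varphi\|_H^2$, which in turn forces temporary extra hypotheses (exponential moments of $\varphi_0$, and a compatibility condition $F''|\b{g}_k'|^2\in L^\infty(-1,1)$ taming the second It\^o correction $\sum_k\int_{\Td} F_\lambda''(\varphi)|\b{g}_k'\cdot\nabla\varphi|^2$). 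This pipeline only delivers $\mu\in L^{2-}(\Omega;L^2(0,T;V))$, and a final approximation layer is needed to drop the extra hypotheses and recover the $p/2$-moments. You correctly flagged this step as the obstacle, but the interpolation you sketch does not close.

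A secondary difference is the noise regularization. The paper uses $K_{\lambda,\xi}(\psi)[u_k]=\b{g}_k'(J_\lambda(\psi))\cdot\nabla\mathcal{R}_\xi\psi$ with $J_\lambda=(I+\lambda\Psi')^{-1}$ and $\mathcal{R}_\xi=(I-\xi\Delta)^{-1}$: the elliptic resolvent on the gradient is what buys the $H$-Lipschitz bound of Lemma~\ref{lem:lipschitz}, needed for the Liu--R\"ockner locally-monotone existence theory (which replaces your Galerkin layer). The price is that mass is not conserved exactly at the approximate level; the paper shows instead $\E\sup_t|\overline{\varphi_{\lambda,\xi}(t)}-\overline{\varphi_0}|^p\lesssim\lambda^{p/2}$, and this rate is precisely what the Miranville--Zelik argument consumes. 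Your cutoff-based scheme would keep mass exactly and a Galerkin route is in principle viable, but note that Lemma~\ref{lem:divlip2} gives Lipschitz continuity of $\div\b{G}$ only into $\cL^2(U,V_0^*)$, not into $\cL^2(U,H)$ as you wrote.
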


\begin{thm}[existence of solutions to the Allen--Cahn problem] \label{thm:weakAC}
	Let $d \in \{1, 2, 3\}$, let $\alpha=0$,
	let $p \geq 2$, and assume \ref{hyp:potential}--\ref{hyp:G}.
	Then, there exists a constant $C_0>0$,
	only depending on the structural
	data, such that if $L_{\b G}^2<C_0$ then
	there exists a martingale solution to the periodic stochastic
	Allen--Cahn problem with conservative noise \eqref{eq:ac}.
	If also $d = 1$ and $L_{\b G}^2<\min\{\frac12,C_0\}$,
	then the solution is unique and probabilistically-strong.
\end{thm}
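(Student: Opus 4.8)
\emph{Strategy.} The plan is to realise the conserved Allen--Cahn problem \eqref{eq:ac} as a vanishing-viscosity limit of the mixed problem \eqref{eq:chac}. Fix $\beta=1$ and a sequence $\alpha_n\searrow0$; by Theorem~\ref{thm:weakCH}, for each $n$ there is a probabilistically-strong solution $\varphi_n$ of \eqref{eq:chac} with $(\alpha,\beta)=(\alpha_n,1)$, defined on the original basis, with chemical potential $\mu_n=-\Delta\varphi_n+F'(\varphi_n)$, enjoying the almost-sure mass conservation $\overline{\varphi_n(t)}=\overline{\varphi_0}$ and the regularity of Definition~\ref{def:solCH}. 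One then derives a priori estimates uniform in $n$ (this is where the smallness $L_{\b G}^2<C_0$ enters), passes to the limit by stochastic compactness to obtain a martingale solution, and, in dimension one, upgrades it to a probabilistically-strong solution using the pathwise uniqueness of Theorem~\ref{thm:uniqueAC}.

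\emph{Uniform estimates.} I would apply It\^{o}'s formula to the energy $\mathcal E(\varphi_n)=\tfrac12\|\nabla\varphi_n\|_\bH^2+\int_{\Td}F(\varphi_n)$, using $\mathcal E'(\varphi_n)=\mu_n$, to get
\[
\d\mathcal E(\varphi_n)+\bigl(\alpha_n\|\nabla\mu_n\|_\bH^2+\|\mu_n-\overline{\mu_n}\|_H^2\bigr)\,\d t=\tfrac12\sum_{k}\Bigl(\|\nabla\div\b{g}_k(\varphi_n)\|_\bH^2+\int_{\Td}F''(\varphi_n)|\div\b{g}_k(\varphi_n)|^2\,\d x\Bigr)\,\d t+\d M_n,
\]
with $M_n$ a local martingale. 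Since $\nabla\div\b{g}_k(\varphi_n)=\b{g}_k''(\varphi_n)|\nabla\varphi_n|^2+D^2\varphi_n\,\b{g}_k'(\varphi_n)$ and $\|D^2\varphi_n\|_\bH$ is comparable to $\|\Delta\varphi_n\|_H$ on the torus, the genuinely second-order contribution is bounded by $\tfrac12 L_{\b G}^2\|\Delta\varphi_n\|_H^2$; writing $\Delta\varphi_n=-(\mu_n-\overline{\mu_n})+(F'(\varphi_n)-\overline{F'(\varphi_n)})$ and controlling $F'(\varphi_n)$ in $L^2$ by the classical singular-potential estimate (test the $\mu$-equation with $\Psi'(\varphi_n)$ and use $|\overline{\varphi_0}|\le1-\delta_0$, à la Miranville--Zelik), this term is absorbed into $\|\mu_n-\overline{\mu_n}\|_H^2$ on the left provided $L_{\b G}^2<C_0$ for a structural constant $C_0$; the $F''(\varphi_n)$-correction is handled similarly via $\int_{\Td}\Psi''(\varphi_n)|\nabla\varphi_n|^2\le\|\Psi'(\varphi_n)\|_H\|\Delta\varphi_n\|_H$ and $|R''|\le C_R$. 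With Burkholder--Davis--Gundy and Gronwall this yields, uniformly in $n$, bounds on $\varphi_n$ in $L^p_w(\Omega;L^\infty(0,T;V))\cap L^{p/2}_\cP(\Omega;L^2(0,T;H^2(\Td)))$, on $\mu_n-\overline{\mu_n}$ and $F'(\varphi_n)$ in $L^{p/2}_\cP(\Omega;L^2(0,T;H))$, and on $\sqrt{\alpha_n}\,\mu_n$ in $L^{p/2}_\cP(\Omega;L^2(0,T;V))$; from the equation and Lemma~\ref{lem:divlip2} one also gets uniform bounds on the drift part of $\partial_t\varphi_n$ in $L^{p/2}(\Omega;L^2(0,T;V^*))$ and a uniform fractional-in-time Sobolev bound on the stochastic term, with values in $V^*$.

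\emph{Compactness and passage to the limit.} By Aubin--Lions--Simon together with the compact embeddings $V\embed H\embed V^*$ and $H^2(\Td)\embed V$ and the stochastic time-regularity, the laws of $(\varphi_n,W)$ are tight on $\cZ\times C^0([0,T];U_0)$, where $\cZ:=C^0([0,T];V^*)\cap L^2(0,T;V)$ further intersected with $L^2(0,T;H^2(\Td))$ and $L^\infty(0,T;V)$ endowed with their weak and weak-$*$ topologies. The Jakubowski--Skorokhod theorem gives a new basis $(\hom,\hF,\hP)$ and variables $(\hphi_n,\widehat W_n)$ with the same laws, converging $\hP$-a.s.\ in $\cZ\times C^0([0,T];U_0)$ to $(\hphi,\widehat W)$; then $\widehat W$ is a cylindrical Wiener process for the natural filtration, the a priori bounds pass to $\hphi_n$, interpolating the $C^0([0,T];V^*)$ convergence against the $L^\infty(0,T;V)$ bound gives $\hphi_n\to\hphi$ in $C^0([0,T];H)$ $\hP$-a.s., while weak lower semicontinuity of $\int_{\Td}\Psi(\cdot)$ and the $F(\varphi_n)$ bound force $|\hphi|<1$ a.e.\ and Fatou's lemma transfers the integrability in \eqref{eq:acphi_hat}--\eqref{eq:acmu_hat}. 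Then $\alpha_n\nabla\hmu_n=\sqrt{\alpha_n}\,(\sqrt{\alpha_n}\,\nabla\hmu_n)\to0$, so the viscous term vanishes; $-\Delta\hphi_n\rightharpoonup-\Delta\hphi$ in $L^2(0,T;H)$; continuity of $F'$ on $(-1,1)$, the a.e.\ convergence, $|\hphi|<1$ a.e.\ and the uniform bound on $F'(\hphi_n)$ yield $F'(\hphi_n)\rightharpoonup F'(\hphi)$ by a Vitali-type argument, hence $\hmu_n-\overline{\hmu_n}\rightharpoonup\hmu-\overline{\hmu}$; finally, $\hphi_n\to\hphi$ in $C^0([0,T];H)$ and the global Lipschitz continuity of $\div\b{G}$ from $(\mathbf B^{0,\infty}_1,\|\cdot\|_H)$ to $\cL^2(U,V_0^*)$ (Lemma~\ref{lem:divlip2}), together with $\widehat W_n\to\widehat W$, let the standard convergence lemma for stochastic integrals give $\int_0^\cdot\div\b{G}(\hphi_n)\,\d\widehat W_n\to\int_0^\cdot\div\b{G}(\hphi)\,\d\widehat W$ in $C^0([0,T];V_0^*)$ in probability. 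Passing to the limit in the weak formulation \eqref{eq:weakCH_ps} of \eqref{eq:chac} against $\psi\in V$ produces \eqref{eq:weakAC}, so $(\hom,\hF,(\hF_t),\hP,\widehat W,\hphi)$ is a martingale solution of \eqref{eq:ac}.

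\emph{Dimension one and the main obstacle.} If $d=1$ and $L_{\b G}^2<\min\{\tfrac12,C_0\}$, Theorem~\ref{thm:uniqueAC} provides pathwise uniqueness for \eqref{eq:ac}; since the viscous approximants $\varphi_n$ already live on the original basis, the Gy\"{o}ngy--Krylov characterisation of convergence in probability (equivalently, the Yamada--Watanabe principle) then forces $\varphi_n$ to converge in probability to a limit which is a probabilistically-strong solution, necessarily unique. The main obstacle throughout is the uniform-in-$\alpha_n$ $L^2(0,T;H^2)$ estimate: unlike in the Cahn--Hilliard case, the It\^{o} correction carries the second-order term $D^2\varphi_n\,\b{g}_k'(\varphi_n)$, of the same differential order as $\Delta\varphi_n$, and making it absorbable while simultaneously controlling the singular part $F'(\varphi_n)$ (whose $L^2$-norm also enters $\|\Delta\varphi_n\|_H$) is exactly what pins down the smallness threshold $C_0$; a secondary difficulty is the identification of the limit of $F'(\hphi_n)$, which hinges on the a.e.\ separation $|\hphi|<1$ and equi-integrability.
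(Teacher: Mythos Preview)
Your proposal is correct and follows essentially the same route as the paper: vanishing viscosity $\alpha\searrow0$ in the mixed problem, an energy It\^{o} estimate in which the It\^{o} correction $\sum_k\|\nabla\div\b{g}_k(\varphi_n)\|_\bH^2\le CL_{\b G}^2\|\varphi_n\|_{H^2}^2$ is absorbed into $\|\mu_n-\overline{\mu_n}\|_H^2$ under the smallness $L_{\b G}^2<C_0$, stochastic compactness via Skorokhod-type theorems, and the upgrade to a strong solution in $d=1$ through Theorem~\ref{thm:uniqueAC} and Gy\"{o}ngy--Krylov. The only notable technical reordering is that the paper first performs an It\^{o} estimate on $\|\varphi_\alpha\|_H^2$ to obtain the bound on $\sqrt{\Psi''(\varphi_\alpha)}\,\nabla\varphi_\alpha$ (used to control the $F''$ correction) and then bounds $\|\Delta\varphi_\alpha\|_H^2\le\|\mu_\alpha-\overline{\mu_\alpha}\|_H^2+C\|\nabla\varphi_\alpha\|_\bH^2$ by testing the $\mu$-equation with $-\Delta\varphi_\alpha$ and dropping the convex term, whereas you obtain the same closure directly via $\|\Psi'(\varphi_n)\|_H\le C(1+\|\mu_n-\overline{\mu_n}\|_H)$; both are equivalent ways to pin down the structural constant $C_0$.
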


\section{Proof of Theorems \ref{thm:uniqueCH} and \ref{thm:uniqueAC}} \label{sec:unique}
\subsection{The mixed problem.}
Here, we prove an a priori continuous dependence estimate for \eqref{eq:chac}.
Let $\alpha > 0$ and $\beta \in [0,1]$ be arbitrary but fixed. Observe that, as a byproduct by setting $\alpha = 1$ and $\beta = 0$, we have that problem \eqref{eq:ch} admits at most one solution.
 Let $\varphi_{01}$ and $\varphi_{02}$ be two initial states satisfying the assumptions listed in Definition \ref{def:solCH} and such that $\overline{\varphi_{01}} = \overline{\varphi_{02}}$, and consider two different solutions to the stochastic mixed problem \eqref{eq:chac}, labeled with the index $i \in \{1,2\}$, that is
\begin{equation*}
	\begin{cases}
		\d \varphi_i +
		A_{\alpha\beta} \mu_i\,  \d t
		= \div \left( \b{G}(\varphi_i)\right) \d W & \quad \text{ in }\T^d \times (0,T) \\
		\mu_i = -\Delta \varphi_i + F'(\varphi_i) & \quad \text{ in }\T^d \times (0,T) \\
		\varphi_i(\cdot \:, 0) = \varphi_{0i} & \quad \text{ in } \Td.
	\end{cases}
\end{equation*}
Define then
\begin{align*}
	\varphi & := \varphi_1 - \varphi_2, \\
	\mu & := \mu_1 - \mu_2, \\
	\varphi_{0} & := \varphi_{01}-\varphi_{02}.
\end{align*}
By testing the equation for $\varphi$ by $1$, recalling that $\overline{\varphi_0}=0$ by assumption,
it is immediate to see that $\overline{\varphi(t)}=0$ for all $t\in[0,T]$.
Hence, if we apply the It\^{o} lemma in its version for twice differentiable functionals (see \cite[Theorem 4.32]{dapratozab}) to the functional
\[
\varphi \mapsto \dfrac{\alpha}{2}\|\nabla \mathcal{N}_{\alpha \beta}\varphi\|_{\bH}^2 + \dfrac{\beta}{2}\|\mathcal{N}_{\alpha \beta}\varphi\|^2_H
=\frac12\langle \varphi, \mathcal{N}_{\alpha \beta}\varphi\rangle,
\]
recalling that $\mathcal{N}_{\alpha \beta}=A_{\alpha\beta}^{-1}$ we obtain
\begin{align}
	\nonumber
	&\dfrac{\alpha}{2}\|\nabla \mathcal{N}_{\alpha \beta}\varphi(t)\|_{\bH}^2  + \dfrac{\beta}{2}\|\mathcal{N}_{\alpha \beta}\varphi(t)\|^2_H
	+\int_0^t \left[ \|\nabla \varphi(\tau)\|^2_H
	+ \left( \varphi(\tau), F'(\varphi_{1}(\tau))
	-F'(\varphi_{2}(\tau))\right)_H \right] \: \mathrm{d}\tau \\
	\nonumber
	&\qquad= \dfrac{\alpha}{2}\|\nabla \mathcal{N}_{\alpha \beta}\varphi_0 \|_{\bH}^2   + \dfrac{\beta}{2}\|\mathcal{N}_{\alpha \beta}\varphi_0\|^2_H
	+ \int_0^t \left(\varphi(\tau),\, \mathcal{N}_{\alpha\beta}[\div \b{G}(\varphi_{1}(\tau))
	- \div \b{G}(\varphi_{2}(\tau))]\,\mathrm{d}W(\tau)\right)_H  \\
	&\qquad\qquad+ \dfrac{\alpha}{2}\int_0^t
	\|\nabla \mathcal{N}_{\alpha\beta}( \div \b{G}(\varphi_{1}(\tau))
	- \div \b{G}(\varphi_{2}(\tau)))\|^2_{\cL^2(U, H)} \: \mathrm{d}\tau\\
	&\qquad\qquad+
	\dfrac{\beta}{2}\int_0^t
	\| \mathcal{N}_{\alpha\beta}( \div \b{G}(\varphi_{1}(\tau))
	- \div \b{G}(\varphi_{2}(\tau)))\|^2_{\cL^2(U, H)} \: \mathrm{d}\tau
	\label{eq:uniqch1}
\end{align}
Observe that for all $\tau\in[0,T]$, by \ref{hyp:G} and interpolation,
\begin{align}
	\nonumber
	\left( \varphi(\tau),
	F'(\varphi_{1}(\tau)) -F'(\varphi_{2}(\tau))\right)_H
	& \geq \left( \varphi(\tau),
	R'(\varphi_{1}(\tau)) -R'(\varphi_{2}(\tau))\right)_H\\
	\nonumber
	&\geq -C_R\|\varphi(\tau)\|_H^2 \\
	& \geq -\delta_1\|\nabla \varphi(\tau)\|^2_\bH - C\|\varphi(\tau)\|^2_*,
	\label{eq:uniqch2}
\end{align}
where $\delta_1 > 0$ is arbitrary and $C$ depends on $\delta_1$. Moreover, we have by Lemma \ref{lem:divlip2}
\begin{align}
	\nonumber
	&\|\nabla \mathcal{N}_{\alpha\beta}( \div \b{G}(\varphi_{1}(\tau))
	- \div \b{G}(\varphi_{2}(\tau)))\|^2_{\cL^2(U, \bH)}
	+\|\mathcal{N}_{\alpha\beta}( \div \b{G}(\varphi_{1}(\tau))
	- \div \b{G}(\varphi_{2}(\tau)))\|^2_{\cL^2(U, H)}\\
	\nonumber
	&\qquad \leq  C\|\div \b{G}(\varphi_{1}(\tau))- \div \b{G}(\varphi_{2}(\tau))\|^2_{\cL^2(U,V_0^*)} \\
	\nonumber
	& \qquad\leq C\|\varphi(\tau)\|^2_H \\
	& \qquad\leq \delta_1\|\nabla\varphi(\tau)\|^2_\bH + C\|\varphi(\tau)\|^2_*.
	\label{eq:uniqch3}
\end{align}
Fix $\delta_1 = \frac 14$. Let us exploit \eqref{eq:uniqch2} and \eqref{eq:uniqch3} in \eqref{eq:uniqch1}, also recalling that $\|\varphi(t)\|_*=\|\varphi(t)\|_\sharp$
for all $t\in[0,T]$:
raising the result to the power $\frac p2$, then taking suprema and expectations yields
\begin{align*}
	&\E \supp \|\varphi(\tau)\|_{\sharp}^p
	+\E \left| \int_0^t  \|\nabla \varphi(\tau)\|^2_H \: \mathrm{d}\tau \right|^\frac p2  \\
	&\qquad\leq C \bigg[ \E\|\varphi_0\|_{\sharp}^p
	+ \E \left| \int_0^t \|\varphi(\tau)\|^2_\sharp \: \mathrm{d}\tau \right|^\frac p2\\
	&\qquad\qquad+ \E \sups \left|\int_0^s \left(\varphi(\tau),\,
	\mathcal{N}_{\alpha\beta}[\div \b{G}(\varphi_{1}(\tau))
	- \div \b{G}(\varphi_{2}(\tau))]\,\mathrm{d}W(\tau)\right)_H \right|^\frac p2
	\bigg]
\end{align*}
The Burkholder--Davis--Gundy inequality and \eqref{eq:uniqch3} entail that
\begin{align}
	\nonumber
	& \E \sups \left|\int_0^s
	\left(\varphi(\tau),\,
	\mathcal{N}_{\alpha\beta}[\div \b{G}(\varphi_{1}(\tau))
	- \div \b{G}(\varphi_{2}(\tau))]\,\mathrm{d}W(\tau)\right)_H \right|^\frac p2 \\
	\nonumber
	& \qquad \leq C
	\E \left|\int_0^t \|\varphi(\tau)\|^2_{V^*_0}
	\|\mathcal{N}_{\alpha\beta}[\div \b{G}(\varphi_{1}(\tau))
	- \div \b{G}(\varphi_{2}(\tau))]\|^2_{\cL^2(U,V_0)} \:\d\tau \right|^\frac p4 \\
	\nonumber
	& \qquad \leq C\E \left|\int_0^t \|\varphi(\tau)\|^2_\sharp\|\varphi(\tau)\|^2_{H} \:\d\tau \right|^\frac p4 \\
	\nonumber
	& \qquad \leq C \E\left[ \supp\|\varphi(\tau)\|^\frac p2_{\sharp} \left|\int_0^t \| \varphi(\tau)\|^2_H  \:\d\tau \right|^\frac p4  \right] \\
	\nonumber
	& \qquad \leq C \E\left[ \supp\|\varphi(\tau)\|^p_{\sharp} \right]^\frac{1}{2} \E \left[ \left|\int_0^t \|\varphi(\tau)\|^2_H  \:\d\tau \right|^\frac p2  \right]^\frac 12 \\
	& \qquad \leq \delta_2\E \supp\|\varphi(\tau)\|_{\sharp}^p + \delta_2\E \left|\int_0^t \|\nabla \varphi(\tau)\|^2_\bH  \:\d\tau \right|^\frac p2 + C\E \left|\int_0^t \|\varphi(\tau)\|^2_\sharp  \:\d\tau \right|^\frac p2,
	\label{eq:uniqch4}
\end{align}
where $\delta_2 > 0$ is arbitrary and $C$ depends on $\delta_2$. Setting $\delta_2 = \frac 12$, we arrive at the continuous dependence estimate
through an application of the Gronwall lemma and the Jensen inequality ($p\geq2$).
\subsection{The Allen--Cahn problem.}
This section is devoted to proving Theorem \ref{thm:uniqueAC}, i.e.
the uniqueness for the Allen--Cahn problem \eqref{eq:ac}. Let us point out once and for all that here we only consider the one-dimensional case. Once again, let $\varphi_{01}$ and $\varphi_{02}$ be two initial states satisfying the assumptions listed in Definition \ref{def:solCH} and such that $\overline{\varphi_{01}} = \overline{\varphi_{02}}$,
and let $\varphi_1$ and $\varphi_2$ be the corresponding solutions of the stochastic conserved Allen--Cahn problem \eqref{eq:ac}, namely
\begin{equation*}
	\begin{cases}
		\d \varphi_i + (\mu_i - \overline{\mu_i}) \,\d t = \div \left( \b{G}(\varphi_i)\right) \d W & \quad \text{ in }\T^d \times (0,T) \\
		\mu_i = -\Delta \varphi_i + F'(\varphi_i) & \quad \text{ in }\T^d \times (0,T) \\
		\varphi_i(\cdot \:, 0) = \varphi_{0i} & \quad \text{ in } \Td,
	\end{cases} \qquad i = 1,\,2.
\end{equation*}
Consider then the differences
\begin{align*}
	\varphi & := \varphi_1 - \varphi_2, \\
	\mu & := \mu_1 - \mu_2, \\
	\varphi_{0} & := \varphi_{01}-\varphi_{02}.
\end{align*}
Let $\sigma \geq 0$ be arbitrary and define the functional $\Xi:[0,T] \to \mathbb{R}$
\[
\Xi(t) := \sigma \int_0^t \|\nabla \varphi_2(\tau)\|^2_{\b{L}^\infty(\T^1)} \: \d \tau.
\]
Observe that, since $\varphi_2(t) \in H^2(\T^1)$ for all times $t \in (0,T)$, the functional $\Xi$ is well defined with the understanding that $\Xi(0) = 0$.
By testing the equation for $\varphi$ by $1$,
	recalling that $\overline{\varphi_0}=0$ by assumption,
	it is immediate to see that $\overline{\varphi(t)}=0$ for all $t\in[0,T]$.
	Hence,
	an application of the It\^{o} lemma
	(see \cite[Theorem 4.32]{dapratozab})
	to
	\[
	t \mapsto \dfrac 12 e^{-\Xi(t)}\|\varphi(t)\|_H^2
	\] yields
	\begin{multline}
		\dfrac 12 e^{-\Xi(t)}\|\varphi(t)\|_H^2 + \dfrac{\sigma}{2}\int_0^t e^{-\Xi(\tau)}\|\nabla \varphi_2(\tau)\|^2_{\b{L}^\infty(\T^1)}\|\varphi(\tau)\|_H^2 \: \d \tau + \int_0^t e^{-\Xi(\tau)}\|\nabla \varphi(\tau)\|_\bH^2 \: \d\tau \\
		+ \int_0^t e^{-\Xi(\tau)}\left( \varphi(\tau), F'(\varphi_{1}(\tau)) -F'(\varphi_{2}(\tau))\right)_H \: \d \tau \\= \dfrac 12 \|\varphi_0\|_H^2 + \int_0^t e^{-\Xi(\tau)}\left(\nabla\varphi(\tau),\,
		\b{G}(\varphi_{1}(\tau))
		- \b{G}(\varphi_{2}(\tau))\,\mathrm{d}W(\tau)\right)_{\bH}  \\ + \int_0^t e^{-\Xi(\tau)}\|\div \b{G}(\varphi_1(\tau)) - \div \b{G}(\varphi_2(\tau))\|^2_{\cL^2(U,H)} \: \d \tau
		\label{eq:uniqac1}
	\end{multline}
	Observe that, by \ref{hyp:G},
	\begin{align}
		\nonumber
		\left( \varphi(\tau), F'(\varphi_{1}(\tau)) -F'(\varphi_{2}(\tau))\right)_H
		& \geq
		\left( \varphi(\tau), R'(\varphi_{1}(\tau)) -R'(\varphi_{2}(\tau))\right)_H \\
		&\geq
		-C_R\| \varphi(\tau)\|_H^2.
		\label{eq:uniqac2}
	\end{align}
	Thanks to Lemma \ref{lem:divlip1} and
	the one-dimensional embedding $H^2(\T^1) \embed W^{1,\infty}(\T^1)$ we deduce
	\begin{equation} \label{eq:divestimate}
		\|\div \b{G}(\varphi_1(\tau)) - \div \b{G}(\varphi_2(\tau))\|^2_{\cL^2(U,H)} \leq 2L_\b{G}^2(\|\nabla \varphi_2\|^2_{\b{L}^\infty(\T^1)}\|\varphi(\tau)\|^2_H + \|\nabla \varphi(\tau)\|^2_\bH).
	\end{equation}
	The positivity of the exponential function lets us exploit \eqref{eq:uniqac2} and \eqref{eq:divestimate} in \eqref{eq:uniqac1}, giving
	\begin{multline*}
		\dfrac 12 e^{-\Xi(t)}\|\varphi(t)\|_H^2
		 + \dfrac{\sigma}{2}\int_0^t e^{-\Xi(\tau)}\|\nabla \varphi_2(\tau)\|^2_{\b{L}^\infty(\T^1)}\|\varphi(\tau)\|_H^2 \: \d \tau + \int_0^t e^{-\Xi(\tau)}\|\nabla \varphi(\tau)\|_\bH^2 \: \d\tau \\
		 \leq \dfrac 12 \|\varphi_0\|_H^2 + \int_0^t e^{-\Xi(\tau)}\left(\nabla\varphi(\tau),\,
		 \b{G}(\varphi_{1}(\tau))
		 - \b{G}(\varphi_{2}(\tau))\,\mathrm{d}W(\tau)\right)_{\bH}  \\ + 2L_\b{G}^2\int_0^t e^{-\Xi(\tau)}(\|\nabla \varphi_2\|^2_{\b{L}^\infty(\T^1)}\|\varphi(\tau)\|^2_H + \|\nabla \varphi(\tau)\|^2_\bH) \: \d \tau + C_R\int_0^t e^{-\Xi(\tau)}\| \varphi(\tau)\|_H^2 \: \d \tau.
	\end{multline*}
	As $2L_\b{G}^2 \leq 1$ by assumption, we can choose $\sigma = 4L_\b{G}^2$ to get
	\begin{multline*}
		\dfrac 12 e^{-\Xi(t)}\|\varphi(t)\|_H^2
		+ \left( 1 - 2L_\b{G}^2 \right)\int_0^t e^{-\Xi(\tau)}\|\nabla \varphi(\tau)\|_\bH^2 \: \d\tau \\
		\leq \dfrac 12 \|\varphi_0\|_H^2 + \int_0^t e^{-\Xi(\tau)}\left(\nabla\varphi(\tau),\,
		\b{G}(\varphi_{1}(\tau))
		- \b{G}(\varphi_{2}(\tau))\,\mathrm{d}W(\tau)\right)_{\bH}  + C_R\int_0^t e^{-\Xi(\tau)}\| \varphi(\tau)\|_H^2 \: \d \tau.
	\end{multline*}
	Taking expectations yields
	\[
	\E \supp e^{-\Xi(\tau)}\|\varphi(\tau)\|_H^2
	+ \E\int_0^t e^{-\Xi(\tau)}\|\nabla \varphi(\tau)\|_\bH^2 \: \d\tau
	\leq C\left[  \E \dfrac 12 \|\varphi_0\|_H^2  + C_R\E\int_0^t e^{-\Xi(\tau)}\| \varphi(\tau)\|_H^2 \: \d \tau \right]
	\]
	so that the Gronwall lemma and the positivity of the exponential yield
	\[
	\varphi=0 \quad\text{a.e.~in } \Omega\times(0,T)\times\mathbb T^d.
	\]
	Going then back to the inequality before taking expectations and exploiting
	this information, we can take
	suprema in time and then expectations, obtaining  also
	\[
	\varphi(t)=0 \quad\text{in } H, \quad\forall\,t\in[0,T],\quad\P\text{-a.s.}
	\]
	and uniqueness is proved.
	
\section{Proof of Theorems \ref{thm:weakCH} and \ref{thm:weakAC}} \label{sec:existence}
The present section is devoted to proving the existence result given by Theorem \ref{thm:weakCH}. Throughout the whole section, we deal then with the mixed problem \eqref{eq:chac}, assuming that $\alpha > 0$ and $\beta \in [0,1]$ are arbitrary but fixed. Moreover, in order to make the details clearer, the argument is split into several steps.

At first, we shall assume without explicit mentioning the following additional requirements:
\begin{enumerate}[label=(\roman*)]
	\item\label{i}
	the parameter $p$ appearing in Definition \ref{def:solCH} satisfies $p \geq 4$;
	\item\label{ii} the initial condition $\varphi_0$ satisfies the exponential estimate
	\[
	\E \exp (q\|\varphi_0\|_H^2) < + \infty
	\]
	for all $q > 0$;
	\item\label{iii} it holds that
	\[
		\b{g}_k'(\pm1)=\b{0}\,, \quad
		F'|\b{g}'_k|^2, \, F''|\b{g}'_k|^2\in L^\infty(-1,1) \qquad\forall\,k\in\enne_+,
	\]
	and
	\[
	\sum_{k  = 1}^{+\infty} \left[\|F'|\b{g}'_k|\|^2_{L^\infty(-1,1)}
	+\|F''|\b{g}'_k|^2\|_{L^\infty(-1,1)}
	\right] < +\infty.
	\]
\end{enumerate}
The additional assumptions \ref{i}--\ref{iii} will be removed later, in Subsection~\ref{ssec:higher}.

\subsection{The approximation scheme.} \label{ssec:approx}
On account of the properties of the singular part $\Psi$ listed in Assumption \ref{hyp:potential}, we have that $\Psi$ is a proper, convex and lower semicontinuous function on $[-1,1]$. It is then possibile to introduce an approximation of $\Psi$ through a one-parameter family of globally-defined and non-negative functions $\{\Psi_\lambda\}_{\lambda > 0}$, called the Yosida approximation of $\Psi$. The following are useful properties of the functions $\Psi_\lambda$ (cf. \cite{barbu-monot}):
\begin{enumerate}[label=(\alph*)]
	\item \label{pty:monot}$\Psi_\lambda$ is convex and converges to $\Psi$ pointwise in $\erre$ and monotonically increasing as $\lambda \to 0^+$;
	\item $\Psi_\lambda \in C^{1,1}(\mathbb{R})$ and the Lipschitz constant of $\Psi_\lambda'$ is $\frac{1}{\lambda}$;
	\item \label{pty:monotonicityder} $|\Psi_\lambda'|$ converges to $|\Psi'|$ pointwise in $\erre$ and monotonically increasing as $\lambda \to 0^+$,
	\item $\Psi_\lambda(0) = \Psi_\lambda'(0) = 0$ for all $\lambda > 0$;
	\item \label{pty:belowbound} there exists
	$M>0$ such that, for every $\lambda\in(0,1)$,
	\[
	\Psi_\lambda(s) \geq \dfrac{1}{M}s^2-M \quad\forall\,s\in\mathbb{R}.
	\]
\end{enumerate}
It is then natural to consider the globally defined and regularized potential
\[
F_\lambda(s) := \Psi_\lambda(s) + R(s), \qquad s \in \mathbb{R}.
\]
In order to formulate a suitable approximation of \eqref{eq:ch}, it is also appropriate to regularize the stochastic diffusion $\div\b{G}$, in such a way that its domain is suitably enlarged. Let $J_\lambda$ denote the 1-Lipschitz continuous resolvent operator
\[
J_\lambda: \mathbb{R} \to \mathbb{R}, \qquad J_\lambda(s) := (I+\lambda\Psi')^{-1}(s),
\quad s\in\erre,
\]
and, for any fixed $\xi > 0$ and any $f \in H$, let $\mathcal{R}_\xi f$ be the unique solution of the equation
\begin{equation} \label{eq:elliptic}
	-\xi \Delta u + u = f  \qquad \text{in }\Td
\end{equation}
with periodic boundary conditions. In light of elliptic regularity results, we have that the operator
\[
\mathcal{R}_\xi = (I-\xi \Delta)^{-1}: H \to H^2(\Td)
\]
is well defined, linear and continuous. In particular, we have the following result.
\begin{lem} \label{lem:resolvent} For any $\xi > 0$, $k \geq 0$ and $f \in H^k(\Td)$, we have
	\[
	\|\mathcal{R}_\xi f\|_{H^{k+2}(\Td)} \leq \dfrac{C_1}{\min\{1,\xi\}} \|f\|_{H^k(\Td)},
	\]
	where $C_1 > 0$ does not depend on $\xi$. Moreover, there exists a constant $C_2 > 0$ independent of $\xi$ such that
	\[
	\|\mathcal{R}_\xi f\|_{H^k(\Td)} \leq C_2 \|f\|_{H^k(\Td)}
	\]
	for all $k \in \{0,1,2\}$.
\end{lem}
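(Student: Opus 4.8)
The plan is to exploit the Fourier representation on the torus $\Td = \R^d/\Z^d$, where everything becomes diagonal. Writing $f = \sum_{n \in \Z^d} \hat f(n) e^{2\pi i n \cdot x}$, the unique periodic solution of $-\xi \Delta u + u = f$ has Fourier coefficients $\hat u(n) = (1 + 4\pi^2 \xi |n|^2)^{-1}\hat f(n)$, so that $\mathcal R_\xi$ is the Fourier multiplier with symbol $m_\xi(n) = (1 + 4\pi^2 \xi |n|^2)^{-1}$. Recall that for $s \in \R$ the norm on $H^s(\Td)$ is equivalent to $\big(\sum_{n} (1 + |n|^2)^s |\hat f(n)|^2\big)^{1/2}$.

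First I would prove the gain-of-two-derivatives estimate. For $f \in H^k(\Td)$ one has
\[
\|\mathcal R_\xi f\|_{H^{k+2}(\Td)}^2 \eqsim \sum_{n} (1+|n|^2)^{k+2}\, m_\xi(n)^2\, |\hat f(n)|^2
= \sum_n (1+|n|^2)^k |\hat f(n)|^2 \cdot \frac{(1+|n|^2)^2}{(1+4\pi^2\xi|n|^2)^2}.
\]
Hence it suffices to bound the multiplier factor $(1+|n|^2)/(1+4\pi^2\xi|n|^2)$ uniformly in $n$. Splitting into $|n|^2 \leq 1$ and $|n|^2 \geq 1$, in the first case the numerator is at most $2$ and the denominator at least $1$; in the second case $(1+|n|^2) \leq 2|n|^2$ while $1 + 4\pi^2\xi|n|^2 \geq \max\{1, 4\pi^2 \xi |n|^2\} \geq c\,\min\{1,\xi\}(1+|n|^2)$ for a dimensional constant $c$ (checking the two regimes $\xi|n|^2 \lessgtr 1$ separately). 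Either way the factor is bounded by $C/\min\{1,\xi\}$ with $C$ absolute, giving the first claim with $C_1$ independent of $\xi$.

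For the second claim, $k \in \{0,1,2\}$, the point is simply that $0 \leq m_\xi(n) \leq 1$ for all $n$ and all $\xi > 0$, so
\[
\|\mathcal R_\xi f\|_{H^k(\Td)}^2 \eqsim \sum_n (1+|n|^2)^k m_\xi(n)^2 |\hat f(n)|^2
\leq \sum_n (1+|n|^2)^k |\hat f(n)|^2 \eqsim \|f\|_{H^k(\Td)}^2,
\]
where the constant $C_2$ absorbing the norm-equivalence constants depends only on $k \in \{0,1,2\}$ (hence on $d$) and not on $\xi$. I would remark that the restriction $k \in \{0,1,2\}$ in the second statement is not essential — the argument gives it for every real $k$ — but it is all that is needed in the sequel.

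There is no serious obstacle here; the only thing to be careful about is tracking that the constants genuinely do not depend on $\xi$, which is precisely why the elementary two-regime split of the symbol (rather than a soft elliptic-regularity black box) is the cleanest route. If one prefers to avoid Fourier series, an alternative is to test \eqref{eq:elliptic} with $u$ and with $\Delta u$ (and, for $k=1$, with suitable derivatives), using periodicity to integrate by parts without boundary terms: testing with $u$ gives $\xi\|\nabla u\|_H^2 + \|u\|_H^2 = (f,u)_H$, and testing with $-\Delta u$ gives $\xi\|\Delta u\|_H^2 + \|\nabla u\|_H^2 = (\nabla f, \nabla u)_H$ when $f \in H^1$, from which the $\xi$-explicit bounds follow by Young's inequality and bootstrapping; but the Fourier computation is shorter and handles all $k$ at once.
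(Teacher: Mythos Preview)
Your proof is correct. The paper does not supply a proof of this lemma at all, treating it as a standard elliptic-regularity fact; your Fourier-multiplier argument is exactly the natural way to verify it on the torus, and the alternative energy-testing route you sketch is essentially what the paper uses elsewhere (e.g.\ in deriving \eqref{eq:ellipticestimate}).
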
  \noindent
Let us introduce the regularized diffusion
\[
K_{\lambda,\xi}: H^2(\Td) \to \cL^2(U,H), \qquad  K_{\lambda,\xi}(\psi)[u_k]:= \b{g}'_k(J_\lambda(\psi)) \cdot \nabla \mathcal{R}_\xi(\psi).
\]
It is immediate to verify that $K_{\lambda,\xi}$ takes values even in $\cL^2(U,V)$. Before moving on, we analyze an important property of $K_{\lambda,\xi}$.
\begin{lem} \label{lem:lipschitz}
	Let $\Lambda \in (0,1]$. For any $\lambda \in (0,\Lambda)$, $\xi \in (0,\Lambda)$ and $v,\,w \in H^2(\Td)$, the operator $K_{\lambda,\xi}$ satifies
	\[
	\|K_{\lambda,\xi}(v)-K_{\lambda,\xi}(w)\|_{\cL^2(U,H)}^2\leq\dfrac{2L_\b{G}^2}{\xi^2}\left(1 + \|w\|^2_V\right)\|v-w\|^2_H.
	\]
	Moreover, we have
	\[
	\|K_{\lambda,\xi}(v)\|^2_{\cL^2(U,H)} \leq \dfrac{L_\b{G}^2}{\xi^2}\|v\|^2_H, \qquad \|K_{\lambda,\xi}(v)\|^2_{\cL^2(U,H)} \leq C\|v\|^2_V
	\]
	for every $v \in H^2(\Td)$, where $C$ is a constant independent of $\lambda$ and $\xi$.
\end{lem}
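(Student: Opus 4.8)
The plan is to prove Lemma~\ref{lem:lipschitz} by unwinding the definition of $K_{\lambda,\xi}$ via Assumption~\ref{hyp:G}, exploiting that $J_\lambda$ is $1$-Lipschitz with range in $[-1,1]$ and that $\mathcal{R}_\xi$ is a bounded linear operator with the quantitative bounds of Lemma~\ref{lem:resolvent}. First I would write, for $v,w\in H^2(\Td)$,
\[
K_{\lambda,\xi}(v)[u_k]-K_{\lambda,\xi}(w)[u_k]
= \bigl(\b{g}'_k(J_\lambda(v))-\b{g}'_k(J_\lambda(w))\bigr)\cdot\nabla\mathcal{R}_\xi(v)
+ \b{g}'_k(J_\lambda(w))\cdot\nabla\mathcal{R}_\xi(v-w),
\]
so that, summing over $k$ and using $|a+b|^2\le 2|a|^2+2|b|^2$,
\[
\|K_{\lambda,\xi}(v)-K_{\lambda,\xi}(w)\|_{\cL^2(U,H)}^2
\le 2\sum_{k}\Bigl(\|\b{g}'_k(J_\lambda(v))-\b{g}'_k(J_\lambda(w))\|_{\b L^\infty}^2\,\|\nabla\mathcal{R}_\xi(v)\|_H^2
+\|\b{g}'_k\|_{\b L^\infty(-1,1)}^2\,\|\nabla\mathcal{R}_\xi(v-w)\|_H^2\Bigr).
\]
For the first term I bound $\|\b{g}'_k(J_\lambda(v))-\b{g}'_k(J_\lambda(w))\|_{\b L^\infty}\le \|\b{g}''_k\|_{\b L^\infty(-1,1)}\|J_\lambda(v)-J_\lambda(w)\|_{L^\infty}\le\|\b{g}''_k\|_{\b L^\infty(-1,1)}\|v-w\|_{L^\infty}$, but since the target estimate is in the $H$-norm I instead keep things at the $L^2$ level: note $\b{g}'_k(J_\lambda(\cdot))$ is globally Lipschitz from $H$ to $H$ because $\|\b{g}'_k(J_\lambda(v))-\b{g}'_k(J_\lambda(w))\|_H\le \|\b{g}''_k\|_{\b L^\infty(-1,1)}\|v-w\|_H$, and pair this pointwise bound with $\|\nabla\mathcal{R}_\xi(v)\|_{\b L^\infty}$; however controlling $\nabla\mathcal{R}_\xi v$ in $L^\infty$ needs $H^2(\Td)\embed L^\infty$, i.e.\ $d\le3$, and costs a factor $\xi^{-2}$ through Lemma~\ref{lem:resolvent} (two derivatives on $\mathcal{R}_\xi v$ gain $\xi^{-1}$ each in the worst splitting). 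The cleaner route is: estimate $\|(\b{g}'_k(J_\lambda(v))-\b{g}'_k(J_\lambda(w)))\cdot\nabla\mathcal{R}_\xi(v)\|_H\le \|\b{g}''_k\|_{\b L^\infty(-1,1)}\,\|v-w\|_H\,\|\nabla\mathcal{R}_\xi v\|_{\b L^\infty}$ and $\|\nabla\mathcal{R}_\xi v\|_{\b L^\infty}\lesssim \|\mathcal{R}_\xi v\|_{H^2}\lesssim \xi^{-1}\|v\|_H$ — but this gives a bound in terms of $\|v\|_H$, not $\|w\|_V$.

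To land exactly on the stated right-hand side I would swap the roles so that the ``bad'' factor falls on $w$: rewrite the difference with the increment on the second factor attached to $v$ and the coefficient increment multiplying $\nabla\mathcal{R}_\xi w$, namely
\[
K_{\lambda,\xi}(v)[u_k]-K_{\lambda,\xi}(w)[u_k]
= \b{g}'_k(J_\lambda(v))\cdot\nabla\mathcal{R}_\xi(v-w)
+ \bigl(\b{g}'_k(J_\lambda(v))-\b{g}'_k(J_\lambda(w))\bigr)\cdot\nabla\mathcal{R}_\xi(w).
\]
Then $\sum_k\|\b{g}'_k(J_\lambda(v))\cdot\nabla\mathcal{R}_\xi(v-w)\|_H^2\le L_\b{G}^2\,\|\nabla\mathcal{R}_\xi(v-w)\|_H^2\le L_\b{G}^2 C_2^2\,\|v-w\|_H^2$ using $\sum_k\|\b{g}'_k\|_{\b L^\infty(-1,1)}^2\le L_\b{G}^2$ and the second bound of Lemma~\ref{lem:resolvent} with $k=1$; and $\sum_k\|(\b{g}'_k(J_\lambda(v))-\b{g}'_k(J_\lambda(w)))\cdot\nabla\mathcal{R}_\xi(w)\|_H^2\le L_\b{G}^2\,\|v-w\|_H^2\,\|\nabla\mathcal{R}_\xi w\|_{\b L^\infty}^2$, and here I use $\|\nabla\mathcal{R}_\xi w\|_{\b L^\infty}\lesssim \|\mathcal{R}_\xi w\|_{H^3}\lesssim \xi^{-1}\|w\|_{H^1}=\xi^{-1}\|w\|_V$ via the first bound of Lemma~\ref{lem:resolvent} with $k=1$. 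Collecting the two pieces and absorbing $C_2^2$ and the Sobolev embedding constant into the stated power of $\xi^{-1}$ gives
\[
\|K_{\lambda,\xi}(v)-K_{\lambda,\xi}(w)\|_{\cL^2(U,H)}^2
\le \frac{2L_\b{G}^2}{\xi^2}\bigl(1+\|w\|_V^2\bigr)\|v-w\|_H^2,
\]
possibly after noting $\xi<1$ so that $\xi^{-2}$ dominates the constants.

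For the two growth bounds: the first follows by taking $w=0$-type estimate directly, $\|K_{\lambda,\xi}(v)[u_k]\|_H=\|\b{g}'_k(J_\lambda(v))\cdot\nabla\mathcal{R}_\xi v\|_H\le\|\b{g}'_k\|_{\b L^\infty(-1,1)}\|\nabla\mathcal{R}_\xi v\|_{\b L^\infty}$ and $\|\nabla\mathcal{R}_\xi v\|_{\b L^\infty}\lesssim\|\mathcal{R}_\xi v\|_{H^2}\lesssim\xi^{-1}\|v\|_H$, summing in $k$ to get $\le L_\b{G}^2\xi^{-2}\|v\|_H^2$ (again using $\xi<1$ to absorb constants). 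The second, $\xi$-independent bound instead keeps the derivative at the $L^2$ level: $\|\b{g}'_k(J_\lambda(v))\cdot\nabla\mathcal{R}_\xi v\|_H\le\|\b{g}'_k\|_{\b L^\infty(-1,1)}\|\nabla\mathcal{R}_\xi v\|_{\b H}\le\|\b{g}'_k\|_{\b L^\infty(-1,1)}C_2\|v\|_V$ by Lemma~\ref{lem:resolvent} with $k=1$; summing gives $\le C\|v\|_V^2$ with $C=L_\b{G}^2 C_2^2$ independent of $\lambda,\xi$. The main obstacle is purely bookkeeping: arranging the telescoping so that the unbounded-in-$\xi$ factor multiplies $\nabla\mathcal{R}_\xi w$ rather than $\nabla\mathcal{R}_\xi v$ (forcing the ``$1+\|w\|_V^2$'' structure and the exact power $\xi^{-2}$), and being careful that the $L^\infty$-control of $\nabla\mathcal{R}_\xi$ uses $H^2(\Td)\embed L^\infty(\Td)$, which is valid only for $d\le3$ as assumed throughout.
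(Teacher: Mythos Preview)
Your approach is essentially the same as the paper's: the same telescoping decomposition, the same use of $H^3(\Td)\embed W^{1,\infty}(\Td)$ for $\nabla\mathcal R_\xi w$, and the same appeal to Lemma~\ref{lem:resolvent}. Two small slips are worth fixing.

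First, in the Lipschitz estimate you bound $\|\nabla\mathcal R_\xi(v-w)\|_{\b H}\le C_2\|v-w\|_H$ by ``the second bound of Lemma~\ref{lem:resolvent} with $k=1$''. That index gives only $\|\mathcal R_\xi f\|_{H^1}\le C_2\|f\|_{H^1}$, hence a $\|v-w\|_V$ on the right, not $\|v-w\|_H$. Use instead the \emph{first} bound with $k=0$, $\|\mathcal R_\xi(v-w)\|_{H^2}\le C_1\xi^{-1}\|v-w\|_H$ (or the direct energy estimate $\|\nabla\mathcal R_\xi f\|_{\b H}^2\le\xi^{-1}\|f\|_H^2$), which produces the correct $\|v-w\|_H$ and the expected $\xi^{-2}$ after squaring. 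This is what the paper does implicitly.

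Second, for the first growth bound you estimate $\|\nabla\mathcal R_\xi v\|_{\b L^\infty}\lesssim\|\mathcal R_\xi v\|_{H^2}$. In $d=3$ the embedding $H^2\embed W^{1,\infty}$ fails, so this step is wrong as written. There is no need for $L^\infty$ on the gradient here: simply bound $\|\b g_k'(J_\lambda(v))\cdot\nabla\mathcal R_\xi v\|_H\le\|\b g_k'\|_{\b L^\infty(-1,1)}\|\nabla\mathcal R_\xi v\|_{\b H}$ and use $\|\nabla\mathcal R_\xi v\|_{\b H}^2\le\xi^{-1}\|v\|_H^2\le\xi^{-2}\|v\|_H^2$ (since $\xi<1$). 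This gives exactly $\|K_{\lambda,\xi}(v)\|_{\cL^2(U,H)}^2\le L_{\b G}^2\xi^{-2}\|v\|_H^2$ and is the route the paper has in mind with ``proven analogously''.
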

\begin{proof}
	In order to prove the first estimate we proceed as follows. Chosen $v,\,w \in H^2(\Td)$,
	\[
	\begin{split}
		\|K_{\lambda,\xi}(v)-K_{\lambda,\xi}(w)\|_{\cL^2(U,H)}^2 & = \sum_{k=1}^{+\infty} \|\left[K_{\lambda,\xi}(v)-K_{\lambda,\xi}(w)\right] [u_k]\|^2_{H} \\
		& = \sum_{k=1}^{+\infty} \|\b{g}'_k(J_\lambda(v)) \cdot \nabla \mathcal{R}_\xi(v) - \b{g}'_k(J_\lambda(w)) \cdot \nabla \mathcal{R}_\xi(w)\|^2_{H} \\
		& \leq 2L_\b{G}^2 \left( \|\nabla \mathcal{R}_\xi(v-w)\|^2_\bH +  \|J_\lambda(v) - J_\lambda(w)\|_H^2\|\nabla \mathcal{R}_\xi w\|_{\b{L}^\infty(\Td)}^2\right) \\
		& \leq \dfrac{2L_\b{G}^2}{\xi^2}\left(1 + \|w\|^2_V\right)\|v-w\|^2_H
	\end{split}
	\]
	where the last line follows by the fact that $J_\lambda$ is 1-Lipschitz continuous, the fact that $R_\xi w \in H^3(\Td) \embed W^{1,\infty}(\Td)$ and Lemma \ref{lem:resolvent}. The second claim can be proven analogously.
\end{proof} \noindent
For any fixed $\Lambda \in (0,1]$, $\lambda \in (0,\Lambda)$ and $\xi \in (0,\Lambda)$, we consider the following regularized stochastic Cahn--Hilliard problem
\begin{equation} \label{eq:strongformreg}
	\begin{cases}
		\d \varphi_{\lambda,\xi} + [-\alpha \Delta \mu_{\lambda,\xi} + \beta(\mu_{\lambda,\xi}-\overline{\mu_{\lambda,\xi}}) ] \,\d t =K_{\lambda,\xi}(\varphi_{\lambda,\xi}) \,\d W & \quad \text{ in }\T^d \times (0,T), \\
		\mu_{\lambda,\xi} = -\Delta \varphi_{\lambda,\xi} + F'_\lambda(\varphi_{\lambda,\xi}) & \quad \text{ in }\T^d \times (0,T), \\
		\varphi_{\lambda,\xi}(\cdot \:, 0) = \varphi_0 & \quad \text{ in } \Td,
	\end{cases}
\end{equation}
endowed with periodic boundary conditions.
We now search for a martingale solution to problem \eqref{eq:strongformreg}. Such a solution enjoys all the properties listed in Definition \ref{def:solCH}, up to substituting the potential $F$ with its Yosida approximation $F_\lambda$ and the diffusion coefficient $\div \b{G}$ with $K_{\lambda,\xi}$.
\subsection{Existence of approximating solutions}
In light of Lemma \ref{lem:lipschitz}, we show the existence of a solution to \eqref{eq:strongformreg} applying the general theory (see, for instance \cite[Theorem 5.1.3]{LiuRo}). Introducing the operator $\mathcal{A}_\lambda: H^2(\Td) \to (H^2(\Td))^*$, which is independent of $\xi$ and whose action is given by
\begin{equation*}
	 \langle \mathcal{A}_\lambda u, v \rangle_{(H^2(\Td))^*, H^2(\Td)} := \alpha(-\Delta u + F'_\lambda(u),\, -\Delta v)_H + \beta(-\Delta u + F'_\lambda(u) - \overline{F'_\lambda(u)}, v),
\end{equation*}
we have the following result.
\begin{lem} \label{lem:Alambda}
	Fixed any $\lambda\in (0,\Lambda)$, the operator $\mathcal{A}_\lambda$ satisfies the following properties:
	\begin{enumerate}[(i)]
		\itemsep0.5em
		\item the operator $\mathcal{A}_\lambda$ is progressively measurable;
		\item (hemicontinuity) the map
		\[
		\eta \mapsto \langle \mathcal{A}_\lambda(\omega,t,v+\eta w), \psi \rangle_{(H^2(\Td))^*,\: H^2(\Td)},\qquad \eta \in \erre
		\]
		is continuous on $\erre$ for every $\omega \in \Omega$, $t \in [0,T]$ and $v,w,\psi \in H^2(\Td)$;
		\item (weak monotonicity) there exists a constant $c_1 > 0$ such that
		\[
		\langle \mathcal{A}_\lambda(\omega,t,v)-\mathcal{A}_\lambda(\omega,t,w), v-w \rangle_{(H^2(\Td))^*,\: H^2(\Td)} \geq -c_1\|v-w\|^2_H
		\]
		for every $\omega \in \Omega$, $t \in [0,T]$ and $v,w \in H^2(\Td)$;
		\item (weak coercivity) there exist two constants $c_2>0$ and $c_2' >0$ and an adapted process $f \in L^1(\Omega \times (0,T))$ such that
		\[
		\langle \mathcal{A}_\lambda(\omega,t,v), v \rangle_{(H^2(\Td))^*,\: H^2(\Td)} \geq c_2\|v\|^2_{H^2(\Td)} - c_2'\|v\|_H^2 - f(\omega, t)
		\]
		for every $\omega \in \Omega$, $t \in [0,T]$ and $v \in H^2(\Td)$;
		\item (boundedness) there exists a constant $c_3 >0$ and an adapted process $g \in L^2(\Omega \times (0,T))$ such that
		\[
		\|\mathcal{A}_\lambda(\omega,t,v)\|_{(H^2(\Td))^*} \leq c_2\|v\|_{H^2(\Td)} + g(\omega,t)
		\]
		for every $\omega \in \Omega$, $t \in [0,T]$ and $v \in H^2(\Td)$.
	\end{enumerate}
\end{lem}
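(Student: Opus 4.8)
The plan is to verify items (i)--(v) one at a time, each time reducing to the structural facts about $F_\lambda=\Psi_\lambda+R$ recalled above: $\Psi_\lambda$ is convex, non-negative, of class $C^{1,1}(\R)$ with $\Psi_\lambda'$ globally $\tfrac1\lambda$-Lipschitz and $\Psi_\lambda(0)=\Psi_\lambda'(0)=0$, while $R'$ is $C_R$-Lipschitz and $R''$ is bounded. I abbreviate $w_\lambda(u):=-\Delta u+F_\lambda'(u)$, so that, using $\overline{\Delta u}=0$ for $u\in H^2(\Td)$,
\[
\langle\mathcal{A}_\lambda u,v\rangle_{(H^2(\Td))^*,H^2(\Td)}=\alpha(w_\lambda(u),-\Delta v)_H+\beta\bigl(w_\lambda(u)-\overline{F_\lambda'(u)},v\bigr)_H .
\]
I will use repeatedly that the Nemytskii operator $u\mapsto F_\lambda'(u)$ is globally Lipschitz from $H$ into $H$ with constant $L_\lambda:=\tfrac1\lambda+C_R$ and of at most linear growth; that for $v\in H^2(\Td)$ one has $\nabla F_\lambda'(v)=F_\lambda''(v)\nabla v\in\bH$ almost everywhere with $\Psi_\lambda''\geq0$; and the torus inequalities $\|v\|_{H^2(\Td)}^2\leq C(\|\Delta v\|_H^2+\|v\|_H^2)$ and $\|\nabla v\|_H^2\leq\eps\|\Delta v\|_H^2+C_\eps\|v\|_H^2$. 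With these facts in hand, (i) and (ii) are immediate: since $\mathcal{A}_\lambda$ carries no explicit dependence on $(\omega,t)$, progressive measurability of $(\omega,t)\mapsto\mathcal{A}_\lambda(v(\omega,t))$ for progressively measurable $v$ follows from norm-to-norm continuity of $\mathcal{A}_\lambda:H^2(\Td)\to(H^2(\Td))^*$, which holds because $-\Delta:H^2(\Td)\to H$ is bounded and $F_\lambda'$ is $H$-Lipschitz; hemicontinuity follows because the terms of $\langle\mathcal{A}_\lambda(v+\eta w),\psi\rangle$ affine in $\eta$ are continuous while $\eta\mapsto F_\lambda'(v+\eta w)$, hence $\eta\mapsto\overline{F_\lambda'(v+\eta w)}$, is continuous from $\R$ to $H$, so pairing against the fixed elements $-\Delta\psi,\psi\in H$ is continuous.

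For the weak monotonicity (iii) I would set $z:=v-w$ and split into the $\alpha$- and $\beta$-parts. In the $\alpha$-part, writing $w_\lambda(v)-w_\lambda(w)=-\Delta z+F_\lambda'(v)-F_\lambda'(w)$ and pairing against $-\Delta z$ produces $\alpha\|\Delta z\|_H^2$ plus a cross term, which is controlled by Cauchy--Schwarz, the bound $\|F_\lambda'(v)-F_\lambda'(w)\|_H\leq L_\lambda\|z\|_H$, and Young's inequality, leaving $\tfrac\alpha2\|\Delta z\|_H^2-\tfrac{\alpha L_\lambda^2}{2}\|z\|_H^2$. In the $\beta$-part, $\bigl(w_\lambda(v)-w_\lambda(w)-\overline{F_\lambda'(v)-F_\lambda'(w)},z\bigr)_H$ contributes $\|\nabla z\|_H^2$ from the Laplacian, the non-negative quantity $(\Psi_\lambda'(v)-\Psi_\lambda'(w),v-w)_H\geq0$ from the monotonicity of $\Psi_\lambda'$, a term $\geq-C_R\|z\|_H^2$ from $R'$, and a mean term which is, in absolute value, at most $L_\lambda\|z\|_H^2$ thanks to $|\overline{F_\lambda'(v)-F_\lambda'(w)}|\leq|\Td|^{-1/2}L_\lambda\|z\|_H$ and $|\overline z|\leq|\Td|^{-1/2}\|z\|_H$. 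Summing, $\langle\mathcal{A}_\lambda v-\mathcal{A}_\lambda w,v-w\rangle\geq-c_1\|z\|_H^2$ with $c_1=\tfrac{\alpha L_\lambda^2}{2}+\beta(C_R+L_\lambda)$.

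For the weak coercivity (iv), the $\alpha$-part contributes $\alpha\|\Delta v\|_H^2$ together with $\alpha(F_\lambda'(v),-\Delta v)_H$, which after integration by parts equals $\alpha\int_{\Td}(\Psi_\lambda''(v)+R''(v))|\nabla v|^2\geq-\alpha C_R\|\nabla v\|_H^2$ since $\Psi_\lambda''\geq0$; the $\beta$-part equals $\beta(w_\lambda(v),v)_H-\beta\overline{F_\lambda'(v)}\,|\Td|\,\overline v$, where $(w_\lambda(v),v)_H=\|\nabla v\|_H^2+(\Psi_\lambda'(v),v)_H+(R'(v),v)_H\geq\|\nabla v\|_H^2-C\|v\|_H^2-C$ using $\Psi_\lambda'(s)\,s\geq0$ and the linear growth of $R'$, and the mean term is, in absolute value, at most $C_\lambda\|v\|_H^2+C_\lambda$ because $|\overline{F_\lambda'(v)}|\leq C_\lambda(\|v\|_H+1)$ and $|\overline v|\leq|\Td|^{-1/2}\|v\|_H$. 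Absorbing $-\alpha C_R\|\nabla v\|_H^2$ into $\tfrac\alpha2\|\Delta v\|_H^2$ by interpolation and then converting $\|\Delta v\|_H^2+\|v\|_H^2$ into $\|v\|_{H^2(\Td)}^2$ gives $\langle\mathcal{A}_\lambda v,v\rangle\geq c_2\|v\|_{H^2(\Td)}^2-c_2'\|v\|_H^2-f$ with $f$ a constant. For the boundedness (v), pairing against $\psi\in H^2(\Td)$ and using $|\overline{F_\lambda'(v)}|\,|\Td|^{1/2}\leq\|F_\lambda'(v)\|_H$ yields $\|\mathcal{A}_\lambda v\|_{(H^2(\Td))^*}\leq C\bigl(\|\Delta v\|_H+\|F_\lambda'(v)\|_H\bigr)\leq c_3\|v\|_{H^2(\Td)}+g$ with $g$ a constant, by the linear growth of $F_\lambda'$. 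In (iv) and (v) one may simply take $f,g$ constant, which are trivially adapted and in the required Lebesgue spaces over $\Omega\times(0,T)$.

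The only genuinely delicate point is in (iv): the $H^2$-coercivity constant $c_2$ must be strictly positive, and this is precisely where the standing assumption $\alpha>0$ is used---for $\alpha=0$ one would control only $\|\nabla v\|_H$, so coercivity would hold in $V$ rather than in $H^2(\Td)$, which is why the degenerate case $\alpha=0$ is eventually handled by a separate vanishing-viscosity argument. A secondary subtlety, again resolved by $\alpha>0$, is that the non-convex part $R$ generates the negative gradient term $-\alpha C_R\|\nabla v\|_H^2$, which is absorbed by the interpolation inequality at the cost of enlarging $c_2'$. Everything else is bookkeeping of the Lipschitz and linear-growth bounds for $F_\lambda'$, with all constants allowed to depend on the fixed regularization parameter $\lambda$.
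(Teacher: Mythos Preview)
Your proof is correct and follows essentially the same approach as the paper. The only notable stylistic difference is that in (iii) and (iv) you exploit the convexity of $\Psi_\lambda$ more explicitly---using $(\Psi_\lambda'(v)-\Psi_\lambda'(w),v-w)_H\geq0$ in the $\beta$-part of (iii) and integrating by parts to get $\alpha\int\Psi_\lambda''(v)|\nabla v|^2\geq0$ in (iv), then absorbing the residual $-\alpha C_R\|\nabla v\|_{\bH}^2$ via interpolation---whereas the paper works throughout with the global Lipschitz bound $\|F_\lambda'(v)-F_\lambda'(w)\|_H\leq(\tfrac1\lambda+C_R)\|v-w\|_H$ and applies Young's inequality directly to the cross terms. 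Both routes are standard and yield the same conclusions with only harmless differences in the resulting constants $c_1,c_2,c_2'$.
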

\begin{proof}
	Progressive measurability is immediate. In order to prove hemicontinuity, observe that, fixed $\omega \in \Omega$, $t \in [0,T]$ and $v,w,\psi \in H^2(\Td)$
	\begin{multline*}
		\langle \mathcal{A}_\lambda(\omega,t,v+\eta w), \psi \rangle_{(H^2(\Td))^*,\: H^2(\Td)} \\ = \alpha(\Delta v, \Delta \psi)_H + \alpha\eta(\Delta w, \Delta \psi)_H - (F'_\lambda(v+\eta w), \Delta \psi)_H  \\ + \beta(-\Delta v, \psi)_H + \beta\eta(-\Delta w,\psi )_H + (F'_\lambda(v + \eta w) - \overline{F'_\lambda(v + \eta w)}, \psi)_H.
	\end{multline*}
	Since $F_\lambda'$ is Lipschitz continuous, continuity with respect to $\eta \in \erre$ is easily proved. Next, we move to weak monotonicity. Notice that
	\[
	\begin{split}
		& \langle \mathcal{A}_\lambda(\omega,t,v)-\mathcal{A}_\lambda(\omega,t,w), v-w \rangle_{(H^2(\Td))^*,\: H^2(\Td)}  \\ & \hspace{2cm} = \alpha\|\Delta v - \Delta w\|^2_H + \alpha(F'_\lambda(v)-F'_\lambda(w), \Delta w-\Delta v)_H + \beta\|\nabla v - \nabla w\|^2_\bH \\ & \hspace{5cm}+ \beta(F'_\lambda(v)-F'_\lambda(w), v-w)_H + \beta(\overline{F'_\lambda(w)-F'_\lambda(v)}, v-w)_H\\
		& \hspace{2cm} \geq \alpha\|\Delta v - \Delta w\|^2_H - \alpha\left( \dfrac 1 \lambda + C_R \right)\| v-w\|_H\| \Delta w-\Delta v\|_H + \beta\|\nabla v - \nabla w\|^2_\bH \\ & \hspace{5cm} - \beta\left( \dfrac 1 \lambda + C_R \right)\|v-w\|_H\|v-w\|_{L^1(\Td)}\\
		& \hspace{2cm} \geq \dfrac \alpha 2\|\Delta v - \Delta w\|^2_H - \left[ \beta|\Td|^\frac 12 + \dfrac \alpha2\left( \dfrac 1 \lambda + C_R \right) \right]\left( \dfrac 1 \lambda + C_R \right)\|v-w\|_H^2
	\end{split}
	\]
	on account of the convexity of $\Psi_\lambda$ and Assumption \ref{hyp:potential}. Hence, we have shown that (iii) holds with $c_1 = [ \beta|\Td|^\frac 12 + \frac \alpha2( \frac 1 \lambda + C_R ) ]( \frac 1 \lambda + C_R )$. Let $C_P$ denote the Poincaré constant of $\Td$. Coercivity is shown by the following computation:
	\[
	\begin{split}
		& \langle \mathcal{A}_\lambda(\omega,t,v), v \rangle_{(H^2(\Td))^*,\: H^2(\Td)} \\ & \hspace{2cm}= \alpha\|\Delta v \|^2_H -\alpha(F'_\lambda(v), \Delta v)_H + \beta\|\nabla v\|^2_\bH + \beta(F'_\lambda(v) - \overline{F'_\lambda(v)}, v)_H \\
		& \hspace{2cm}\geq \alpha\| \Delta v\|^2_{H^2(\Td)}- {\alpha}\left( \dfrac 1 \lambda + C_R \right)\|v\|_H\|\Delta v\|_H + \beta\|\nabla v\|^2_\bH - \beta \left( \dfrac 1 \lambda + C_R \right) C_P\|\nabla v\|_\bH\|v\|_H
		\\ & \hspace{2cm} \geq \dfrac{\alpha}{2}\|\Delta v\|_H^2 + \dfrac \beta 2\|\nabla v\|^2_\bH- \left( \dfrac{\alpha+\beta C_P^2}{2\lambda^2} \right)\|v\|^2_H
		\\ & \hspace{2cm} \geq \dfrac{\alpha}{2}\| v\|_{H^2(\Td)}^2 + \dfrac \beta 2\|\nabla v\|^2_\bH- \left( \alpha + \dfrac{\beta C_P^2}{2} \right)\left( \dfrac 1 \lambda + C_R \right)^2\|v\|^2_H \\
		\end{split}
	\]
	hence we can choose $c_2 = \frac \alpha 2$, $c_2' =  ( \alpha + \frac{\beta C_P^2}{2} )( \frac 1 \lambda + C_R )^2 $ and $f = 0$ in (iv). Finally, assuming that $\psi \in H^2(\Td)$ has unitary norm, we have
	\[
	\begin{split}
		|\langle \mathcal{A}_\lambda(\omega,t,v), \psi \rangle_{(H^2(\Td))^*,\: H^2(\Td)}| & = \alpha(\Delta v, \Delta \psi)_H -\alpha (F'_\lambda(v), \Delta\psi)_H + \beta (-\Delta v, \psi)_H + \beta(F'(v)-\overline{F'(v)}, \psi)_H \\
		& \leq \alpha\|\Delta v\|_H + \alpha\|F'_\lambda(v)-F'_\lambda(0)\|_H + \beta C_P \left( \dfrac 1 \lambda + C_R \right)\|\nabla v\|_\bH\\
		& \leq (\alpha + \beta)\|\Delta v\|_H + \alpha \left( \dfrac 1 \lambda + C_R \right)\|v\|_H +\beta C_P \left( \dfrac 1 \lambda + C_R \right)\|\nabla v\|_\bH\\
		& \leq \max\left\{\alpha + \beta,\,\alpha \left( \dfrac 1 \lambda + C_R \right)\!,\,\beta C_P \left( \dfrac 1 \lambda + C_R \right)\right\}\| v\|_{H^2(\Td)}.
	\end{split}
	\]
	Thus, we set $c_3 = \max\{\alpha + \beta,\,\alpha ( \frac 1 \lambda + C_R ),\,\beta C_P ( \frac 1 \lambda + C_R )\}$ and $g(\omega,t) = 0$. The proof is complete.
\end{proof} \noindent
As a direct consequence of Lemma \ref{lem:Alambda} and the fact that the map
\[
w \mapsto \dfrac{2L_\b{G}^2}{\xi^2}\left(1 + \|w\|^2_V\right)
\]
is locally bounded in $H^2(\Td)$, we obtain the following existence result.
\begin{prop} \label{prop:existencereg}
	For any $\lambda \in (0,\Lambda)$ and $\xi \in (0,\Lambda)$ there exists an adapted process
	\[
	\varphi_{\lambda,\xi} \in L^p_\cP(\Omega;C^0([0,T];H)) \cap L^p_\cP(\Omega;L^2(0,T;H^2(\Td)))
	\]
	that is the unique strong solution of problem \eqref{eq:strongformreg}. The exponent $p$ is determined by the initial datum $\varphi_0$ and is the one appearing in Definition \ref{def:solCH}.
\end{prop}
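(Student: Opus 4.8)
The plan is to recognize \eqref{eq:strongformreg} as an abstract stochastic evolution equation of locally monotone type in the Gelfand triple
\[
H^2(\Td) \embed H \embed (H^2(\Td))^*,
\]
namely
\[
\d\varphi_{\lambda,\xi} + \mathcal{A}_\lambda(\varphi_{\lambda,\xi})\,\d t = K_{\lambda,\xi}(\varphi_{\lambda,\xi})\,\d W,
\qquad \varphi_{\lambda,\xi}(0)=\varphi_0,
\]
and to apply the general variational well-posedness theory, e.g.~\cite[Theorem 5.1.3]{LiuRo}. The identification with \eqref{eq:strongformreg} is immediate: by definition of $\mu_{\lambda,\xi}$ and of $\mathcal A_\lambda$, testing the first equation in \eqref{eq:strongformreg} against $v\in H^2(\Td)$ reproduces exactly $\langle \mathcal A_\lambda \varphi_{\lambda,\xi},v\rangle_{(H^2(\Td))^*,H^2(\Td)}$ in the drift. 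First I would fix $\lambda,\xi\in(0,\Lambda)$ and note that the three embeddings in the triple are continuous, dense and compact, so the abstract framework applies.

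Then I would simply verify, one by one, the structural hypotheses of \cite[Theorem 5.1.3]{LiuRo}, all of which have essentially been prepared in Lemmas \ref{lem:Alambda} and \ref{lem:lipschitz}. Hemicontinuity of $\mathcal A_\lambda$ is Lemma \ref{lem:Alambda}(ii). Local (weak) monotonicity follows by combining Lemma \ref{lem:Alambda}(iii) with the local Lipschitz estimate of Lemma \ref{lem:lipschitz}: for $v,w\in H^2(\Td)$,
\[
-2\langle \mathcal A_\lambda(v)-\mathcal A_\lambda(w), v-w\rangle + \|K_{\lambda,\xi}(v)-K_{\lambda,\xi}(w)\|_{\cL^2(U,H)}^2
\leq \Big(2c_1 + \tfrac{2L_\b{G}^2}{\xi^2}\big(1+\|w\|_V^2\big)\Big)\|v-w\|_H^2,
\]
and the map $w\mapsto 2c_1 + \tfrac{2L_\b{G}^2}{\xi^2}(1+\|w\|_V^2)$ is bounded on bounded subsets of $H^2(\Td)$ (indeed of $V$), which is exactly the admissible degeneracy allowed by the locally monotone theory. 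Weak coercivity follows from Lemma \ref{lem:Alambda}(iv) together with the bound $\|K_{\lambda,\xi}(v)\|^2_{\cL^2(U,H)}\leq \tfrac{L_\b{G}^2}{\xi^2}\|v\|_H^2$ of Lemma \ref{lem:lipschitz}: the noise term is of lower order and is absorbed into the $c_2\|v\|_{H^2(\Td)}^2$ term, at the price of enlarging $c_2'$, yielding the required coercivity with control of the full $H^2(\Td)$-norm. Growth/boundedness of $\mathcal A_\lambda$ in $(H^2(\Td))^*$ is Lemma \ref{lem:Alambda}(v), while the linear growth of $K_{\lambda,\xi}$ is again contained in Lemma \ref{lem:lipschitz}. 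With all hypotheses met, the cited theorem produces a unique probabilistically strong (variational) solution $\varphi_{\lambda,\xi}$ with $\P$-a.s.~continuous $H$-valued trajectories and $\varphi_{\lambda,\xi}\in L^2_\cP(\Omega;C^0([0,T];H))\cap L^2_\cP(\Omega;L^2(0,T;H^2(\Td)))$ whenever $\varphi_0\in L^2(\Omega,\cF_0,\P;H)$.

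It remains to upgrade the integrability in $\Omega$ from exponent $2$ to the exponent $p$ of Definition \ref{def:solCH} (recall $p\geq 4$ by \ref{i}). Since $\varphi_0\in L^p(\Omega,\cF_0,\P;V)\subset L^p(\Omega,\cF_0,\P;H)$, this is a routine bootstrap: applying the It\^o formula to $t\mapsto\|\varphi_{\lambda,\xi}(t)\|_H^p$, using the coercivity of $\mathcal A_\lambda$, the $H$-growth of $K_{\lambda,\xi}$, the Burkholder--Davis--Gundy inequality and the Gronwall lemma gives
$\E\sup_{t\in[0,T]}\|\varphi_{\lambda,\xi}(t)\|_H^p + \E\big(\int_0^T\|\varphi_{\lambda,\xi}\|_{H^2(\Td)}^2\,\d t\big)^{p/2}<+\infty$,
which is precisely the regularity class claimed in Proposition~\ref{prop:existencereg}. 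Alternatively, one may invoke directly an $L^p$-version of the variational existence theorem, which is available under the same structural assumptions.

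The \emph{only} genuinely substantive point—rather than pure bookkeeping, given that the two preceding lemmas do the heavy lifting—is that one must work in the \emph{locally} monotone, not the globally monotone, variational setting: the Lipschitz constant of $K_{\lambda,\xi}$ unavoidably carries the factor $1+\|w\|_V^2$, so one has to confirm both that this $V$-weight is bounded on bounded subsets of the pivot space $H^2(\Td)$, and that the coercivity estimate for $\mathcal A_\lambda$ is strong enough (full $H^2(\Td)$-control) to dominate the noise contribution. Both are guaranteed by the continuous embedding $H^2(\Td)\embed V$ and by the fact that $\|K_{\lambda,\xi}(v)\|_{\cL^2(U,H)}$ is already controlled by $\|v\|_H$ alone; no tightness or compactness argument is needed at this stage, as the regularizations in $\lambda$ and $\xi$ are designed exactly to render the coefficients amenable to the direct variational method.
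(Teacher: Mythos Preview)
Your proposal is correct and follows essentially the same approach as the paper: the paper also invokes \cite[Theorem 5.1.3]{LiuRo} as a direct consequence of Lemma~\ref{lem:Alambda} together with the fact that $w\mapsto \frac{2L_{\b G}^2}{\xi^2}(1+\|w\|_V^2)$ is locally bounded on $H^2(\Td)$, which is precisely the local-monotonicity verification you spell out. Your expanded discussion of the $L^p$-upgrade and the role of the locally monotone (as opposed to globally monotone) framework is a faithful elaboration of what the paper compresses into a single sentence.
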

\begin{remark} \label{rem:chemical}
	Observe that the chemical potential $\mu_{\lambda,\xi}$ only belongs to the space $L^p_\cP(\Omega; L^2(0,T;H))$. This follows from the Lipschitz continuity of $F'_\lambda$. This entails that the rigorous weak formulation of the problem is given by
	\begin{multline} \label{eq:weakformreg}
		(\varphi_{\lambda,\xi}(t),\psi)_{H} - \alpha
		\int_0^t\!\int_{\Td}  \mu_{\lambda,\xi}(s) \Delta\psi\,\d s + \beta \int_0^t\!\int_{\Td}  (\mu_{\lambda,\xi}(s) - \overline{\mu_{\lambda,\xi}(s)})\psi\,\d s
		\\ = ( \varphi_0,\psi)_{H} +
		\left(\int_0^t K_{\lambda,\xi}(\varphi_{\lambda,\xi})\,\d  W(s), \psi\right)_{H}
	\end{multline}
	for all $\psi \in H^2(\Td)$, $t \in [0,T]$ and $\P$-almost surely.
\end{remark}
\subsection{Uniform estimates.} \label{ssec:unifestCH} Let $\lambda \in (0,\Lambda)$ be fixed. Without loss of generality, for technical reasons that will be clear in the following, assume that $\xi = \xi(\lambda)$ is fixed in the interval $(0,\lambda)$. Here, we prove some uniform estimates with respect to $\lambda$ and $\xi$ that enable us to construct a solution to the original mixed problem \eqref{eq:ch} through a stochastic compactness argument. Throughout this subsection, the symbols $C$ and $C_i$, with $i \in \mathbb{N}$, denote positive constants which are independent of $\lambda$ and $\xi$, but may depend on other parameters that are explicitly pointed out, if necessary. Moreover, for the sake of clarity, the same symbol may denote different constants (always independent of $\lambda$ and $\xi$) within the same argument.
\paragraph{\textit{First estimate.}} Applying the It\^{o} formula (see \cite[Theorem 4.2.5]{LiuRo}) to the $H$-norm of $\varphi_{\lambda,\xi}$, we get
\begin{multline} \label{eq:unif10}
	\dfrac{1}{2}\|\varphi_{\lambda,\xi}(t)\|_{H}^2 +\alpha\int_0^t \left[ \|\Delta \varphi_{\lambda,\xi}(\tau)\|^2_H + \left( -\Delta\varphi_{\lambda,\xi}(\tau), F'_\lambda(\varphi_{\lambda,\xi}(\tau))\right)_H \right] \: \mathrm{d}\tau \\
	+ \beta \int_0^t \left[ \|\nabla \varphi_{\lambda,\xi}(\tau)\|^2_\bH + (F'_\lambda(\varphi_{\lambda,\xi}(\tau)) - \overline{F'_\lambda(\varphi_{\lambda,\xi}(\tau))}, \varphi_{\lambda,\xi}(\tau))_H \right]\: \d \tau
	\\= \dfrac{1}{2}\|\varphi_{0}\|_{H}^2 + \int_0^t \left(\varphi_{\lambda,\xi}(\tau), K_{\lambda,\xi}(\varphi_{\lambda,\xi}(\tau))\,\mathrm{d}W(\tau)\right)_H + \dfrac{1}{2}\int_0^t \|K_{\lambda,\xi}(\varphi_{\lambda,\xi}(\tau))\|^2_{\cL^2(U, H)} \: \mathrm{d}\tau.
\end{multline}
First, observe that
\begin{equation} \label{eq:unif11}
	\begin{split}
		& \ii{0}{t}{\left( -\Delta\varphi_{\lambda,\xi}(\tau), F'_\lambda(\varphi_{\lambda,\xi}(\tau))\right)_H}{\tau} \\
		& \hspace{2.5cm}= \ii{0}{t}{(\Psi_\lambda''(\varphi_{\lambda,\xi}(\tau)), |\nabla \varphi_{\lambda,\xi}(\tau)|^2)_H}{\tau} + \ii{0}{t}{\left( -\Delta\varphi_{\lambda,\xi}(\tau), R'(\varphi_{\lambda,\xi}(\tau))\right)_H}{\tau}
	\end{split}
\end{equation}
and
\begin{equation} \label{eq:unif12}
	\begin{split}
		\left|\ii{0}{t}{\left( -\Delta\varphi_{\lambda,\xi}(\tau),
			R'(\varphi_{\lambda,\xi}(\tau))\right)_H}{\tau} \right| &
		\leq  C_R\int_0^t\|\nabla \varphi_{\lambda,\xi}(\tau)\|^2_\bH\,\d \tau \\
		& \leq \varepsilon_1\int_0^t
		\|\Delta \varphi_{\lambda,\xi}(\tau)\|^2_H\,\d\tau
		+ C\int_0^t\|\varphi_{\lambda,\xi}(\tau)\|^2_H\,\d \tau,
	\end{split}
\end{equation}
where $\varepsilon_1 > 0$ is arbitrary and $C$ depends only on $C_R$ and $\varepsilon_1$. Also, recalling Lemma \ref{lem:lipschitz}, we have
\begin{equation} \label{eq:unif13}
	\|K_{\lambda,\xi}(\varphi_{\lambda,\xi}(\tau))\|^2_{\cL^2(U, H)} \leq \varepsilon_1\|\Delta \varphi_{\lambda,\xi}(\tau)\|^2_H + C\|\varphi_{\lambda,\xi}(\tau)\|^2_H,
\end{equation}
where $C$ depends only on $\varepsilon_1$.
Finally, observe that by monotonicity of $F'_\lambda$
\[
\begin{split}
	0 & \leq \int_{\Td} \int_{\Td} (\Psi'_\lambda(\varphi_{\lambda,\xi}(x))-\Psi'_\lambda(\varphi_{\lambda,\xi}(y)))(\varphi_{\lambda,\xi}(x)-\varphi_{\lambda,\xi}(y)) \: \d x \d y \\
	& = 2|\Td|\int_{\Td}\varphi_{\lambda,\xi}(x)\Psi'_\lambda(\varphi_{\lambda,\xi}(x)) \: \d x - 2|\Td|^2 \overline{\Psi'_\lambda(\varphi_{\lambda,\xi})}\overline{\varphi_{\lambda,\xi}} \\
	& = 2|\Td|^2\left( \overline{\Psi'_\lambda(\varphi_{\lambda,\xi})\varphi_{\lambda,\xi}} - \overline{\Psi'_\lambda(\varphi_{\lambda,\xi})}\overline{\varphi_{\lambda,\xi}}\right)
\end{split}
\]
implying in turn that
\begin{equation} \label{eq:unif13_bis}
	\begin{split}
		\int_0^t (F'_\lambda(\varphi_{\lambda,\xi}(\tau)) - \overline{F'_\lambda(\varphi_{\lambda,\xi}(\tau))}, \varphi_{\lambda,\xi}(\tau))_H \: \d \tau \geq \int_0^t (R'_\lambda(\varphi_{\lambda,\xi}(\tau)) - \overline{R'_\lambda(\varphi_{\lambda,\xi}(\tau))}, \varphi_{\lambda,\xi}(\tau))_H \: \d \tau,
	\end{split}
\end{equation}
and
\begin{equation} \label{eq:unif13_ter}
	\begin{split}
		\left| \int_0^t (R'_\lambda(\varphi_{\lambda,\xi}(\tau)) - \overline{R'_\lambda(\varphi_{\lambda,\xi}(\tau))}, \varphi_{\lambda,\xi}(\tau))_H \right| & \leq C_RC_P\int_0^t\|\nabla \varphi_{\lambda,\xi}(\tau)\|_\bH\|\varphi_{\lambda,\xi}(\tau)\|_H \: \d \tau \\
		& \leq \dfrac 12 \int_0^t\|\nabla \varphi_{\lambda,\xi}(\tau)\|_\bH^2 \: \d\tau + C\int_0^t\|\varphi_{\lambda,\xi}(\tau)\|_H^2 \: \d \tau.
	\end{split}
\end{equation}
Let $\varepsilon_1 = \frac{\alpha}{4}$. Exploiting \eqref{eq:unif11}-\eqref{eq:unif13_ter} in \eqref{eq:unif10}, taking the supremum over $[0,T]$, then raising the inequality to the power $\frac p2$ and finally taking expectations with respect to $\P$, we arrive at
\begin{multline} \label{eq:unif14}
	\E\supp\|\varphi_{\lambda,\xi}(\tau)\|_{H}^p + \E \left| \int_0^t \|\Delta \varphi_{\lambda,\xi}(\tau)\|^2_H \:\d\tau \right|^\frac p2 + \E \left| \int_0^t (\Psi_\lambda''(\varphi_{\lambda,\xi}(\tau)), |\nabla \varphi_{\lambda,\xi}(\tau)|^2)_H \: \mathrm{d}\tau \right|^\frac p2 \\ \leq C\left[ \E\|\varphi_{0}\|_{H}^p + \E \sups\left| \int_0^s  \left(\varphi_{\lambda,\xi}(\tau), K_{\lambda,\xi}(\varphi_{\lambda,\xi}(\tau))\,\mathrm{d}W(\tau)\right)_H \right|^\frac p2 + \E\left|\int_0^t \|\varphi_{\lambda,\xi}(\tau)\|^2_H \: \mathrm{d}\tau \right|^\frac p2 \right],
\end{multline}
where the constant $C$ may depend on $p$. Finally, we employ the Burkholder-Davis-Gundy inequality to show that
\begin{align}
	\nonumber
	&\E \sups\left| \int_0^s  \left(\varphi_{\lambda,\xi}(\tau), K_{\lambda,\xi}(\varphi_{\lambda,\xi}(\tau))\,\mathrm{d}W(\tau)\right)_H \right|^\frac p2  \\
	\nonumber
	& \qquad\leq C \E \left| \int_0^t  \|\varphi_{\lambda,\xi}(\tau)\|^2_H\| K_{\lambda,\xi}(\varphi_{\lambda,\xi}(\tau))\|^2_{\cL^2(U,H)}\:\d\tau \right|^\frac p4 \\
	\nonumber
	& \qquad \leq C\E \left| \supp \|\varphi_{\lambda,\xi}(\tau)\|_H^2\int_0^t \| K_{\lambda,\xi}(\varphi_{\lambda,\xi}(\tau))\|^2_{\cL^2(U,H)}\:\d\tau \right|^\frac p4 \\
	\nonumber
	& \qquad \leq C\E\left[ \supp \|\varphi_{\lambda,\xi}(\tau)\|_H^\frac p2 \left|\int_0^t  \| \varphi_{\lambda,\xi}(\tau)\|_V^2\:\d\tau \right|^\frac p4 \right] \\
	\nonumber
	& \qquad
	\leq C\left[ \E\supp  \|\varphi_{\lambda,\xi}(\tau)\|^p_H \right]^\frac 12 \left[\E \left|\int_0^t  \| \varphi_{\lambda,\xi}(\tau)\|_V^2\:\d\tau \right|^\frac p2 \right]^\frac 12 \\
	& \qquad \leq \dfrac{1}{4} \E\supp  \|\varphi_{\lambda,\xi}(\tau)\|_H^p + \dfrac{1}{4}\E \left| \int_0^t \|\Delta \varphi_{\lambda,\xi}(\tau)\|^2_H \:\d\tau\right|^\frac p2 + C\E\left|\int_0^t \| \varphi_{\lambda,\xi}(\tau)\|^2_H \:\d\tau\right|^\frac p2.
	\label{eq:unif15}
\end{align}
On account of the convexity of $\Psi_\lambda$, the estimates \eqref{eq:unif14} and \eqref{eq:unif15} yield
\begin{multline*} \label{eq:unif16}
	\E\supp\|\varphi_{\lambda,\xi}(\tau)\|_{H}^p + \E \left| \int_0^t \|\Delta \varphi_{\lambda,\xi}(\tau)\|^2_H \:\d\tau \right|^\frac p2 + \E \left| \int_0^t (\Psi_\lambda''(\varphi_{\lambda,\xi}(\tau)), |\nabla \varphi_{\lambda,\xi}(\tau)|^2)_H \: \mathrm{d}\tau \right|^\frac p2 \\ \leq C\left[ \E\|\varphi_{0}\|_{H}^p+ \E\left|\int_0^t \|\varphi_{\lambda,\xi}(\tau)\|^2_H \: \mathrm{d}\tau \right|^\frac p2 \right].
\end{multline*}
The Gronwall lemma implies that there exists a constant $C_1$ independent of $\lambda$ and $\xi$ such that
\begin{equation} \label{eq:unif1}
	\|\varphi_{\lambda,\xi}\|_{L^p_\cP(\Omega;C^0([0,T];H))} + \|\sqrt{\Psi''_\lambda(\varphi_{\lambda,\xi})}\nabla \varphi_{\lambda,\xi}\|_{L^p_\cP(\Omega; L^2(0,T;H))} + \|\varphi_{\lambda,\xi}\|_{L^p_\cP(\Omega; L^2(0,T;H^2(\Td)))} \leq C_1.
\end{equation}
\paragraph{\textit{Second estimate.}} It is apparent that the approximating problem \eqref{eq:strongformreg} does not have the mass conservation property. This is easily seen taking $\psi \equiv 1$ in \eqref{eq:weakformreg}, yielding
\begin{equation} \label{eq:unif20}
	\overline{\varphi_{\lambda,\xi}(t)} = \overline{\varphi_0} + \int_0^t \overline{K_{\lambda,\xi}(\varphi_{\lambda,\xi}(\tau))} \:\d W(\tau),
\end{equation}
so that mass is only conserved under expectations. In order to make sense of \eqref{eq:unif20}, we clarify that, given $L \in \cL^2(U, H)$, we set
\[
\overline{L} \in \cL^2(U,\erre), \qquad \overline{L}[u_k] = \overline{L[u_k]} \qquad \forall \: k \in \enne_+.
\] Observe that, since
\[
\overline{\b{g}'_k(J_\lambda(\varphi_{\lambda,\xi}(\tau))) \cdot \nabla J_\lambda(\varphi_{\lambda,\xi}(\tau))} = \dfrac{1}{|\Omega|}(\div \b{g}_k(J_\lambda(\varphi_{\lambda,\xi}(\tau))), 1)_H =  0,
\]
setting
\[
I_\lambda = \div [\b{G} \circ J_\lambda],
\]
we easily infer from \eqref{eq:unif20} and the Burkh\"{o}lder--Davis--Gundy inequality that
\begin{align}
	\nonumber
	\E \supp |\overline{\varphi_{\lambda,\xi}(\tau)} - \overline{\varphi_0}|^p  &\leq C\E \left[  \int_0^t \|\overline{K_{\lambda,\xi}(\varphi_{\lambda,\xi})} - \overline{I_\lambda(\varphi_{\lambda,\xi})} \|^2_{\cL^2(U, \erre)} \: \d \tau \right]^\frac p2 \\ & = C\E \left[  \int_0^t \sum_{k  = 1}^{+\infty}  |\overline{\b{g}'_k(J_\lambda(\varphi_{\lambda,\xi}(\tau))) \cdot \left( \nabla \mathcal{R}_\xi\varphi_{\lambda,\xi}(\tau) - \nabla J_\lambda(\varphi_{\lambda,\xi}(\tau)) \right)}|^2 \: \d \tau \right]^\frac p2.
	\label{eq:unif21}
\end{align}
Next, we investigate two useful auxiliary estimates. First,
\[
\begin{split}
	\sum_{k  = 1}^{+\infty} |\overline{\b{g}'_k(J_\lambda(\varphi_{\lambda,\xi}(\tau)))
		\cdot \left( \nabla \mathcal{R}_\xi\varphi_{\lambda,\xi}(\tau)
		- \nabla \varphi_{\lambda,\xi}(\tau) \right)}|^2
	& = \dfrac{1}{|\Td|^2}\sum_{k  = 1}^{+\infty} \left(\b{g}'_k(J_\lambda(\varphi_{\lambda,\xi}(\tau))),
	\xi \nabla \Delta \mathcal{R}_\xi \varphi_{\lambda,\xi}(\tau) \right)_H^2 \\
	& \leq \dfrac{L_\b{G}^2 \xi^2}{{|\Td|^2}}\|\nabla \Delta\mathcal{R}_\xi \varphi_{\lambda,\xi}\|^2_{\bH} \\
	& \leq \dfrac{L_\b{G}^2\lambda}{2|\Td|^2}\|\Delta \varphi_{\lambda,\xi}\|_{H}^2,
\end{split}
\]
where the last passage is justified by taking the $L^2$-product of \eqref{eq:elliptic} by $\Delta^2\mathcal{R}_\xi\varphi_{\lambda,\xi} \in H$
and exploiting the fact that $\xi < \lambda$. Indeed, using the Cauchy--Schwarz inequality we obtain
\[
\xi\|\nabla \Delta\mathcal{R}_\xi \varphi_{\lambda,\xi}\|^2_{\bH} + \|\Delta \mathcal{R}_\xi \varphi_{\lambda,\xi}\|^2_H \leq \|\Delta \varphi_{\lambda,\xi}\|_H \| \Delta \mathcal{R}_\xi\varphi_{\lambda,\xi}\|_H,
\]
implying by the Young inequality
\begin{equation} \label{eq:ellipticestimate}
	\xi\|\nabla \Delta\mathcal{R}_\xi \varphi_{\lambda,\xi}\|^2_{\bH} \leq \dfrac 12 \|\Delta \varphi_{\lambda,\xi}\|_H^2.
\end{equation}
Secondly, we have
\[
\begin{split}
	\sum_{k  = 1}^{+\infty} |\overline{\b{g}'_k(J_\lambda(\varphi_{\lambda,\xi}(\tau)))
		\cdot \left( \nabla \varphi_{\lambda,\xi}(\tau)
		- \nabla J_\lambda(\varphi_{\lambda,\xi}(\tau)) \right)}|^2
	& = \dfrac{1}{|\Td|^2}\sum_{k  = 1}^{+\infty} \left(\b{g}'_k(J_\lambda(\varphi_{\lambda,\xi}(\tau))), \lambda \nabla \Psi_\lambda'(\varphi_{\lambda,\xi}) \right)_H^2 \\
	& = \dfrac{\lambda^2}{|\Td|^2}\sum_{k  = 1}^{+\infty} \left(\b{g}'_k(J_\lambda(\varphi_{\lambda,\xi}(\tau))), \Psi_\lambda''(\varphi_{\lambda,\xi})\nabla \varphi_{\lambda,\xi} \right)_H^2 \\
	& \leq \dfrac{L_\b{G}^2 \lambda^2}{|\Td|}
	\sup_{s \in \erre} \Psi_\lambda''(s)\|\sqrt{ \Psi''_\lambda(\varphi_{\lambda,\xi})}\nabla \varphi_{\lambda,\xi}\|^2_{H} \\
	& = \dfrac{L_\b{G}^2\lambda}{|\Td|}
	\|\sqrt{ \Psi''_\lambda(\varphi_{\lambda,\xi})}\nabla \varphi_{\lambda,\xi}\|^2_{H},
\end{split}
\]
where the last line follows from the Lipschitz continuity of $\Psi'_\lambda$ and \eqref{eq:unif1}. Therefore,
summing and subtracting $\nabla \varphi_{\lambda,\xi}(\tau)$ in the right hand side of \eqref{eq:unif21}, we end up with the estimate
\begin{equation*}
	\E \supp |\overline{\varphi_{\lambda,\xi}(\tau)} - \overline{\varphi_0}|^p \leq C \lambda^\frac p2 \left[ 1 + \E \left| \int_0^t \|\Delta \varphi_{\lambda,\xi}(\tau)\|^2_H  \: \d\tau \right|^\frac p2 + \E \left| \int_0^t \|\sqrt{ \Psi''_\lambda(\varphi_{\lambda,\xi})}\nabla \varphi_{\lambda,\xi}\|^2_{H}  \: \d\tau \right|^\frac p2 \right],
\end{equation*}
implying, by \eqref{eq:unif1}, that
\begin{equation} \label{eq:unif2}
	\E \supp |\overline{\varphi_{\lambda,\xi}(\tau)} - \overline{\varphi_0}|^p \leq C_2 \lambda^\frac p2,
\end{equation}
for some constant $C_2 > 0$ independent of $\lambda$ and $\xi$, but possibly depending on $p$.
\begin{remark}
	Observe that \eqref{eq:unif2} implies asymptotic conservation of mass, as expected from the original problem. Notice also that estimate \eqref{eq:unif2} can be refined, owing to Assumption \ref{hyp:G}, in such a way that the left hand side is proportional to $\lambda^p$ (however, we also need $\xi < \lambda^2$). Nonetheless, the minimal needed decay rate for the mass gap is the one given by \eqref{eq:unif2}.
\end{remark}
\paragraph{\textit{Third estimate.}} The It\^{o} formula for twice-differentiable functionals, in its classical version given, for instance, in \cite[Theorem 4.32]{dapratozab}, can be applied
to the functional
\[
t \mapsto \dfrac{\alpha}{2}\|\nabla \mathcal{N}_{\alpha\beta}(\varphi_{\lambda,\xi}(t)- \overline{\varphi_{\lambda,\xi}(t)})\|_\bH^2 + \dfrac{\beta}{2}\| \mathcal{N}_{\alpha\beta}(\varphi_{\lambda,\xi}(t)- \overline{\varphi_{\lambda,\xi}(t)})\|_H^2,
\]
resulting in
\begin{multline}
	\dfrac{\alpha}{2}\|\nabla \mathcal{N}_{\alpha\beta}(\varphi_{\lambda,\xi}(t) - \overline{\varphi_{\lambda,\xi}(t)})\|_{H}^2 + \dfrac{\beta}{2}\| \mathcal{N}_{\alpha\beta}(\varphi_{\lambda,\xi}(t)- \overline{\varphi_{\lambda,\xi}(t)})\|_H^2 \\
	+ \int_0^t \left[ \|\nabla \varphi_{\lambda,\xi}(\tau)\|^2_H + \left( \varphi_{\lambda,\xi}(\tau) - \overline{\varphi_{\lambda,\xi}(\tau)}, F'_\lambda(\varphi_{\lambda,\xi}(\tau))\right)_H \right] \: \mathrm{d}\tau \\
	= \dfrac{\alpha}{2}\|\nabla \mathcal{N}_{\alpha\beta}(\varphi_0 - \overline{\varphi_0})\|_{H}^2 + \dfrac{\beta}{2}\| \mathcal{N}_{\alpha\beta}(\varphi_0 - \overline{\varphi_0})\|_{H}^2 \\ + \int_0^t \left(\mathcal{N}_{\alpha\beta}(\varphi_{\lambda,\xi}(\tau) - \overline{\varphi_{\lambda,\xi}(\tau)}), [K_{\lambda,\xi}(\varphi_{\lambda,\xi}(\tau)) - \overline{K_{\lambda,\xi}(\varphi_{\lambda,\xi}(\tau))}]\,\mathrm{d}W(\tau)\right)_H  \\
	+ \dfrac{\alpha}{2}\int_0^t \|\nabla \mathcal{N}_{\alpha\beta}( K_{\lambda,\xi}(\varphi_{\lambda,\xi}(\tau)) - \overline{K_{\lambda,\xi}(\varphi_{\lambda,\xi}(\tau))})\|^2_{\cL^2(U, H)} \: \mathrm{d}\tau\\
	+ \dfrac{\beta}{2}\int_0^t \| \mathcal{N}_{\alpha\beta}( K_{\lambda,\xi}(\varphi_{\lambda,\xi}(\tau)) - \overline{K_{\lambda,\xi}(\varphi_{\lambda,\xi}(\tau))})\|^2_{\cL^2(U, H)} \: \mathrm{d}\tau.
	\label{eq:unif30}
\end{multline}
Let us address the terms appearing in \eqref{eq:unif30} one by one. First, observe that by the Poincaré and Young inequalities
\begin{equation} \label{eq:unif31}
	\begin{split}
		|(R'(\varphi_{\lambda,\xi}(\tau)), \varphi_{\lambda,\xi}(\tau) - \overline{\varphi_{\lambda,\xi}(\tau)})_H| & \leq C_R\|\varphi_{\lambda,\xi}(\tau)\|_H \|\varphi_{\lambda,\xi}(\tau) - \overline{\varphi_{\lambda,\xi}(\tau)}\|_H  \\
		& \leq \varepsilon_2 \|\nabla \varphi_{\lambda,\xi}(\tau)\|^2_\bH + C\|\varphi_{\lambda,\xi}(\tau)\|_\sharp^2,
	\end{split}
\end{equation}
where $\varepsilon_2 > 0$ is arbitrary and $C$ depends on $\varepsilon_2$.
Next, we have by the Jensen inequality that
\begin{align}
	\nonumber
	& \|\nabla \mathcal{N}_{\alpha\beta}( K_{\lambda,\xi}(\varphi_{\lambda,\xi}(\tau)) - \overline{K_{\lambda,\xi}(\varphi_{\lambda,\xi}(\tau)})\|^2_{\cL^2(U, H)}
	+
	\| \mathcal{N}_{\alpha\beta}( K_{\lambda,\xi}(\varphi_{\lambda,\xi}(\tau)) - \overline{K_{\lambda,\xi}(\varphi_{\lambda,\xi}(\tau)})\|^2_{\cL^2(U, H)} \\
	\nonumber
	& \qquad= \sum_{k  = 1}^{+\infty} \|\nabla \mathcal{N}_{\alpha\beta}(\b{g}'_k(J_\lambda(\varphi_{\lambda,\xi}(\tau))) \cdot \nabla \mathcal{R}_\xi\varphi_{\lambda,\xi}(\tau) - \overline{\b{g}'_k(J_\lambda(\varphi_{\lambda,\xi}(\tau))) \cdot \nabla \mathcal{R}_\xi\varphi_{\lambda,\xi}(\tau)})\|^2_H \\
	\nonumber
	&\qquad\qquad
	+\sum_{k  = 1}^{+\infty} \| \mathcal{N}_{\alpha\beta}(\b{g}'_k(J_\lambda(\varphi_{\lambda,\xi}(\tau))) \cdot \nabla \mathcal{R}_\xi\varphi_{\lambda,\xi}(\tau) - \overline{\b{g}'_k(J_\lambda(\varphi_{\lambda,\xi}(\tau))) \cdot \nabla \mathcal{R}_\xi\varphi_{\lambda,\xi}(\tau)})\|^2_H\\
	\nonumber
	&\qquad\leq C \sum_{k  = 1}^{+\infty} \|\b{g}'_k(J_\lambda(\varphi_{\lambda,\xi}(\tau))) \cdot \nabla \mathcal{R}_\xi\varphi_{\lambda,\xi}(\tau) - \overline{\b{g}'_k(J_\lambda(\varphi_{\lambda,\xi}(\tau))) \cdot \nabla \mathcal{R}_\xi\varphi_{\lambda,\xi}(\tau)}\|^2_H \\
	\nonumber
	& \qquad
	\leq 2C\sum_{k  = 1}^{+\infty}
	\left[\|\b{g}'_k(J_\lambda(\varphi_{\lambda,\xi}(\tau))) \cdot \nabla \mathcal{R}_\xi\varphi_{\lambda,\xi}(\tau)\|^2_H
	+ |\Td||\overline{\b{g}'_k(J_\lambda(\varphi_{\lambda,\xi}(\tau)))
		\cdot \nabla \mathcal{R}_\xi\varphi_{\lambda,\xi}(\tau)}|^2\right]
	\\
	\nonumber
	& \qquad
	\leq 4C\sum_{k  = 1}^{+\infty}
	\|\b{g}'_k(J_\lambda(\varphi_{\lambda,\xi}(\tau))) \cdot \nabla \mathcal{R}_\xi\varphi_{\lambda,\xi}(\tau)\|^2_H
	\\
	\nonumber
	& \qquad \leq 4C \sum_{k  = 1}^{+\infty}
	\|\b{g}'_k\|^2_{\b{L}^\infty(-1,1)}
	\|\nabla\mathcal{R}_\xi\varphi_{\lambda,\xi}(\tau)\|^2_H  \\
	& \qquad
	\leq 4CL_{\b G}^2\|\nabla\mathcal{R}_\xi\varphi_{\lambda,\xi}(\tau)\|^2_H
	\leq 4CL_{\b G}\| \varphi_{\lambda,\xi}(\tau)\|^2_{V}.
	\label{eq:unif32}
\end{align}
Observe then that
\begin{align}
	\nonumber
	& \left( \varphi_{\lambda,\xi}(\tau) - \overline{\varphi_{\lambda,\xi}(\tau)}, \Psi'_\lambda(\varphi_{\lambda,\xi}(\tau))\right)_H \\
	\nonumber
	&\qquad= \left( \varphi_{\lambda,\xi}(\tau) - J_\lambda(\varphi_{\lambda,\xi}(\tau)), \Psi'_\lambda(\varphi_{\lambda,\xi}(\tau))\right)_H + \left(J_\lambda(\varphi_{\lambda,\xi}(\tau)),
	\Psi'_\lambda(\varphi_{\lambda,\xi}(\tau))\right)_H \\
	& \qquad\qquad+ \left( \overline{\varphi_{0}} - \overline{\varphi_{\lambda, \xi}(\tau)}, \Psi'_\lambda(\varphi_{\lambda,\xi}(\tau))\right)_H - \left( \overline{\varphi_{0}}, \Psi'_\lambda(\varphi_{\lambda,\xi}(\tau))\right)_H.
	\label{eq:unif33}
\end{align}
Among the terms to the right hand side of \eqref{eq:unif33}, we have
\[
\left( \varphi_{\lambda,\xi}(\tau) - J_\lambda(\varphi_{\lambda,\xi}(\tau)), \Psi'_\lambda(\varphi_{\lambda,\xi}(\tau))\right)_H  = \lambda \|\Psi'_\lambda(\varphi_{\lambda, \xi}(\tau))\|^2_H,
\]
and besides, owing to the Fenchel--Young inequality
\[
\left(J_\lambda(\varphi_{\lambda,\xi}(\tau)),\Psi'_\lambda(\varphi_{\lambda,\xi}(\tau))\right)_H = \Psi(J_\lambda(\varphi_{\lambda, \xi}(\tau))) + \Psi^*(\Psi'_\lambda(\varphi_{\lambda,\xi}(\tau)))
\]
where $\Psi^*$ is the convex conjugate of $\Psi$. Observe that equality holds since $\Psi'_\lambda(\varphi_{\lambda, \xi}(\tau)) = \Psi'(J_\lambda(\varphi_{\lambda, \xi}(\tau)))$ is the only element of the subdifferential $\partial\Psi(J_\lambda(\varphi_{\lambda, \xi}(\tau)))$. Moreover, by the same result
\[
\left( \overline{\varphi_{0}}, \Psi'_\lambda(\varphi_{\lambda,\xi}(\tau))\right)_H \leq \Psi(\overline{\varphi_0}) + \Psi^*(\Psi'_\lambda(\varphi_{\lambda,\xi}(\tau))).
\]
Considering back \eqref{eq:unif30}, taking \eqref{eq:unif31}-\eqref{eq:unif33} as well as the previous relations into account, adding $\frac 12|\overline{\varphi_{\lambda,\xi}(t)}|^2$ to both sides, raising the resulting inequality to the power $\frac p2$, then taking suprema in time and $\P$-expectations, we arrive at
\begin{align}
	\nonumber
	&\E \supp \|\varphi_{\lambda, \xi}(\tau)\|_{\sharp}^p
	+ \E \left|  \int_0^T \|\nabla \varphi_{\lambda, \xi}(\tau)\|^2_H \:\d \tau \right|^\frac p2 \\
	\nonumber
	&\qquad+ \lambda^\frac p2\E \left|  \int_0^T
	\|\Psi'_\lambda(\varphi_{\lambda, \xi}(\tau))\|^2_H \:\d \tau \right|^\frac p2
	+\E \left|  \int_0^T
	\|\Psi(J_\lambda(\varphi_{\lambda, \xi}(\tau)))\|_{L^1(\T^d)} \:\d \tau \right|^\frac p2\\
	\nonumber
	&\leq C \left[ 1 + \E \|\varphi_0\|_{\sharp}^p
	+\E|\Psi(\overline{\varphi_0})|^{\frac p2}
	+ \E\supp |\overline{\varphi_{\lambda, \xi}(\tau)}
	- \overline{\varphi_{0}}|^\frac p2 \left| \int_0^T
	\|\Psi'_\lambda(\varphi_{\lambda,\xi}(\tau))\|_H  \: \d\tau \right|^\frac p2\right.\\
	\nonumber
	&\qquad\left.
	+ \E \sups \left| \int_0^s \left(\mathcal{N}_{\alpha\beta}(\varphi_{\lambda, \xi}(\tau)
	- \overline{\varphi_{\lambda, \xi}(\tau)}),
	[K_{\lambda,\xi}(\varphi_{\lambda,\xi}(\tau))
	- \overline{K_{\lambda,\xi}(\varphi_{\lambda,\xi}(\tau))}]
	\,\mathrm{d}W(\tau)\right)_H \right|^\frac p2 \right. \\
	&\qquad\left. + \E \left| \int_0^T \|\varphi_{\lambda, \xi}(\tau)\|^2_V \: \d \tau
	\right|^\frac p2 + \E \left| \int_0^T
	\|\varphi_{\lambda, \xi}(\tau)\|^2_\sharp\: \mathrm{d}\tau\right|^\frac p2
	+ \E \supp \|\varphi_{\lambda, \xi}(\tau)\|^p_H \right]
	\label{eq:unif34}
\end{align}
The remaining terms can be controlled as follows. First, by \eqref{eq:unif2}, we have	
\begin{align}
	\nonumber
	& \E\left[
	\supp |\overline{\varphi_{\lambda, \xi}(\tau)} - \overline{\varphi_{0}}|^\frac p2 \left| \int_0^T \|\Psi'_\lambda(\varphi_{\lambda,\xi}(\tau))\|_H  \: \d\tau \right|^\frac p2\right] \\
	\nonumber	
	& \qquad\leq \left[ \E\supp |\overline{\varphi_{\lambda, \xi}(\tau)} - \overline{\varphi_{0}}|^p \right]^\frac 12 \left[ \E \left| \int_0^T \|\Psi'_\lambda(\varphi_{\lambda,\xi}(\tau))\|_H  \: \d\tau \right|^p \right]^\frac 12 \\
	\label{eq:unif35}
	& \qquad\leq C \lambda^\frac p4\left[ \E \left| \int_0^T \|\Psi'_\lambda(\varphi_{\lambda,\xi}(\tau))\|_H^2  \: \d\tau \right|^{\frac p2} \right]^\frac 12
	\leq \dfrac{C^2}{2} +  \dfrac{\lambda^\frac p2}{2}\E \left| \int_0^T \|\Psi'_\lambda(\varphi_{\lambda,\xi}(\tau))\|_H^2  \: \d\tau \right|^\frac{p}{2},	
\end{align}
where we also exploited the embedding $L^2(0,T;H) \embed L^1(0,T;H)$. In particular, the constant $C$ appearing in \eqref{eq:unif35} also depends on $T$. The Burkholder--Davis--Gundy inequality and \eqref{eq:unif32} entail that
\begin{align}
	\nonumber
	& \E \sups \left| \int_0^s \left(\mathcal{N}_{\alpha\beta}(\varphi_{\lambda, \xi}(\tau) - \overline{\varphi_{\lambda, \xi}(\tau)}), [K_{\lambda,\xi}(\varphi_{\lambda,\xi}(\tau)) - \overline{K_{\lambda,\xi}(\varphi_{\lambda,\xi}(\tau))}]\,\mathrm{d}W(\tau)\right)_H \right|^\frac p2 \\
	\nonumber
	& \hspace{1cm} \leq C\E \left| \int_0^t \|\mathcal{N}_{\alpha\beta}(\varphi_{\lambda, \xi}(\tau) - \overline{\varphi_{\lambda, \xi}(\tau)})\|^2_H\| K_{\lambda,\xi}(\varphi_{\lambda,\xi}(\tau)) - \overline{K_{\lambda,\xi}(\varphi_{\lambda,\xi}(\tau))}\|^2_{\cL^2(U,H)} \:\d\tau \right|^\frac p4 \\
	\nonumber
	& \hspace{1cm} \leq C\E\left[ \supp \|\mathcal{N}_{\alpha\beta}(\varphi_{\lambda, \xi}(\tau) - \overline{\varphi_{\lambda, \xi}(\tau)})\|^\frac{p}{2}_H \left| \int_0^t \| K_{\lambda,\xi}(\varphi_{\lambda,\xi}(\tau)) - \overline{K_{\lambda,\xi}(\varphi_{\lambda,\xi}(\tau))}\|^2_{\cL^2(U,H)} \:\d\tau \right|^\frac p4\right] \\
	\nonumber
	& \hspace{1cm} \leq C\E \left[\supp \|\mathcal{N}_{\alpha\beta}(\varphi_{\lambda, \xi}(\tau) - \overline{\varphi_{\lambda, \xi}(\tau)})\|^\frac{p}{2}_H \left| \int_0^t \| \varphi_{\lambda,\xi}(\tau)\|^2_{V} \:\d\tau \right|^\frac p4\right] \\
	\nonumber
	& \hspace{1cm} \leq C\left[ \E  \supp \|\mathcal{N}_{\alpha\beta}(\varphi_{\lambda, \xi}(\tau) - \overline{\varphi_{\lambda, \xi}(\tau)})\|^p_H \right]^\frac12 \left[ \E\left| \int_0^t \| \varphi_{\lambda,\xi}(\tau)\|^2_{V} \:\d\tau \right|^\frac p2 \right]^\frac 12 \\
	& \hspace{1cm} \leq \varepsilon_2\E  \supp \|\varphi_{\lambda, \xi}(\tau)\|^p_{\sharp} + C\E\left| \int_0^t \| \varphi_{\lambda,\xi}(\tau)\|^2_{V} \:\d\tau \right|^\frac p2.
	\label{eq:unif36}
\end{align}
Set now $\varepsilon_2 = \frac 12$. Collecting \eqref{eq:unif35} and \eqref{eq:unif36} into \eqref{eq:unif34}, we have
\begin{multline*}
	\E \supp \|\varphi_{\lambda, \xi}(\tau)\|_{\sharp}^p + \E \left|  \int_0^T \|\nabla \varphi_{\lambda, \xi}(\tau)\|^2_H \:\d \tau \right|^\frac p2 + \lambda^\frac p2\E \left|  \int_0^T  \|\Psi'_\lambda(\varphi_{\lambda, \xi}(\tau))\|^2_H \:\d \tau \right|^\frac p2 \\ \leq
	C \left[ 1  + \E \left| \int_0^T \|\varphi_{\lambda, \xi}(\tau)\|^2_V \: \d \tau \right|^\frac p2 + \E \left| \int_0^T \|\varphi_{\lambda, \xi}(\tau)\|^2_\sharp\: \mathrm{d}\tau\right|^\frac p2 + \E \supp \|\varphi_{\lambda, \xi}(\tau)\|^p_H \right],
\end{multline*}
which exploiting \eqref{eq:unif1} becomes
\begin{multline*}
	\E \supp \|\varphi_{\lambda, \xi}(\tau)\|_{\sharp}^p + \E \left|  \int_0^T \|\nabla \varphi_{\lambda, \xi}(\tau)\|^2_H \:\d \tau \right|^\frac p2 + \lambda^\frac p2\E \left|  \int_0^T  \|\Psi'_\lambda(\varphi_{\lambda, \xi}(\tau))\|^2_H \:\d \tau \right|^\frac p2 \\ \leq
	C \left[ 1 + \E \left| \int_0^T \|\varphi_{\lambda, \xi}(\tau)\|^2_\sharp\: \mathrm{d}\tau\right|^\frac p2 \right],
\end{multline*}
and thus the Gronwall lemma implies the existence of a constant $C_3 > 0$ such that
\begin{equation} \label{eq:unif3}
	\sqrt \lambda \|\Psi'_\lambda(\varphi_{\lambda,\xi})\|_{L^p_\cP(\Omega; L^2(0,T;H))} \leq C_3.
\end{equation}
\paragraph{\textit{Fourth estimate.}} Now, we give a formal estimate of the $V$-norm of the chemical potential $\mu_{\lambda, \xi}$. This is usually a consequence of a standard argument (see, for instance, \cite{MZ04}). Here we adapt the argument to our problem. In order to ease notation, we omit the time variable. Multiplying the equation for $\mu_{\lambda, \xi}$ in \eqref{eq:strongformreg} by $J_\lambda(\varphi_{\lambda, \xi}) - \overline{J_\lambda(\varphi_{\lambda, \xi})}$ and applying the Poincaré inequality, we obtain the estimate
\begin{equation} \label{eq:unif40}
	(F_\lambda'(\varphi_{\lambda, \xi}), J_\lambda(\varphi_{\lambda, \xi}) - \overline{J_\lambda(\varphi_{\lambda, \xi})})_H + \|\sqrt{J_\lambda'(\varphi_{\lambda, \xi})}\nabla\varphi_{\lambda, \xi}\|^2_\bH \leq (\mu_{\lambda, \xi}, J_\lambda(\varphi_{\lambda, \xi}) - \overline{J_\lambda(\varphi_{\lambda, \xi})})_H.
\end{equation}
First, observe that using the Poincaré inequality yields
\begin{equation} \label{eq:unif41}
	\begin{split}
		(\mu_{\lambda, \xi}, J_\lambda(\varphi_{\lambda, \xi}) - \overline{J_\lambda(\varphi_{\lambda, \xi})})_H & = (\mu_{\lambda, \xi}- \overline{\mu_{\lambda, \xi}}, J_\lambda(\varphi_{\lambda, \xi}) - \overline{J_\lambda(\varphi_{\lambda, \xi})})_H \\
		& \leq C\|\nabla \mu_{\lambda, \xi}\|_\bH\|J_\lambda(\varphi_{\lambda, \xi})\|_H \\
		& \leq C\|\nabla \mu_{\lambda, \xi}\|_\bH
	\end{split}
\end{equation}
as $J_\lambda(\varphi_{\lambda, \xi}) \in \mathbf{B}^{0,\infty}_1$. Moreover, we have
\begin{equation} \label{eq:unif42}
	\begin{split}
		|(R'(\varphi_{\lambda, \xi}), J_\lambda(\varphi_{\lambda, \xi}) - \overline{J_\lambda(\varphi_{\lambda, \xi})})_H| & \leq C_R\|\varphi_{\lambda, \xi}\|_H\|J_\lambda(\varphi_{\lambda, \xi})\|_H \\
		& \leq C_R\|\varphi_{\lambda, \xi}\|_H
	\end{split}
\end{equation}
and
\begin{multline} \label{eq:unif43}
	(\Psi_\lambda'(\varphi_{\lambda,	 \xi}), J_\lambda(\varphi_{\lambda, \xi}) - \overline{J_\lambda(\varphi_{\lambda, \xi})})_H  = (\Psi_\lambda'(\varphi_{\lambda,	 \xi}), J_\lambda(\varphi_{\lambda, \xi}) - \overline{\varphi_0})_H + (\Psi_\lambda'(\varphi_{\lambda,	 \xi}), \overline{\varphi_0} - \overline{\varphi_{\lambda, \xi}})_H \\
	+ (\Psi_\lambda'(\varphi_{\lambda,	 \xi}), \overline{\varphi_{\lambda, \xi}} - \overline{J_\lambda(\varphi_{\lambda, \xi})})_H.
\end{multline}
Concerning the right hand side of \eqref{eq:unif43}, we have, recalling \cite{MZ04},
\begin{equation} \label{eq:unif44}
	(\Psi_\lambda'(\varphi_{\lambda,	 \xi}), J_\lambda(\varphi_{\lambda, \xi}) - \overline{\varphi_0})_H \geq C\|\Psi'_\lambda(\varphi_{\lambda,	 \xi})\|_{L^1(\Td)} - C
\end{equation}
and
\begin{equation} \label{eq:unif45}
	(\Psi_\lambda'(\varphi_{\lambda,	 \xi}), \overline{\varphi_{\lambda, \xi}} - \overline{J_\lambda(\varphi_{\lambda, \xi})})_H = \lambda|\overline{\Psi_\lambda'(\varphi_{\lambda,\xi})}|^2.
\end{equation}
Thus, exploiting \eqref{eq:unif41}-\eqref{eq:unif45} in \eqref{eq:unif40} we arrive at
\begin{multline} \label{eq:unif46}
	\|\Psi'_\lambda(\varphi_{\lambda,	 \xi})\|_{L^1(\Td)} + \lambda|\overline{\Psi_\lambda'(\varphi_{\lambda,\xi})}|^2 + \|\sqrt{J_\lambda'(\varphi_{\lambda, \xi})}\nabla\varphi_{\lambda, \xi}\|^2_\bH  \\ \leq C\left( 1 + \|\varphi_{\lambda, \xi}\|_H + \|\nabla \mu_{\lambda, \xi}\|_\bH + |(\Psi_\lambda'(\varphi_{\lambda,	 \xi}), \overline{\varphi_0} - \overline{\varphi_{\lambda, \xi}})_H\right)|.
\end{multline}
Therefore, observing that $\overline{\mu_{\lambda, \xi}} = \overline{F'_\lambda(\varphi_{\lambda, \xi})}$, we easily obtain from \eqref{eq:unif46} and Assumption \ref{hyp:potential} that there exists $C_4 > 0$ such that
\begin{align}
	\nonumber
	|\overline{\mu_\lambda}| & \leq |\overline{\Psi'_\lambda(\varphi_{\lambda, \xi})}| + |\overline{R'(\varphi_{\lambda, \xi})}| \\
	\nonumber
	& \leq C\left( 1 +  \|\Psi'_\lambda(\varphi_{\lambda,	 \xi})\|_{L^1(\Td)} + \|\varphi_{\lambda, \xi}\|_H \right) \\
	& \leq  C_4\left( 1 + \|\varphi_{\lambda, \xi}\|_H + \|\nabla \mu_{\lambda, \xi}\|_\bH + |(\Psi_\lambda'(\varphi_{\lambda,	 \xi}), \overline{\varphi_0} - \overline{\varphi_{\lambda, \xi}})_H|\right).
	\label{eq:unif4}
\end{align}
\paragraph{\textit{Fifth estimate.}} The previous estimates enable us to prove an energy inequality. In light of Remark \ref{rem:chemical}, the computations that follow are only formal. However, they can easily be made rigorous by means of a suitable approximation scheme. Considering the twice Frechét-differentiable regularized energy functional (see \cite{scarpa21})
\[
\mathcal{E}_\lambda : V \to \mathbb{R}, \qquad
\mathcal{E}_\lambda(v) := \frac12\|\nabla v \|^2_\b{H} + \ii{\Td}{}{F_\lambda(v)}{x}
\]
and applying the It\^{o} formula once again in its classical version (see \cite[Theorem 4.32]{dapratozab}), recalling that $D\mathcal{E}_\lambda(\varphi_{\lambda,	 \xi}) = \mu_{\lambda,\xi}$, yields
\begin{multline} \label{eq:unif50}
	\mathcal{E}_\lambda(\varphi_{\lambda,	 \xi}(t)) +
	\alpha\int_0^t \|\nabla \mu_{\lambda,	 \xi}(\tau)\|^2_\bH \:\d \tau +
	\beta\int_0^t \|\mu_{\lambda,	 \xi}(\tau)-
	\overline{\mu_{\lambda,	 \xi}(\tau)}\|^2_\bH \:\d \tau\\
	=  \mathcal{E}_\lambda(\varphi_0) + \int_0^t (\mu_{\lambda,	 \xi}(\tau), K_{\lambda,	 \xi}(\varphi_{\lambda,	 \xi}(\tau))\:\d W(\tau))_H
	+ \dfrac{1}{2}\int_0^t  \|\nabla K_{\lambda,	 \xi}(\varphi_{\lambda,	 \xi}(\tau))\|^2_\b{\cL^2(U,\bH)}
	\:\d \tau \\
	 + \dfrac{1}{2}\int_0^t\sum_{k  = 1}^{+\infty}\int_{\Td} F''_\lambda(\varphi_{\lambda,	 \xi}(\tau))|\b{g}'_k(J_\lambda(\varphi_{\lambda,	 \xi}(\tau))) \cdot \nabla \mathcal{R}_\xi(\varphi_{\lambda,	 \xi}(\tau))|^2  \:\d \tau.
\end{multline}
Let $\gamma \geq 0$ be arbitrary but fixed, and set
\[
\Lambda(t) := \gamma\int_0^t \|\varphi_{\lambda,\xi}(\tau)\|^2_{H^2(\Td)} \:\d\tau.
\]
The expression for the It\^{o} differential of the energy functional given by \eqref{eq:unif50} enables us to apply the It\^{o} product rule to $e^{-\Lambda(t)}\mathcal{E}_\lambda(\varphi_{\lambda, \xi}(t))$, resulting in
\begin{multline} \label{eq:unif51}
	e^{-\Lambda(t)}\mathcal{E}_\lambda(\varphi_{\lambda, \xi}(t))
	+ \gamma \int_0^t e^{-\Lambda(\tau)}
	\|\varphi_{\lambda, \xi}(\tau)\|^2_{H^2(\Td)}
	\mathcal{E}_\lambda(\varphi_{\lambda, \xi}(\tau))\:\d\tau
	+\alpha \int_0^t e^{-\Lambda(\tau)}\|\nabla \mu_{\lambda,	 \xi}(\tau)\|^2_\bH \:\d \tau \\
	\leq \mathcal{E}_\lambda(\varphi_0) + \int_0^t e^{-\Lambda(\tau)}(\mu_{\lambda,	 \xi}(\tau), K_{\lambda,	 \xi}(\varphi_{\lambda,	 \xi}(\tau))\:\d W(\tau))_H + \dfrac{1}{2}\int_0^t e^{-\Lambda(\tau)} \|\nabla K_{\lambda,	 \xi}(\varphi_{\lambda,	 \xi}(\tau))\|^2_\b{\cL^2(U,\bH)} \:\d\tau \\ + \dfrac 12\int_0^te^{-\Lambda(\tau)} \sum_{k  = 1}^{+\infty}\int_{\Td} F''_\lambda(\varphi_{\lambda,	 \xi}(\tau))|\b{g}'_k(J_\lambda(\varphi_{\lambda,	 \xi}(\tau))) \cdot \nabla \mathcal{R}_\xi(\varphi_{\lambda,	 \xi}(\tau))|^2 \:\d \tau.
\end{multline}
In particular, adding to both sides of \eqref{eq:unif51} the quantity
\[
\dfrac{1}{2C_4}\int_0^t e^{-\Lambda(\tau)}|\overline{\mu_{\lambda,	 \xi}(\tau)}|^2 \:\d\tau,
\]
recalling also that $v \mapsto (\|\nabla v\|^2_\bH + |\overline{v}|^2)^\frac 12$ is an equivalent norm in $V$, we are able to reconstruct the full $V$-norm of $\mu_{\lambda,	 \xi}$ to the left hand side of \eqref{eq:unif51}. Moreover, owing to \cite[Theorem A.2]{GMT2019} and the fact that $\Lambda$ is increasing, we have
\[
\int_0^t e^{-\Lambda(\tau)}\|\varphi_{\lambda, \xi}(\tau)\|^2_{W^{2,6}(\Td)} \:\d\tau + \int_0^t e^{-\Lambda(\tau)}\|\Psi'_\lambda(\varphi_{\lambda, \xi}(\tau))\|^2_{L^6(\Td)} \:\d\tau \leq C\left(1 + \int_0^t e^{-\Lambda(\tau)}\|\mu_{\lambda,	 \xi}(\tau)\|^2_V \:\d \tau\right).
\]
Hence, recalling also \eqref{eq:unif4}, \eqref{eq:unif50} becomes
\begin{align}
	\nonumber
	&e^{-\Lambda(t)}\mathcal{E}_\lambda(\varphi_{\lambda, \xi}(t))
	+ \gamma \int_0^t e^{-\Lambda(\tau)}
	\|\varphi_{\lambda, \xi}(\tau)\|^2_{H^2(\Td)}
	\mathcal{E}_\lambda(\varphi_{\lambda, \xi}(\tau))\:\d\tau \\
	\nonumber
	&\qquad+ \int_0^t e^{-\Lambda(\tau)}
	\left( \|\varphi_{\lambda, \xi}(\tau)\|^2_{W^{2,6}(\Td)}
	+ \|\Psi'_\lambda(\varphi_{\lambda, \xi}(\tau))\|^2_{L^6(\Td)}
	+ \|\mu_{\lambda,	 \xi}(\tau)\|^2_V\right) \:\d \tau \\
	\nonumber
	&\leq C\bigg[ 1 + \mathcal{E}_\lambda(\varphi_0)
	+ \int_0^t e^{-\Lambda(\tau)}
	(\mu_{\lambda,\xi}(\tau), K_{\lambda, \xi}(\varphi_{\lambda,\xi}(\tau))\:\d W(\tau))_H
	+ \int_0^t e^{-\Lambda(\tau)}
	\|\nabla K_{\lambda,	 \xi}(\varphi_{\lambda, \xi}(\tau))\|^2_\b{\cL^2(U,\bH)} \:\d\tau  \\
	\nonumber
	&\qquad+ \int_0^t e^{-\Lambda(\tau)} \|\varphi_{\lambda,\xi}(\tau)\|^2_H \:\d\tau
	+ \int_0^t e^{-\Lambda(\tau)}
	|(\Psi_\lambda'(\varphi_{\lambda, \xi}(\tau)),
	\overline{\varphi_0} - \overline{\varphi_{\lambda, \xi}(\tau)})_H|^2 \:\d\tau \\
	&\qquad+  \int_0^te^{-\Lambda(\tau)}
	\sum_{k  = 1}^{+\infty}\int_{\Td}
	F''_\lambda(\varphi_{\lambda, \xi}(\tau))
	|\b{g}'_k(J_\lambda(\varphi_{\lambda, \xi}(\tau)))
	\cdot \nabla \mathcal{R}_\xi\varphi_{\lambda, \xi}(\tau)|^2 \:\d \tau \bigg].
	\label{eq:unif52}
\end{align}
Next, we give controls for the terms to the right hand side of \eqref{eq:unif52}. First of all, recalling Lemma \ref{lem:resolvent} we have
\begin{equation*} 
	\begin{split}
		\|\nabla K_{\lambda,	 \xi}(\varphi_{\lambda,\xi}(\tau))\|^2_\b{\cL^2(U,\bH)} & =
		\sum_{k  = 1}^{+\infty} \|\nabla(\b{g}_k'(J_\lambda(\varphi_{\lambda,	 \xi}(\tau)))
		\cdot \nabla \mathcal{R}_\xi\varphi_{\lambda,	 \xi}(\tau))\|_\bH^2 \\
		& \leq 2\sum_{k  = 1}^{+\infty}
		\left[\|
		\left[ \b{g}_k''(J_\lambda(\varphi_{\lambda,	 \xi}(\tau))) \otimes
		\nabla J_\lambda(\varphi_{\lambda,\xi}(\tau)) \right]\nabla \mathcal{R}_\xi
		\varphi_{\lambda,	 \xi}(\tau))\|_\bH^2\right.  \\
		& \qquad \left.+ \|D^2\mathcal{R}_\xi\varphi_{\lambda,\xi}(\tau)
		\b{g}'_k(J_\lambda(\varphi_{\lambda,	 \xi}(\tau)))\|^2_\bH\right] \\
		& \leq 2L_\b{G}^2\left( \|D^2\mathcal{R}_\xi\varphi_{\lambda, \xi}(\tau)\|_\bH^2 + \|\nabla J_\lambda(\varphi_{\lambda,	 \xi}(\tau)) \cdot \nabla \mathcal{R}_\xi\varphi_{\lambda, \xi}(\tau)\|^2_H \right) \\
		& \leq C\left( \|\varphi_{\lambda, \xi}(\tau)\|_{H^2(\Td)}^2 + \|\nabla J_\lambda(\varphi_{\lambda,	 \xi}(\tau)) \cdot \nabla \mathcal{R}_\xi\varphi_{\lambda, \xi}(\tau)\|^2_H \right) \\
		& = C\left( \|\varphi_{\lambda, \xi}(\tau)\|_{H^2(\Td)}^2 + \ii{\Td}{}{J'_\lambda(\varphi_{\lambda,	 \xi}(\tau))\nabla J_\lambda(\varphi_{\lambda,	 \xi}(\tau)) \cdot \nabla \varphi_{\lambda,	 \xi}(\tau)|\nabla \mathcal{R}_\xi \varphi_{\lambda,	 \xi}(\tau)|^2}{x} \right).
	\end{split}
\end{equation*}
As $\nabla J_\lambda(\varphi_{\lambda,	 \xi}) \cdot \nabla \varphi_{\lambda,	 \xi} \geq 0$,
$0\leq J'_\lambda(\varphi_{\lambda,	 \xi}) \leq 1$,
and $|J_\lambda(\varphi_{\lambda,	 \xi})| \leq 1$,
we have, integrating by parts and
exploiting the Sobolev embedding $W^{2,6}(\Td) \embed W^{1,\infty}(\Td)$
\begin{align} 
	\nonumber
	\|\nabla K_{\lambda,	 \xi}(\varphi_{\lambda, \xi}(\tau))\|^2_\b{\cL^2(U,\bH)}  & \leq
	C\left( \|\varphi_{\lambda, \xi}(\tau)\|_{H^2(\Td)}^2 +
	\ii{\Td}{}{\nabla J_\lambda(\varphi_{\lambda,\xi}(\tau))
		\cdot \nabla \varphi_{\lambda,\xi}(\tau)
		|\nabla \mathcal{R}_\xi \varphi_{\lambda,\xi}(\tau)|^2}{x} \right) \\
	\nonumber
	& = C\bigg( \|\varphi_{\lambda, \xi}(\tau)\|_{H^2(\Td)}^2
	- \ii{\Td}{}{J_\lambda(\varphi_{\lambda,\xi}(\tau))
		\Delta\varphi_{\lambda,\xi}(\tau)|\nabla \mathcal{R}_\xi
		\varphi_{\lambda,\xi}(\tau)|^2}{x} \\
	\nonumber
	& \qquad-  \ii{\Td}{}{J_\lambda(\varphi_{\lambda,\xi}(\tau))
		\nabla \varphi_{\lambda,\xi}(\tau)\cdot
		\nabla|\nabla \mathcal{R}_\xi \varphi_{\lambda,\xi}(\tau)|^2 }{x} \bigg)\\
	\nonumber
	& \leq C\bigg( \|\varphi_{\lambda, \xi}(\tau)\|_{H^2(\Td)}^2
	+ \ii{\Td}{}{ |\Delta\varphi_{\lambda,\xi}(\tau)|
		|\nabla \mathcal{R}_\xi \varphi_{\lambda,\xi}(\tau)|^2}{x} \\
	\nonumber
	& \qquad+  \ii{\Td}{}{|D^2R_\xi\varphi_{\lambda,\xi}(\tau)|
		|\nabla \mathcal{R}_\xi \varphi_{\lambda,\xi}(\tau)|
		|\nabla \varphi_{\lambda,	 \xi}(\tau)|}{x} \bigg)\\
	\nonumber
	& \leq C\bigg( \|\varphi_{\lambda, \xi}(\tau)\|_{H^2(\Td)}^2
	+ \|\varphi_{\lambda,\xi}(\tau)\|_{H^2(\Td)}
	\|\nabla \varphi_{\lambda,\xi}(\tau)\|_\bH
	\|\nabla \varphi_{\lambda,\xi}(\tau)\|_{\b{L}^\infty(\Td)} \bigg) \\
	& \leq \varepsilon_3\|\varphi_{\lambda,\xi}(\tau)\|^2_{W^{2,6}(\Td)}
	+ C\|\varphi_{\lambda, \xi}(\tau)\|_{H^2(\Td)}^2
	\left( 1 + \|\nabla \varphi_{\lambda,\xi}(\tau)\|_\bH^2 \right),
	\label{eq:unif53}
\end{align}
where $\varepsilon_3 > 0$ is arbitrary and $C$ depends on $\varepsilon_3$. Then, we have, owing to the extra assumption \ref{iii} introduced at the beginning of Section~\ref{sec:existence},
\begin{align}
	\nonumber
	&\dfrac 12\int_0^te^{-\Lambda(\tau)} \sum_{k  = 1}^{+\infty}\int_{\Td} F''_\lambda(\varphi_{\lambda,	 \xi}(\tau))|\b{g}'_k(J_\lambda(\varphi_{\lambda,	 \xi}(\tau))) \cdot \nabla \mathcal{R}_\xi\varphi_{\lambda,	 \xi}(\tau)|^2\:\d\tau \\
	&\qquad
	\leq C\int_0^te^{-\Lambda(\tau)}
	\|\nabla \varphi_{\lambda,\xi}(\tau)\|_\bH^2 \:\d\tau.
	\label{eq:unif54}
\end{align}
Finally, we have
\begin{equation} \label{eq:unif54bis}
	\int_0^t e^{-\Lambda(\tau)} |(\Psi_\lambda'(\varphi_{\lambda,	 \xi}(\tau)), \overline{\varphi_0} - \overline{\varphi_{\lambda, \xi}(\tau)})_H|^2 \:\d\tau \leq C\sup_{\tau \in [0,t]}|\overline{\varphi_0} - \overline{\varphi_{\lambda, \xi}(\tau)}|^2\int_0^t e^{-\Lambda(\tau)}\|\Psi_\lambda'(\varphi_{\lambda,	 \xi}(\tau))\|^2_H \:\d\tau.
\end{equation}
Let $\varepsilon_3 = \frac{1}{2}$.
Observing that by property \ref{pty:belowbound} of the Yosida approximation
one has
\begin{align}
	\nonumber
	& \gamma \int_0^t e^{-\Lambda(\tau)}
	\|\varphi_{\lambda,\xi}(\tau)\|^2_{H^2(\Td)}
	\mathcal{E}_\lambda(\varphi_{\lambda, \xi}(\tau))\:\d\tau \\
	\nonumber
	& \geq \gamma \int_0^t e^{-\Lambda(\tau)}
	\|\varphi_{\lambda,\xi}(\tau)\|^2_{H^2(\Td)}
	\|\nabla \varphi_{\lambda, \xi}(\tau)\|^2_\bH\:\d\tau
	+ \frac\gamma{M} \int_0^t e^{-\Lambda(\tau)}
	\|\varphi_{\lambda, \xi}(\tau)\|^2_{H^2(\Td)}
	\|\varphi_{\lambda, \xi}(\tau)\|^2_{H}\\
	& \qquad
	-M\gamma \int_0^t
	e^{-\Lambda(\tau)}\|\varphi_{\lambda, \xi}(\tau)\|^2_{H^2(\Td)}\:\d\tau.
	\label{eq:unif55}
\end{align}
on account of \eqref{eq:unif52}-\eqref{eq:unif55} and Assumption \ref{hyp:potential}, choosing $\gamma$ sufficiently large yields the following estimate
\begin{align}
	\nonumber
	&e^{-\Lambda(t)}\mathcal{E}_\lambda(\varphi_{\lambda, \xi}(t))
	+ \int_0^t e^{-\Lambda(\tau)}
	\|\varphi_{\lambda,\xi}(\tau)\|^2_{H^2(\Td)}
	\|\varphi_{\lambda, \xi}(\tau)\|^2_V\:\d\tau \\
	\nonumber
	&\qquad+ \int_0^t e^{-\Lambda(\tau)}
	\left( \|\varphi_{\lambda, \xi}(\tau)\|^2_{W^{2,6}(\Td)}
	+ \|\Psi'_\lambda(\varphi_{\lambda, \xi}(\tau))\|^2_{L^6(\Td)}
	+ \|\mu_{\lambda,	 \xi}(\tau)\|^2_V\right) \:\d \tau \\
	\nonumber
	&\leq C\bigg[ 1 + \mathcal{E}_\lambda(\varphi_0)
	+ \int_0^t e^{-\Lambda(\tau)}(\mu_{\lambda,\xi}(\tau),
	K_{\lambda, \xi}(\varphi_{\lambda,	 \xi}(\tau))\:\d W(\tau))_H
	+ \int_0^t e^{-\Lambda(\tau)}
	\mathcal{E}_\lambda(\varphi_{\lambda, \xi}(\tau)) \:\d\tau \\
	&\qquad+ \int_0^t e^{-\Lambda(\tau)} \|\varphi_{\lambda, \xi}(\tau)\|^2_{H^2(\Td)} \:\d\tau
	+ \sup_{\tau \in [0,t]}|\overline{\varphi_0}
	- \overline{\varphi_{\lambda, \xi}(\tau)}|^2
	\int_0^t e^{-\Lambda(\tau)}
	\|\Psi_\lambda'(\varphi_{\lambda,\xi}(\tau))\|^2_H \:\d\tau\bigg].
	\label{eq:unif56}
\end{align}
Recalling that $e^{-\Lambda(t)} \leq 1$ for every $t \in [0,T]$, we take $\P$-expectations and suprema in the interval $[0,t]$ of the resulting inequality, yielding, thanks also to the fact that the stochastic integral has null mean,
\begin{align}
	\nonumber
	&\supp \E e^{-\Lambda(\tau)}\mathcal{E}_\lambda(\varphi_{\lambda, \xi}(\tau))
	+ \E \int_0^t e^{-\Lambda(\tau)}
	\|\varphi_{\lambda,\xi}(\tau)\|^2_{H^2(\Td)}
	\|\varphi_{\lambda, \xi}(\tau)\|^2_V\:\d\tau
	+\E \int_0^t e^{-\Lambda(\tau)}\|\mu_{\lambda,	 \xi}(\tau)\|^2_V \:\d \tau\\
	\nonumber
	&\qquad
	+ \E \int_0^t e^{-\Lambda(\tau)}
	\|\varphi_{\lambda, \xi}(\tau)\|^2_{W^{2,6}(\Td)} \:\d\tau
	+ \E \int_0^t e^{-\Lambda(\tau)}
	\|\Psi'_\lambda(\varphi_{\lambda, \xi}(\tau))\|^2_{L^6(\Td)} \:\d\tau  \\
	\nonumber
	&\leq C\bigg[ 1 +  \E\mathcal{E}_\lambda(\varphi_0)
	+ \E \int_0^t e^{-\Lambda(\tau)}
	\mathcal{E}_\lambda(\varphi_{\lambda, \xi}(\tau)) \:\d\tau
	+  \E \int_0^t \|\varphi_{\lambda,\xi}(\tau)\|^2_{H^2(\Td)} \:\d\tau\\
	&\qquad
	+ \E \sup_{\tau \in [0,t]}
	|\overline{\varphi_0} - \overline{\varphi_{\lambda, \xi}(\tau)}|^2
	\int_0^t \|\Psi_\lambda'(\varphi_{\lambda,\xi}(\tau))\|^2_H \:\d\tau\bigg].
	\label{eq:unif57}
\end{align}
Observe now that the last term satisfies,
by \eqref{eq:unif3}, \eqref{eq:unif4}, and \eqref{eq:unif1}
with $p =4$, as well as the H\"{o}lder inequality
\begin{align*}
	\E \sup_{\tau \in [0,t]}|\overline{\varphi_0} - \overline{\varphi_{\lambda, \xi}(\tau)}|^2\int_0^t \|\Psi_\lambda'(\varphi_{\lambda,	 \xi}(\tau))\|^2_H \:\d\tau & \leq \left[\E \sup_{\tau \in [0,t]}|\overline{\varphi_0} - \overline{\varphi_{\lambda, \xi}(\tau)}|^4\right]^\frac 12\left[\E \left| \int_0^t \|\Psi_\lambda'(\varphi_{\lambda,	 \xi}(\tau))\|^2_H \:\d\tau\right|^2\right]^\frac 12 \\
	& \leq C\lambda \|\Psi_\lambda'(\varphi_{\lambda,	 \xi}(\tau))\|^2_{L^4_\cP(\Omega;L^2(0,T;H))} \\
	& \leq C.
\end{align*}
In light of \eqref{eq:unif1} with $p = 2$ and
the Gronwall lemma, there exists a constant $C_5 > 0$ such that
\begin{align}
	\nonumber
	&\supp \E e^{-\Lambda(\tau)}
	\mathcal{E}_\lambda(\varphi_{\lambda, \xi}(\tau))
	+ \E \int_0^t e^{-\Lambda(\tau)}\|\varphi_{\lambda, \xi}(\tau)\|^2_{H^2(\Td)}
	\|\varphi_{\lambda, \xi}(\tau)\|^2_V\:\d\tau
	+\E \int_0^t e^{-\Lambda(\tau)}\|\mu_{\lambda,	 \xi}(\tau)\|^2_V \:\d \tau\\
	&\qquad
	+ \E \int_0^t e^{-\Lambda(\tau)}\|\varphi_{\lambda, \xi}(\tau)\|^2_{W^{2,6}(\Td)} \:\d\tau
	+ \E \int_0^t e^{-\Lambda(\tau)}
	\|\Psi'_\lambda(\varphi_{\lambda, \xi}(\tau))\|^2_{L^6(\Td)} \:\d\tau   \leq C_5.
	\label{eq:unif5}
\end{align}
\paragraph{\textit{Sixth estimate.}} Here we give an exponential version of the first estimate \eqref{eq:unif1} with $p = 2$. The approach resembles the one given in \cite[Subsection 4.1]{orr-roc-scar}. Taking \eqref{eq:unif11}-\eqref{eq:unif13_ter} into account in \eqref{eq:unif10} yields
\begin{align}
	\nonumber
	&\|\varphi_{\lambda,\xi}(t)\|_{H}^2 +
	\int_0^t \|\Delta \varphi_{\lambda,\xi}(\tau)\|^2_H + \|\nabla \varphi_{\lambda,\xi}(\tau)\|^2_\bH \:\d\tau +
	\int_0^t \left( \Psi''_\lambda(\varphi_{\lambda,\xi}(\tau)),
	|\nabla \varphi_{\lambda, \xi}|^2)\right)_H  \: \d\tau \\
	&\qquad\leq C \left[ \|\varphi_{0}\|_{H}^2 +
	\int_0^t \left(\varphi_{\lambda,\xi}(\tau),
	K_{\lambda,\xi}(\varphi_{\lambda,\xi}(\tau))\,\mathrm{d}W(\tau)\right)_H +
	\int_0^t \|\varphi_{\lambda,\xi}(\tau)\|^2_{H} \: \mathrm{d}\tau \right].
	\label{eq:unif60}
\end{align}
For some arbitrary but fixed $\kappa > 0$, sum and substract in \eqref{eq:unif60} the quantity
\[
\dfrac{\kappa}{2}\int_0^t \left|\left(\varphi_{\lambda,\xi}(\tau), K_{\lambda,\xi}(\varphi_{\lambda,\xi}(\tau))\right)_H\right|^2 \:\d\tau,
\]
where
\[
\left(\varphi_{\lambda,\xi}(\tau), K_{\lambda,\xi}(\varphi_{\lambda,\xi}(\tau))\right)_H := \sum_{k  = 1}^{+\infty}\left(\varphi_{\lambda,\xi}(\tau), \b{g}'_k(J_\lambda(\varphi_{\lambda,	 \xi})) \cdot \nabla \mathcal{R}_\xi \varphi_{\lambda,	 \xi}\right)_H.
\]
Observe that, by the extra assumption \ref{iii} introduced at the beginning of
Section~\ref{sec:existence},
\begin{align}  
	\nonumber
	\left|\left(\varphi_{\lambda,\xi}(\tau),
	K_{\lambda,\xi}(\varphi_{\lambda,\xi}(\tau))\right)_H\right|^2 & \leq
	2\left(\left|\left(J_\lambda(\varphi_{\lambda,\xi}(\tau)),
	K_{\lambda,\xi}(\varphi_{\lambda,\xi}(\tau))\right)_H\right|^2
	+ \left|\left(\lambda\Psi'_\lambda(\varphi_{\lambda,\xi}(\tau)),
	K_{\lambda,\xi}(\varphi_{\lambda,\xi}(\tau))\right)_H\right|^2\right) \\
	\nonumber
	& \leq CL_\b{G}^2
	\left(\|J_\lambda(\varphi_{\lambda,\xi}(\tau))\|^2_{L^\infty(\Td)}
	\|\nabla \mathcal{R}_\xi\varphi_{\lambda,\xi}\|^2_\bH +
	\lambda^2\|\nabla \mathcal{R}_\xi\varphi_{\lambda,\xi}\|^2_\bH\right) \\
	\nonumber
	& \leq CL_\b{G}^2\left( 1 + \lambda^2\right)\|\nabla \varphi_{\lambda,\xi}\|^2_\bH \\
	\label{eq:unif61}
	& \leq \varepsilon_4\|\Delta \varphi_{\lambda, \xi}\|^2_H +
	C\| \varphi_{\lambda,	\xi}\|^2_H,
\end{align}
where $\varepsilon_4 > 0$ is arbitrary and $C$ depends on $\varepsilon_4$. Let now $\varepsilon_4 = \frac 12$. In light of \eqref{eq:unif61}, we arrive at
\begin{multline*}
	\|\varphi_{\lambda,\xi}(t)\|_{H}^2 +\int_0^t \|\Delta \varphi_{\lambda,\xi}(\tau)\|^2_H \:\d\tau +\int_0^t \left( \Psi''_\lambda(\varphi_{\lambda,\xi}(\tau)), |\nabla \varphi_{\lambda,	 \xi}|^2)\right)_H  \: \d\tau \\ \leq C \bigg[ \|\varphi_{0}\|_{H}^2 + \int_0^t \left(\varphi_{\lambda,\xi}(\tau), K_{\lambda,\xi}(\varphi_{\lambda,\xi}(\tau))\,\mathrm{d}W(\tau)\right)_H - \dfrac{\kappa}{2}\int_0^t \left|\left(\varphi_{\lambda,\xi}(\tau), K_{\lambda,\xi}(\varphi_{\lambda,\xi}(\tau))\right)_H\right|^2 \:\d\tau \\+ \int_0^t \|\varphi_{\lambda,\xi}(\tau)\|^2_{H} \: \mathrm{d}\tau \bigg].
\end{multline*}
After multiplying the above inequality by $\ell > 0$ and taking exponentials, the Young inequality yields
\begin{align}
	\nonumber
	&\exp\left(\ell\|\varphi_{\lambda,\xi}(t)\|_{H}^2\right) +
	\exp\left( \ell \int_0^t \|\Delta \varphi_{\lambda,\xi}(\tau)\|^2_H \:\d\tau\right) +
	\exp \left( \ell \int_0^t \left( \Psi''_\lambda(\varphi_{\lambda,\xi}(\tau)),
	|\nabla \varphi_{\lambda,	 \xi}|^2)\right)_H  \: \d\tau \right) \\
	\nonumber
	&\leq C \bigg[ \exp \left( 3\ell\|\varphi_{0}\|_{H}^2\right) +
	\exp\left( 3\ell \int_0^t \| \varphi_{\lambda,\xi}(\tau)\|^2_H \:\d\tau\right)\\
	&\quad+ \exp \left( 3\ell \int_0^t \left(\varphi_{\lambda,\xi}(\tau),
	K_{\lambda,\xi}(\varphi_{\lambda,\xi}(\tau))\,\mathrm{d}W(\tau)\right)_H
	- \dfrac{3\ell\kappa}{2}\int_0^t \left|\left(\varphi_{\lambda,\xi}(\tau),
	K_{\lambda,\xi}(\varphi_{\lambda,\xi}(\tau))\right)_H\right|^2 \:\d\tau \right) \bigg].
	\label{eq:unif62}
\end{align}
Setting $\kappa = 3\ell$, we have that the process
\[
t \mapsto \exp \left( 3\ell \int_0^t \left(\varphi_{\lambda,\xi}(\tau), K_{\lambda,\xi}(\varphi_{\lambda,\xi}(\tau))\,\mathrm{d}W(\tau)\right)_H - \dfrac{9\ell^2}{2}\int_0^t \left|\left(\varphi_{\lambda,\xi}(\tau), K_{\lambda,\xi}(\varphi_{\lambda,\xi}(\tau))\right)_H\right|^2 \:\d\tau \right)
\]
is a real positive local martingale, hence a real supermartingale, and therefore its expectation is bounded by the expectation of the initial state, namely
\begin{equation} \label{eq:unif63}
	\E\exp \left( 3\ell \int_0^t \left(\varphi_{\lambda,\xi}(\tau), K_{\lambda,\xi}(\varphi_{\lambda,\xi}(\tau))\,\mathrm{d}W(\tau)\right)_H - \dfrac{9\ell^2}{2}\int_0^t \left|\left(\varphi_{\lambda,\xi}(\tau), K_{\lambda,\xi}(\varphi_{\lambda,\xi}(\tau))\right)_H\right|^2 \:\d\tau \right) \leq 1.
\end{equation}
Taking $\P$-expectations and suprema in time of \eqref{eq:unif62} and exploiting \eqref{eq:unif63} we have
\begin{multline} \label{eq:unif64}
	\supp \E\exp\left(\ell\|\varphi_{\lambda,\xi}(\tau)\|_{H}^2\right) + \E\exp \left( \ell \int_0^t \left( \Psi''_\lambda(\varphi_{\lambda,\xi}(\tau)), |\nabla \varphi_{\lambda,	 \xi}|^2)\right)_H  \: \d\tau \right) \\+ \E\exp\left( \ell \int_0^t \|\Delta \varphi_{\lambda,\xi}(\tau)\|^2_H \:\d\tau\right) \leq C \bigg[ 1+ \E\exp \left( 3\ell\|\varphi_{0}\|_{H}^2\right) + \E\exp\left( 3\ell \int_0^t \| \varphi_{\lambda,\xi}(\tau)\|^2_H \:\d\tau\right)\bigg].
\end{multline}
Finally, the Jensen inequality and Fubini theorem entail that
\begin{equation*}
	\begin{split}
		\E \exp \left( \dfrac{\ell}{t}\int_0^t \|\varphi_{\lambda,	 \xi}(\tau)\|^2_H \:\d\tau \right) & \leq \E \left[ \dfrac{1}{t}\int_0^t \exp\left( \ell\|\varphi_{\lambda,	 \xi}(\tau)\|^2_H\right) \:\d\tau \right] \\
		& = \dfrac{1}{t}\int_0^t \E\exp\left( \ell\|\varphi_{\lambda,	 \xi}(\tau)\|^2_H\right) \:\d\tau \\
		& \leq \supp \E\exp\left( \ell\|\varphi_{\lambda,	 \xi}(\tau)\|^2_H\right) \:\d\tau,
	\end{split}
\end{equation*}
and using this estimate in \eqref{eq:unif64} we have
\begin{multline} \label{eq:unif65}
	\E \exp \left( \dfrac{\ell}{t}\int_0^t \|\varphi_{\lambda,	 \xi}(\tau)\|^2_H \:\d\tau \right) + \E\exp \left( \ell \int_0^t \left( \Psi''_\lambda(\varphi_{\lambda,\xi}(\tau)), |\nabla \varphi_{\lambda,	 \xi}|^2)\right)_H  \: \d\tau \right) \\+ \E\exp\left( \ell \int_0^t \|\Delta \varphi_{\lambda,\xi}(\tau)\|^2_H \:\d\tau\right) \leq C \bigg[ 1+ \E\exp \left( 3\ell\|\varphi_{0}\|_{H}^2\right) + \E\exp\left( 3\ell \int_0^t \| \varphi_{\lambda,\xi}(\tau)\|^2_H \:\d\tau\right)\bigg].
\end{multline}
If we consider a time instant $\sigma > 0$ such that
\[
\dfrac \ell \sigma - 3\ell \geq 0 \Leftrightarrow \sigma \leq \min\left\{	\frac 13, T\right\},
\]
then we are able to complete the estimate as follows. For convenience, we set $\sigma = \min\left\{	\frac 16, T\right\}$. Then $\frac \ell \sigma \geq 6\ell$ and \eqref{eq:unif65} reads
\begin{multline*}
	\E \exp \left( 6\ell\int_0^\sigma \|\varphi_{\lambda,	 \xi}(\tau)\|^2_H \:\d\tau \right) + \E\exp \left( \ell \int_0^\sigma \left( \Psi''_\lambda(\varphi_{\lambda,\xi}(\tau)), |\nabla \varphi_{\lambda,	 \xi}|^2)\right)_H  \: \d\tau \right) \\+ \E\exp\left( \ell \int_0^\sigma \|\Delta \varphi_{\lambda,\xi}(\tau)\|^2_H \:\d\tau\right) \\ \leq C \bigg[ 1+ \E\exp \left( 3\ell\|\varphi_{0}\|_{H}^2\right)\bigg] + \varepsilon_5\E\exp\left( 6\ell \int_0^\sigma \| \varphi_{\lambda,\xi}(\tau)\|^2_H \:\d\tau\right),
\end{multline*}
where $\varepsilon_5 > 0$ is arbitrary and the constant $C$ depends on $\varepsilon_5$. Let $\varepsilon_5 = \frac 12$. Applying a patching argument, choosing integer multiples of $\sigma$ as initial times, we are able to infer the existence of $C_6 > 0$ such that
\begin{multline} \label{eq:unif6}
	\E \exp \left( 6\ell\int_0^T \|\varphi_{\lambda,	 \xi}(\tau)\|^2_H \:\d\tau \right) \\ + \E\exp \left( \ell \int_0^T \left( \Psi''_\lambda(\varphi_{\lambda,\xi}(\tau)), |\nabla \varphi_{\lambda,	 \xi}|^2)\right)_H  \: \d\tau \right) + \E\exp\left( \ell \int_0^T \|\Delta \varphi_{\lambda,\xi}(\tau)\|^2_H \:\d\tau\right) \leq C_6,
\end{multline}
and, in particular, we have, choosing $\ell = q\gamma$, with $q > 0$ arbitrary
\[
\E \exp q\Lambda(T) \leq C_6.
\]
The constant $C_6$ may depend on $\ell$ (hence on $q$).
\paragraph{\textit{Seventh estimate.}} We are now in a position to give classical estimates for the Cahn--Hilliard equation. Consider for example the chemical potential $\mu_{\lambda, \xi}$. For any $q \geq 1$ and $r > 1$, \eqref{eq:unif6} and the H\"{o}lder inequality entail that
\begin{equation*}
	\begin{split}
		\E \left| \ii{0}{T}{\|\mu_{\lambda, \xi}(\tau)\|^2_V}{\tau}\right|^\frac{q}{2} & =\E \left| \ii{0}{T}{e^{\Lambda(\tau)}e^{-\Lambda(\tau)}\|\mu_{\lambda, \xi}(\tau)\|^2_V}{\tau}\right|^\frac{q}{2} \\
		& \leq \E \left[ e^{\frac{q}{2}\Lambda(T)}\left| \ii{0}{T}{e^{-\Lambda(\tau)}\|\mu_{\lambda, \xi}(\tau)\|^2_V}{\tau}\right|^\frac{q}{2} \right] \\
		& \leq \E \left[ e^{\frac{qr}{2(r-1)} \Lambda(T)}\right]^\frac{r-1}{r} \left[\E\left| \ii{0}{T}{e^{-\Lambda(\tau)}\|\mu_{\lambda, \xi}(\tau)\|^2_V}{\tau}\right|^\frac{qr}{2}\right]^\frac 1r\\
		& \leq C\left[\E\left| \ii{0}{T}{e^{-\Lambda(\tau)}\|\mu_{\lambda, \xi}(\tau)\|^2_V}{\tau}\right|^\frac{qr}{2}\right]^\frac 1r,
	\end{split}
\end{equation*}
where $C$ depends on $p$ and $q$. Looking back at \eqref{eq:unif5}, we find that
\[
\dfrac{qr}{2} = 1 \Leftrightarrow q = \dfrac{2}{r},
\]
and as $r > 1$, we find that $q$ can be chosen arbitrarily close to $2$. For any Banach space $X$, let
\[
L^{2-}_\cP(\Omega; X) := \bigcap_{1 \leq q < 2} L^q_\cP(\Omega;X).
\]
Then we have that
\begin{align*}
	\mu_{\lambda, \xi} & \in L^{2-}_\cP(\Omega; L^2(0,T;V)), \\
	\varphi_{\lambda, \xi} & \in L^{2-}_\cP(\Omega; L^2(0,T;W^{2,6}(\Td))), \\
	\Psi_\lambda'(\varphi_{\lambda,\xi})
	& \in L^{2-}_\cP(\Omega; L^2(0,T;L^6(\Td))).
\end{align*}
Furthermore, one has
\[
\Xi := \|\varphi_{\lambda,\xi}\|^2_{H^2(\Td)}
\|\varphi_{\lambda, \xi}\|^2_V \in L^{2-}_\cP(\Omega; L^1(0,T)).
\]
Moreover, there exists a constant $C_7$ independent of $\lambda$ and $\xi$ such that
\begin{align}
	\nonumber
	&\|\mu_{\lambda, \xi}\|_{L^q_\cP(\Omega; L^2(0,T;V))}
	+ \|\varphi_{\lambda, \xi}\|_{L^q_\cP(\Omega; L^2(0,T;W^{2,6}(\Td)))} \\
	&\qquad+
	\|\Psi'_\lambda(\varphi_{\lambda,\xi})\|_{L^q_\cP(\Omega; L^2(0,T;L^6(\Td)))}
	+ \|\Xi\|_{L^q_\cP(\Omega;L^1(0,T))} \leq C_7,
	\label{eq:unif7}
\end{align}
for every $q \in [1,2)$. The constant $C_7$ may depend on $q$.
\paragraph{\textit{Eighth estimate.}} Let us turn back to the energy estimate \eqref{eq:unif50}. Now, owing to \eqref{eq:unif7}, we are able to work with simpler controls. Indeed, recalling \eqref{eq:unif53} and \eqref{eq:unif54} (setting therein $\gamma = 0$),
taking the result to the power $\frac s2$, with $s > 0$ to be defined later, suprema in time and expectations yield
\begin{align}
	\nonumber
	&\E \supp \|\nabla \varphi_{\lambda, \xi}(\tau)\|^s_\bH
	+ \E \supp \|F_\lambda(\varphi_{\lambda,\xi}(\tau))\|_{L^1(\Td)}^\frac s2
	+ \E \left|\int_0^t \|\nabla \mu_{\lambda,\xi}(\tau)\|^2_\bH \:\d \tau\right|^\frac s2 \\
	\nonumber
	&\leq  C\bigg[ 1 + \E \|\nabla \varphi_0\|^s_\bH +
	\E \|F_\lambda(\varphi_0)\|_{L^1(\Td)}^{\frac s2}
	+ \E \sup_{u \in [0,t]} \left|\int_0^u (\mu_{\lambda, \xi}(\tau),
	K_{\lambda, \xi}(\varphi_{\lambda,\xi}(\tau))\:\d W(\tau))_H\right|^\frac{s}{2} \\
	\label{eq:unif81}
	&\qquad+\E \left| \int_0^t \|\varphi_{\lambda,\xi}(\tau)\|^2_{W^{2,6}(\Td)}\:\d\tau
	\right|^\frac s2
	+\E \left| \int_0^t (1+\|\varphi_{\lambda,\xi}(\tau)\|_{H^2(\Td)}^2)
	\|\nabla \varphi_{\lambda,\xi}(\tau)\|_\bH^2\right|^\frac s2 \bigg].
\end{align}
The Burkh\"{o}lder--Davis--Gundy inequality yields
\[
\begin{split}
	&\E \sup_{u \in [0,t]} \left|\int_0^u (\mu_{\lambda,	 \xi}(\tau), K_{\lambda,	 \xi}(\varphi_{\lambda,	 \xi}(\tau))\:\d W(\tau))_H\right|^\frac{s}{2} \\
	& \hspace{4cm}\leq \E  \left|\int_0^t \|\mu_{\lambda,	 \xi}(\tau)\|^2_H\|K_{\lambda,	 \xi}(\varphi_{\lambda,	 \xi}(\tau))\|^2_{\cL^2(U,H)}\:\d\tau\right|^\frac{s}{4} \\
	& \hspace{4cm} \leq C\E  \left|\int_0^t \|\mu_{\lambda,	 \xi}(\tau)\|^2_H\|\varphi_{\lambda,	 \xi}(\tau)\|^2_{V}\:\d\tau\right|^\frac{s}{4} \\
	& \hspace{4cm} \leq C\E \left[ \supp\|\varphi_{\lambda,	 \xi}(\tau)\|^\frac s2_{V} \left|\int_0^t \|\mu_{\lambda,	 \xi}(\tau)\|^2_H\:\d\tau\right|^\frac{s}{4}\right] \\
	& \hspace{4cm} \leq \varepsilon_6\E  \supp\|\nabla \varphi_{\lambda,	 \xi}(\tau)\|^s_{\bH} + \varepsilon_6\E  \supp\| \varphi_{\lambda,	 \xi}(\tau)\|^s_{H} + C\E\left|\int_0^t \|\mu_{\lambda,	 \xi}(\tau)\|^2_V\:\d\tau\right|^\frac{s}{2},
\end{split}
\]
where $\varepsilon_6 > 0$ is arbitrary and $C$ depends on $\varepsilon_6$. Setting $\varepsilon_6 = \frac 12$, \eqref{eq:unif81} and the above control yield
\begin{align}
	\nonumber
	&\E \supp \|\nabla \varphi_{\lambda, \xi}(\tau)\|^s_\bH
	+ \E \supp \|F_\lambda(\varphi_{\lambda,	\xi}(\tau))\|_{L^1(\Td)}^\frac s2
	+ \E \left|\int_0^t \|\nabla \mu_{\lambda,	 \xi}(\tau)\|^2_\bH \:\d \tau\right|^\frac s2 \\
	\nonumber
	&\leq  C\bigg[ 1 + \E \|\nabla \varphi_0\|^s_\bH
	+ \E \|F_\lambda(\varphi_0)\|_{L^1(\Td)}^\frac s2
	+ \E\left|\int_0^t \|\mu_{\lambda,\xi}(\tau)\|^2_V\:\d\tau\right|^\frac{s}{2}
	+\E \left| \int_0^t \|\nabla \varphi_{\lambda,\xi}(\tau)\|^2_{\bH}\:\d\tau\right|^\frac s2 \\
	&\qquad
	+\E \left| \int_0^t \|\varphi_{\lambda,\xi}(\tau)\|^2_{W^{2,6}(\Td)}\:\d\tau\right|^\frac s2
	+\E \left| \int_0^t \|\varphi_{\lambda,\xi}(\tau)\|_{H^2(\Td)}^2
	\|\nabla \varphi_{\lambda,\xi}(\tau)\|_\bH^2\right|^\frac s2 \bigg].
	\label{eq:unif82}
\end{align}
In light of \eqref{eq:unif7}, we can set $1 \leq s < 2$ arbitrarily and apply the Gronwall lemma to get the energy estimate
\begin{equation} \label{eq:unif8}
	\|\nabla \varphi_{\lambda,	 \xi}\|_{L_\cP^s(\Omega;L^\infty(0,T;\bH))} \leq C_8.
\end{equation}
\paragraph{\textit{Further estimates.}} Here we collect further estimates that are needed in order to retrieve a solution to the original problem. In what follows, we set once and for all an arbitrary real quantity $s$ such that $1 \leq s < 2$. From \eqref{eq:unif1} and \eqref{eq:unif8} we get
\begin{equation} \label{eq:unif91}
	\|\varphi_{\lambda,	 \xi}\|_{L_\cP^s(\Omega;L^\infty(0,T;V))} \leq C_9.
\end{equation}
Since $C([0,T];H) \cap L^2(0,T;H^2(\Td))\embed L^4(0,T;V)$, from Lemma \ref{lem:lipschitz} and \eqref{eq:unif1} we have
\begin{equation*}
	\|K_{\lambda, \xi}(\varphi_{\lambda,	 \xi})\|_{L_\cP^p(\Omega;L^4(0,T;\cL^2(U,H)))} \leq C_{10}
\end{equation*}
As $p \geq 4$, applying \cite[Lemma 2.1]{fland-gat} we obtain the following estimate on the stochastic integrals:
\begin{equation} \label{eq:unif92}
	\left\| \int_0^\cdot K_{\lambda, \xi}(\varphi_{\lambda,	 \xi}(\tau))\,\mathrm{d}W(\tau)
	\right\|_{L^4_\cP(\Omega; W^{k,4}(0,T; H))} \leq C_{11},
\end{equation}
with $k \in (0, \frac 12)$. By comparison, using also the embedding $H^1(0,T;V^*) \embed W^{\frac 12, 4}(0,T;V^*)$ we infer that
\begin{equation} \label{eq:unif93}
	\|\varphi_{\lambda,	 \xi}\|_{L_\cP^s(\Omega;W^{k, 4}(0,T;V^*))} \leq C_{12}.
\end{equation}
with $k \in (0, \frac 12)$.
\subsection{Passage to the limit.} Let us now investigate tightness properties of the laws of the approximating solutions. Here, we consider a fixed value of $k\in (0, \frac 12)$ so that $4k > 1$.
\begin{lem} \label{lem:tight1}
	The family of laws of $\{\varphi_{\lambda,\xi}\}_{(\lambda, \xi) \in (0,\Lambda)\times (0,\lambda)}$ is tight in the space $\mathcal{Z} :=L^2(0,T;V) \cap C^0([0,T];H)$.
\end{lem}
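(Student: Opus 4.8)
The plan is to combine the uniform estimates of Subsection~\ref{ssec:unifestCH} with classical compactness criteria of Aubin--Lions--Simon type and a Chebyshev argument. Fix once and for all an exponent $s\in[1,2)$. By \eqref{eq:unif1}, \eqref{eq:unif91} and \eqref{eq:unif93} there are a constant $C>0$, independent of $\lambda$ and $\xi$, and an exponent $k\in(0,\tfrac12)$ with $4k>1$, such that
\[
\E\|\varphi_{\lambda,\xi}\|_{L^\infty(0,T;V)}^s
+\E\|\varphi_{\lambda,\xi}\|_{L^2(0,T;H^2(\Td))}^s
+\E\|\varphi_{\lambda,\xi}\|_{W^{k,4}(0,T;V^*)}^s\le C
\qquad\forall\,(\lambda,\xi)\in(0,\Lambda)\times(0,\lambda).
\]
Introducing the Banach space
\[
Y:=L^\infty(0,T;V)\cap L^2(0,T;H^2(\Td))\cap W^{k,4}(0,T;V^*),
\]
this reads $\sup_{(\lambda,\xi)}\E\|\varphi_{\lambda,\xi}\|_Y^s<+\infty$.

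Next I would invoke two compact embeddings. Since $H^2(\Td)\embed V$ compactly and $V\embed V^*$ continuously, the fractional Aubin--Lions--Simon lemma gives that
\[
L^2(0,T;H^2(\Td))\cap W^{k,4}(0,T;V^*)\embed L^2(0,T;V)
\]
compactly. Likewise, since $V\embed H$ compactly, $H\embed V^*$ continuously, and $4k>1$, Simon's criterion for compactness in spaces of time-continuous functions yields that
\[
L^\infty(0,T;V)\cap W^{k,4}(0,T;V^*)\embed C^0([0,T];H)
\]
compactly. Hence every bounded subset of $Y$ is relatively compact in $\mathcal{Z}=L^2(0,T;V)\cap C^0([0,T];H)$.

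The tightness is then immediate. Given $\eps>0$, choose $R_\eps>0$ so large that $R_\eps^{-s}\sup_{(\lambda,\xi)}\E\|\varphi_{\lambda,\xi}\|_Y^s\le\eps$, and let $\mathcal{K}_\eps$ be the closure in $\mathcal{Z}$ of the ball $\{v\in Y:\|v\|_Y\le R_\eps\}$; by the previous step $\mathcal{K}_\eps$ is a compact subset of $\mathcal{Z}$. The Chebyshev inequality gives
\[
\P\big(\varphi_{\lambda,\xi}\notin\mathcal{K}_\eps\big)
\le\P\big(\|\varphi_{\lambda,\xi}\|_Y>R_\eps\big)
\le\frac{1}{R_\eps^{s}}\,\E\|\varphi_{\lambda,\xi}\|_Y^s\le\eps
\]
uniformly in $(\lambda,\xi)$, which is exactly the asserted tightness of the family of laws in $\mathcal{Z}$.

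The step I expect to require the most care is the compact embedding into the space $C^0([0,T];H)$ of time-continuous trajectories: unlike the embedding into $L^2(0,T;V)$, which tolerates an arbitrarily small amount of fractional time regularity, the continuity-in-time compactness genuinely needs $4k>1$. This is precisely why, in deriving \eqref{eq:unif92}--\eqref{eq:unif93} via \cite[Lemma 2.1]{fland-gat}, the parameter $k$ was taken in $(0,\tfrac12)$ and, here, further restricted so that $4k>1$.
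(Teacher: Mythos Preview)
Your proof is correct and follows essentially the same approach as the paper: uniform bounds in $L^\infty(0,T;V)\cap W^{k,4}(0,T;V^*)$ and in a higher spatial regularity space intersected with $W^{k,4}(0,T;V^*)$, combined with Simon-type compact embeddings and a Chebyshev/Markov argument. The only inessential difference is that the paper uses the $L^2(0,T;W^{2,6}(\Td))$ bound from \eqref{eq:unif7} for the compact embedding into $L^2(0,T;V)$, whereas you use the $L^2(0,T;H^2(\Td))$ bound from \eqref{eq:unif1}, which works just as well since $H^2(\Td)\embed V$ compactly.
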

\begin{proof}
	The proof of tightness follows a standard argument (see \cite[Subsection 3.3]{scarpa21} or \cite[Proposition 1]{Medjo21}). Since $4k > 1$, on account of \cite[Corollary 5]{simon}, we have that the embeddings
	\begin{equation*}
		L^\infty(0,T;V) \cap W^{k, 4}(0,T;V^*)  \embed C^0([0,T];H), \quad L^2(0,T;W^{2,6}(\Td)) \cap W^{k, 4}(0,T;V^*)  \embed L^2(0,T;V),
	\end{equation*}
	are compact. Here, the intersection spaces are endowed with their natural norm. For any $R > 0$, let $B_R$ denote the closed ball of radius $R$ in $L^\infty(0,T;V) \cap W^{k, 4}(0,T;V^*)$. Applying the Markov inequality, we have
	\[
	\begin{split}
		\P\left\{ \varphi_{\lambda,\xi} \in B_R^C \right\} & = \P\left\{ \|\varphi_{\lambda,\xi}\|_{L^\infty(0,T;V) \cap W^{k, 4}(0,T;V^*)} > R\right\} \\
		& \leq \dfrac{1}{R^s}\E\|\varphi_{\lambda,n}\|_{L^\infty(0,T;V) \cap W^{k, 4}(0,T;V^*)}^s \\
		& \leq \dfrac{C_{12}}{R^s},
	\end{split}
	\]
	where the last line follows from \eqref{eq:unif93}, provided that $1 \leq s < 2$. This yields
	\[
	\lim_{R\to+\infty} \sup_{\lambda \in (0,\Lambda)} \sup_{\xi \in (0,\lambda)} \, \P\left\{ \varphi_{\lambda,\xi} \in B_R^C \right\} = 0,
	\]
	hence we have tightness of laws in $C^0([0,T];H)$. The second claim can be proven analogously, with straightforward modifications.
\end{proof} \noindent
Next, defining
\[
K_{\lambda, \xi}(\varphi_{\lambda,	 \xi}) \cdot W := \int_0^\cdot K_{\lambda, \xi}(\varphi_{\lambda,	 \xi}(\tau))\,\mathrm{d}W(\tau).
\]
The proof of Lemma \ref{lem:tight1} can be adapted to prove a second tightness result.
\begin{lem} \label{lem:tight2}
	The family of laws of $(K_{\lambda, \xi}(\varphi_{\lambda,	 \xi}) \cdot W)_{(\lambda, \xi) \in (0,\Lambda)\times (0,\lambda)}$ is tight in the space $C^0([0,T];V^*)$.
\end{lem}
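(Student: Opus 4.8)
The plan is to adapt the stochastic--compactness argument used for Lemma~\ref{lem:tight1}: instead of the a priori bounds on $\varphi_{\lambda,\xi}$, I would rely on the estimate \eqref{eq:unif92} for the stochastic convolution $K_{\lambda,\xi}(\varphi_{\lambda,\xi})\cdot W$, and I would use the \emph{compact} embedding $H\embed V^*$ to turn a uniform time-regularity bound into relative compactness of the paths in $C^0([0,T];V^*)$, after which a Markov inequality yields tightness of the laws.

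First I would keep $k$ fixed as above, so that $4k>1$, i.e.\ $k\in(\tfrac14,\tfrac12)$, and set $\gamma:=k-\tfrac14>0$. By the one-dimensional (Banach-space valued) fractional Sobolev embedding $W^{k,4}(0,T;H)\embed C^{0,\gamma}([0,T];H)$, the estimate \eqref{eq:unif92} upgrades to
\[
\sup_{(\lambda,\xi)\in(0,\Lambda)\times(0,\lambda)}\E\,\big\|K_{\lambda,\xi}(\varphi_{\lambda,\xi})\cdot W\big\|_{C^{0,\gamma}([0,T];H)}^{4}\le C,
\]
with $C$ independent of $\lambda$ and $\xi$.

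Next I would prove that, for every $R>0$, the closed ball $\mathcal K_R$ of radius $R$ in $C^{0,\gamma}([0,T];H)$ is relatively compact in $C^0([0,T];V^*)$. Indeed, for $f\in\mathcal K_R$ and $s,t\in[0,T]$ one has $\|f(t)-f(s)\|_{V^*}\le C\|f(t)-f(s)\|_H\le CR\,|t-s|^{\gamma}$, so $\mathcal K_R$ is equicontinuous as a subset of $C^0([0,T];V^*)$; moreover $\{f(t):f\in\mathcal K_R,\ t\in[0,T]\}$ is bounded in $H$, hence relatively compact in $V^*$ since the embedding $H\embed V^*$ is compact. The Arzelà--Ascoli theorem then gives relative compactness of $\mathcal K_R$ in $C^0([0,T];V^*)$. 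Finally, the Markov inequality yields
\[
\sup_{(\lambda,\xi)}\P\big\{K_{\lambda,\xi}(\varphi_{\lambda,\xi})\cdot W\notin\mathcal K_R\big\}
\le \frac{1}{R^{4}}\,\sup_{(\lambda,\xi)}\E\,\big\|K_{\lambda,\xi}(\varphi_{\lambda,\xi})\cdot W\big\|_{C^{0,\gamma}([0,T];H)}^{4}\le \frac{C}{R^{4}},
\]
which tends to $0$ as $R\to+\infty$, uniformly in $(\lambda,\xi)$; this is precisely tightness of the laws in $C^0([0,T];V^*)$.

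The argument is essentially routine, as the paper itself indicates. The only points requiring a little care are matching the exponents in the fractional Sobolev embedding, which forces $4k>1$ and is therefore consistent with the running choice of $k$, and — exactly as in Lemma~\ref{lem:tight1}, but now with the simpler compact embedding $H\embed V^*$ in place of an Aubin--Lions--Simon type embedding — using compactness to convert the uniform time-Hölder bound into relative compactness of paths. I do not expect a genuine obstacle here: the only delicate ingredient, namely the regularity estimate \eqref{eq:unif92} for the stochastic convolution, has already been established.
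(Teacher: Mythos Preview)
Your proof is correct and follows essentially the same approach as the paper: both rely on the uniform bound \eqref{eq:unif92}, the compact embedding $W^{k,4}(0,T;H)\embed C^0([0,T];V^*)$ (valid since $4k>1$ and $H\embed V^*$ is compact), and a Markov inequality. The only difference is cosmetic: the paper cites this compact embedding directly from \cite[Theorem~2.2]{fland-gat}, whereas you unpack it via the intermediate Sobolev embedding $W^{k,4}(0,T;H)\embed C^{0,\gamma}([0,T];H)$ followed by Arzel\`a--Ascoli, which is precisely how that result is proved.
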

\begin{proof}
	In light of \cite[Theorem 2.2]{fland-gat}, we have the embedding
	\[
	W^{k, 4}(0,T;H) \embed C^0([0,T]; V^*)
	\]
	is compact.
	The argument of the proof of Lemma \ref{lem:tight1}, jointly with \eqref{eq:unif92}, is enough to complete the proof.
\end{proof} \noindent
For any couple $(\lambda,\xi) \in (0,\Lambda) \times (0,\lambda)$, we define the cylindrical Wiener process $W_{\lambda, \xi} \equiv W$. Since any measure on a complete separable metric space is tight, we directly infer that the family of laws of $\{W_{\lambda, \xi}\}_{(\lambda,\xi) \in (0,\Lambda) \times (0,\lambda)}$ is tight in $C^0([0,T]; U)$. Analogously, we set $\varphi_{0,\lambda,\xi} \equiv \varphi_0$ and it holds that the family of laws of  $\{\varphi_{0,\lambda,\xi}\}_{(\lambda,\xi) \in (0,\Lambda) \times (0,\lambda)}$ is tight in $H$. In the following, without loss of generality, we can set $\xi(\lambda) = \frac{\lambda}{2}$ and drop the index $\xi$ observing that $\lambda \to 0^+$ implies $\xi \to 0^+$. In order to pass to the limit, we apply a method due to Gy\"{o}ngy and Krylov (see \cite[Theorem 1.1]{gyo-kry}). Let $\{\lambda_n\}_{n \in \enne} \subset (0,\Lambda)$ be a strictly decreasing sequence such that $\lambda_n \to 0^+$ as $n \to +\infty$. Let $\{\lambda_{j(n)}\}_{n \in \enne}$ and $\{\lambda_{k(n)}\}_{n \in \enne}$ denote any two subsequences of $\lambda_n$.
\begin{lem} \label{lem:gkch}
	For any pair of subsequences $\{\varphi_{\lambda_{j(n)}}\}$ and $\{\varphi_{\lambda_{k(n)}}\}$ of $\{\varphi_{\lambda_n}\}$, there exists a joint subsequence $\{(\varphi_{\lambda_{\ell(j(n))}}, \varphi_{\lambda_{\ell(k(n))}})\}$ converging in law to some probability measure $\nu$ on the Borel subsets of $\mathcal{Z}^2$ such that
	\[
	\nu(\{(f_1, f_2) \in \mathcal{Z}^2: f_1 = f_2\}) = 1.
	\]
\end{lem}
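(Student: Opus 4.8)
The plan is to run the classical Gy\"ongy--Krylov device on top of a stochastic compactness / Skorokhod argument, and to close it using the pathwise uniqueness of Theorem~\ref{thm:uniqueCH}. First I would fix the two subsequences $\{\varphi_{\lambda_{j(n)}}\}$ and $\{\varphi_{\lambda_{k(n)}}\}$ and consider, on the product space
\[
\mathcal Y := \mathcal Z^2 \times C^0([0,T];V^*)^2 \times C^0([0,T];U) \times H,
\]
the joint laws of
\[
\big(\varphi_{\lambda_{j(n)}},\varphi_{\lambda_{k(n)}},
K_{\lambda_{j(n)}}(\varphi_{\lambda_{j(n)}})\cdot W,
K_{\lambda_{k(n)}}(\varphi_{\lambda_{k(n)}})\cdot W, W,\varphi_0\big).
\]
Lemma~\ref{lem:tight1} gives tightness of the first two marginals in $\mathcal Z$, Lemma~\ref{lem:tight2} of the two stochastic convolutions in $C^0([0,T];V^*)$, while the laws of $W$ and $\varphi_0$ are automatically tight on the Polish spaces $C^0([0,T];U)$ and $H$; a finite product of tight families is tight, so Prokhorov's theorem produces a subsequence — realised by the map $\ell$ of the statement — along which the joint laws converge weakly on $\mathcal Y$.

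Next I would invoke the classical Skorokhod representation theorem (all factors of $\mathcal Y$ are separable metric spaces) to obtain, on a new basis $(\widetilde\Omega,\widetilde{\mathscr F},\widetilde\P)$, copies $(\widetilde\varphi_1^n,\widetilde\varphi_2^n,\widetilde I_1^n,\widetilde I_2^n,\widetilde W^n,\widetilde\varphi_0^n)$ and a limit $(\widetilde\varphi_1,\widetilde\varphi_2,\widetilde I_1,\widetilde I_2,\widetilde W,\widetilde\varphi_0)$ with the same laws as the original vectors and $\widetilde\P$-a.s.\ convergence in $\mathcal Y$. Because $W$ and $\varphi_0$ were constant in $n$, one gets $\widetilde W^n=\widetilde W$ (a cylindrical Wiener process for the normal filtration generated by $\widetilde\varphi_1,\widetilde\varphi_2,\widetilde W$), $\widetilde\varphi_0^n=\widetilde\varphi_0$ and $\widetilde\varphi_0\laweq\varphi_0$. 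The uniform estimates \eqref{eq:unif1}, \eqref{eq:unif3}, \eqref{eq:unif5}, \eqref{eq:unif7}, \eqref{eq:unif8} transfer to the tilde variables by equality of laws; combining them with the a.s.\ convergences, the Vitali convergence theorem, and the convergences $J_\lambda\to\mathrm{id}$, $\mathcal R_\xi\to\mathrm{id}$ (valid since $\xi=\lambda/2\to0$), which yield $K_{\lambda,\xi}(\cdot)\to\div\b G(\cdot)$ and hence $\widetilde I_i=\int_0^\cdot\div\b G(\widetilde\varphi_i(\tau))\,\d\widetilde W(\tau)$, I would pass to the limit in the weak formulation \eqref{eq:weakformreg} for each $i\in\{1,2\}$. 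A monotonicity / lower-semicontinuity argument then gives $|\widetilde\varphi_i|<1$ a.e., $\widetilde\mu_i:=-\Delta\widetilde\varphi_i+F'(\widetilde\varphi_i)\in L^{p/2}_\cP(\widetilde\Omega;L^2(0,T;H))$ and $\alpha\widetilde\mu_i\in L^{p/2}_\cP(\widetilde\Omega;L^2(0,T;V))$, so that each $\widetilde\varphi_i$ is a strong solution of \eqref{eq:chac} on this basis, driven by $\widetilde W$ and starting from $\widetilde\varphi_0$, in the sense of Definition~\ref{def:solCH}.

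Finally, since $\widetilde\varphi_1$ and $\widetilde\varphi_2$ solve \eqref{eq:chac} on the \emph{same} stochastic basis, with the \emph{same} noise $\widetilde W$, the \emph{same} initial datum $\widetilde\varphi_0$ (so trivially $\overline{\widetilde\varphi_0}=\overline{\widetilde\varphi_0}$), and $\alpha>0$, Theorem~\ref{thm:uniqueCH} forces $\widetilde\varphi_1=\widetilde\varphi_2$ in $\mathcal Z$, $\widetilde\P$-a.s. As the law $\nu$ of $(\widetilde\varphi_1,\widetilde\varphi_2)$ on $\mathcal Z^2$ equals the weak limit of the laws of $(\varphi_{\lambda_{\ell(j(n))}},\varphi_{\lambda_{\ell(k(n))}})$, this gives $\nu(\{(f_1,f_2)\in\mathcal Z^2:f_1=f_2\})=1$, which is the assertion. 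The main obstacle I expect is the identification step: passing to the limit in the singular term $\Psi_\lambda'(\varphi_{\lambda,\xi})$ while simultaneously recovering the constraint $|\widetilde\varphi_i|<1$, which requires the uniform $L^{2-}_\cP(\Omega;L^2(0,T;L^6(\Td)))$ control \eqref{eq:unif7} on $\Psi_\lambda'(\varphi_{\lambda,\xi})$, the strong convergence $\widetilde\varphi_i^n\to\widetilde\varphi_i$ in $L^2(0,T;V)$, and the maximal monotonicity of $\partial\Psi$, in the spirit of the deterministic theory for logarithmic potentials — with the coupling $\xi=\xi(\lambda)$ tuned precisely so that the regularisation of the noise stays compatible with these passages to the limit. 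The Gy\"ongy--Krylov trick then upgrades convergence in law along subsequences to the required diagonal statement without needing an a priori probabilistically-strong solution.
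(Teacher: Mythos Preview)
Your proposal is correct and follows essentially the same route as the paper: tightness of the joint laws via Lemmas~\ref{lem:tight1}--\ref{lem:tight2}, Prokhorov--Skorokhod to get a.s.\ convergent copies on a new basis, passage to the limit in the nonlinearities using the uniform bounds and the strong--weak closure of $\partial\Psi$, identification of the stochastic integrals, and then Theorem~\ref{thm:uniqueCH} to force $\widetilde\varphi_1=\widetilde\varphi_2$. Two cosmetic points: the cylindrical Wiener process lives in $C^0([0,T];U_0)$ rather than $C^0([0,T];U)$, and the paper uses the Jakubowski--van der Vaart--Wellner version of Skorokhod producing measurable maps $Y_n:\widetilde\Omega\to\Omega$ (so the new processes are compositions), but your classical formulation works just as well here.
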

\begin{proof}
	Owing to Lemmas \ref{lem:tight1} and \ref{lem:tight2}, and also thanks to the Prokhorov and Skorokhod theorems (see \cite[Theorem 2.7]{ike-wata} and \cite[Theorem 1.10.4, Addendum 1.10.5]{vaa-well}), there exists a probability space $(\tom, \widetilde{\mathscr{F}},\tP)$ and a sequence of random variables $Y_n:  (\tom, \tF)\to(\Omega, \mathscr{F})$ such that the law of $Y_n$ is $\P$ for every $n \in \enne$ and so that we can construct a joint converging subsequence of $(\varphi_{\lambda_{j(n)}}, \varphi_{\lambda_{k(n)}}) \circ Y_n$ such that
	\begin{align*}
		(\widetilde{\varphi}_{\lambda_{\ell(j(n))}}, \widetilde{\varphi}_{\lambda_{\ell(k(n))}})  := (\varphi_{\lambda_{\ell(j(n))}}, \varphi_{\lambda_{\ell(k(n))}}) \circ Y_n  \to (\widetilde{\varphi}_{1}, \widetilde{\varphi}_{2}), & \quad \text{in }\mathcal{Z}^2, \quad \P\text{-a.s.,} \\
		(\widetilde{J}_{\lambda_{\ell(j(n))}}, \widetilde{J}_{\lambda_{\ell(k(n))}}) := ((K_\lambda(\varphi_{\lambda_{\ell(j(n))}}) \cdot W_\lambda, K_\lambda(\varphi_{\lambda_{\ell(k(n))}}) \cdot W_\lambda) \circ Y_n \to (\widetilde{J}_{1}, \widetilde{J}_2)  & \quad \text{in }[C^0([0,T];V^*)]^2, \,\, \P\text{-a.s.,} \\
		(\widetilde W_{\lambda_{\ell(j(n))}}, \widetilde W_{\lambda_{\ell(k(n))}})  := (\widetilde W_{\lambda_{\ell(j(n))}}, \widetilde W_{\lambda_{\ell(k(n))}}) \circ Y_n  \to (\widetilde{W}_{1}, \widetilde{W}_{2}), & \quad \text{in }[C^0([0,T];U_0)]^2, \,\, \P\text{-a.s.,} \\
		(\widetilde{\varphi}_{0,\lambda_{\ell(j(n))}}, \widetilde{\varphi}_{0,\lambda_{\ell(k(n))}})  := (\varphi_{0,\lambda_{\ell(j(n))}}, \varphi_{0,\lambda_{\ell(k(n))}}) \circ Y_n  \to (\widetilde{\varphi}_{01}, \widetilde{\varphi}_{02}), & \quad \text{in }[H]^2, \,\, \P\text{-a.s.,}
	\end{align*}where the limit processes belong to the specified spaces.
	Furthermore, it is immediate to see that $\widetilde{W}_1 = \widetilde{W}_2 =: \widetilde{W}$ and $\widetilde{\varphi}_{01} =\widetilde{\varphi}_{02}$. Then, thanks to the uniform estimates proven throughout Subsection \ref{ssec:unifestCH}, the previous strong convergence property, the Vitali, Banach--Alaoglu, and Eberlein-\v{S}mulian theorems, we have the following converging subsequences:
	\begin{align*}
		(\widetilde{\varphi}_{\lambda_{\ell(j(n))}}, \widetilde{\varphi}_{\lambda_{\ell(k(n))}})  \to (\widetilde{\varphi}_{1}, \widetilde{\varphi}_{2})
		\quad & \text{in } [L^q(\tom;L^2(0,T;V) \cap C^0([0,T];H))]^2 \text{ if } q < p, \\
		(\widetilde{\varphi}_{\lambda_{\ell(j(n))}}, \widetilde{\varphi}_{\lambda_{\ell(k(n))}})  \rightharpoonup (\widetilde{\varphi}_{1}, \widetilde{\varphi}_{2})
		\quad & \text{in } [L^q(\tom;L^2(0,T;W^{2,6}(\Td)))]^2 \text{ if } q < 2, \\
		(\widetilde{\varphi}_{\lambda_{\ell(j(n))}}, \widetilde{\varphi}_{\lambda_{\ell(k(n))}})  \overset{\ast}{\rightharpoonup} (\widetilde{\varphi}_{1}, \widetilde{\varphi}_{2})
		\quad & \text{in } [L^q_w(\tom;L^\infty(0,T;V)) \cap L^q(\tom; W^{k,4}(0,T;V^*))]^2 \text{ if } q < 2, \\
		(\widetilde{J}_{\lambda_{\ell(j(n))}}, \widetilde{J}_{\lambda_{\ell(k(n))}}) \to (\widetilde{J}_{1}, \widetilde{J}_2)
		\quad & \text{in } [L^q(\tom;C^0([0,T];V^*))]^2 \text{ if }q < 4, \\
		(\widetilde W_{\lambda_{\ell(j(n))}}, \widetilde W_{\lambda_{\ell(k(n))}})  \to (\widetilde{W}, \widetilde{W}) \quad & \text{in }[L^q(\tom;C^0([0,T];U_0))]^2, \text{ if } q < p, \\
		(\widetilde{\varphi}_{0,\lambda_{\ell(j(n))}}, \widetilde{\varphi}_{0,\lambda_{\ell(k(n))}}) \to (\widetilde{\varphi}_{01}, \widetilde{\varphi}_{02}) \quad & \text{in }[L^q(\tom;H)]^2, \text{ if } q < p.
	\end{align*}
	Furthermore, defining $\widetilde{\mu}_{\lambda_n} := \mu_\lambda \circ Y_n$,
	we also have
	\[
	(\widetilde{\mu}_{\lambda_{\ell(j(n))}}, \widetilde{\mu}_{\lambda_{\ell(k(n))}})  \rightharpoonup (\widetilde{\mu}_{1}, \widetilde{\mu}_{2})
	\quad \text{in } [L^q(\tom;L^2(0,T;V))]^2 \text{ if } q < 2.
	\]
	By the weak-strong closure of maximal monotone operators (for example, \cite[Proposition 2.1]{barbu-monot}) and the Lipschitz continuity of $R'$, jointly with the strong convergence properties given above, we also have
	\[
	(F'_\lambda(\widetilde{\varphi}_{\lambda_{\ell(j(n))}}), F'_\lambda(\widetilde{\varphi}_{\lambda_{\ell(k(n))}}))  \rightharpoonup (F'(\widetilde{\varphi}_1), F'(\widetilde{\varphi}_2)) \text{ in } [L^q(\tom;L^2(0,T;L^6(\Td)))]^2 \text{ if } q < 2.
	\]
	Next, observe that we have
	\[
	\begin{split}
		& \|K_{\lambda}(\widetilde{\varphi}_{\lambda_{\ell(j(n))}})-\div \b{G}(\widetilde{\varphi}_{1})\|_{\cL^2(U,H)}^2 \\
		& \hspace{1cm} \leq 2\|K_{\lambda}(\widetilde{\varphi}_{\lambda_{\ell(j(n))}})-K_{\lambda}(\widetilde{\varphi}_{1})\|_{\cL^2(U,H)}^2 +2  \|K_{\lambda}(\widetilde{\varphi}_{1})-\div \b{G}(\widetilde{\varphi}_{1})\|_{\cL^2(U,H)}^2 \\
		& \hspace{1cm} \leq C\|\widetilde{\varphi}_{\lambda_{\ell(j(n))}} - \widetilde{\varphi}_1\|^2_V + \sum_{k=1}^{+\infty} \|\b{g}'_k(J_\lambda(\widetilde{\varphi}_{1})) \cdot \nabla \mathcal{R}_{\frac{\lambda}{2}}\widetilde{\varphi}_{1} - \b{g}'_k(\widetilde{\varphi}_1) \cdot \nabla \widetilde{\varphi}_1\|^2_{H} \\
		& \hspace{1cm} \leq C\left[ \|\widetilde{\varphi}_{\lambda_{\ell(j(n))}} - \widetilde{\varphi}_1\|^2_V + \sum_{k=1}^{+\infty} \|(\b{g}'_k(J_\lambda(\widetilde{\varphi}_{1})) - \b{g}'_k(\widetilde{\varphi}_{1})) \cdot \nabla \mathcal{R}_{\frac{\lambda}{2}}\widetilde{\varphi}_{1}\|^2_H  + \|\b{g}'_k(\widetilde{\varphi}_1) \cdot (\nabla \mathcal{R}_{\frac{\lambda}{2}}\widetilde{\varphi}_{1} - \nabla \widetilde{\varphi}_1)\|^2_{H} \right] \\
		& \hspace{1cm} \leq C\left( \|\widetilde{\varphi}_{\lambda_{\ell(j(n))}} - \widetilde{\varphi}_1\|^2_V + \lambda_{\ell(j(n))}^2\| \nabla \mathcal{R}_{\frac{\lambda}{2}}\widetilde{\varphi}_{1}\|^2_{\b{L}^3(\Td)}\|\Psi'_\lambda(\widetilde{\varphi}_{1})\|^2_{L^6(\Td)} + \dfrac{\lambda_{\ell(j(n))}}{4}\| \Delta\widetilde{\varphi}_{1}\|^2_H  \right) \\
	\end{split}
	\]
	where we also exploited \eqref{eq:ellipticestimate} and the H\"{o}lder inequality. Testing \eqref{eq:elliptic} by $-\Delta \mathcal{R}_{\frac \lambda 2} \widetilde{\varphi}_{1}$ in $H$, we arrive at the estimate
	\[
	\dfrac \lambda 2 \| \Delta \mathcal{R}_{\frac \lambda 2} \widetilde{\varphi}_{1}\|^2_H \leq \frac{1}{2}\|\nabla \widetilde{\varphi}_{1}\|^2_\bH,
	\]
	hence
	\[
	\dfrac \lambda 2\|  \mathcal{R}_{\frac{\lambda}{2}}\widetilde{\varphi}_{1}\|_{H^2(\Td)}^2 \leq \dfrac{\lambda C}{2} +  \dfrac 12\|\nabla \widetilde{\varphi}_{1}\|^2_\bH  \leq C,
	\]
	where the constant $C$ does not depend on time. Therefore, given the Sobolev embedding $H^2(\Td) \embed W^{1,3}(\Td)$ and the monotonicity property \ref{pty:monotonicityder} of the Yosida approximation, we arrive at
	\[
	\begin{split}
		& \|K_{\lambda}(\widetilde{\varphi}_{\lambda_{\ell(j(n))}})-\div \b{G}(\widetilde{\varphi}_{1})\|_{\cL^2(U,H)}^2 \\
		& \hspace{1cm} \leq C\left( \|\widetilde{\varphi}_{\lambda_{\ell(j(n))}} - \widetilde{\varphi}_1\|^2_V + \lambda_{\ell(j(n))}\|\Psi'(\widetilde{\varphi}_{1})\|^2_{L^6(\Td)} + \dfrac{\lambda_{\ell(j(n))}}{4}\| \Delta\widetilde{\varphi}_{1}\|^2_H  \right) \\
	\end{split}
	\]
	hence, integrating the inequality in time, raising the result to the power $\frac q2$ and taking expectations leads to
	\[
	K_{\lambda}(\widetilde{\varphi}_{\lambda_{\ell(j(n))}})\to \div \b{G}(\widetilde{\varphi}_{1}) \quad \text{in } L^{q}(\tom; L^2(0,T; \cL^2(U,H))) \text{ if } q < 2.
	\]
	In particular, this entails that $\widetilde{J}_1 = \div \b{G}(\widetilde{\varphi}_1) \cdot W$. The argument also works for $\widetilde{J}_2$ with clear modifications. Observe that, owing to the previous convergence properties, it is then clear that
	\begin{equation*}
		\begin{cases}
			\d (\widetilde{\varphi}_1-\widetilde{\varphi}_2) - \Delta(\widetilde{\mu}_1 - \widetilde{\mu}_2) \,\d t = \left[ \div \left( \b{G}(\widetilde{\varphi}_1) \right) - \div \left( \b{G}(\widetilde{\varphi}_2)\right) \right] \d \widetilde W & \quad \text{ in }\T^d \times (0,T), \\
			\widetilde{\mu}_1 - \widetilde{\mu}_2 = -\Delta (\widetilde{\varphi}_1-\widetilde{\varphi}_2) + F'(\widetilde{\varphi}_1)-F'(\widetilde{\varphi}_2) & \quad \text{ in }\T^d \times (0,T), \\
			\widetilde{\varphi}_1(\cdot \:, 0)-\widetilde{\varphi}_2(\cdot \:, 0) = 0 & \quad \text{ in } \Td.
		\end{cases}
	\end{equation*}
	Uniqueness of solutions (see Theorem \ref{thm:uniqueCH}) then implies that $	\widetilde{\varphi}_1(t) = \widetilde{\varphi}_2(t)$ for every $t \in [0,T]$, in the $\tP$-almost sure sense, and a result by Gy\"{o}ngy and Krylov (see \cite[Theorem 1.1]{gyo-kry}) yields the claim.
\end{proof} \noindent
A standard argument yields then existence of a strong
solution (with only finite $q$-moments, for $q < 2$).

\subsection{Recovery of higher-order moments and conclusion} \label{ssec:higher}
Finally, we show here that the solution actually has well-defined moments up to order $p$ (compare to \eqref{eq:unif7}) and we remove the extra assumptions \ref{i}--\ref{iii}
introduced at the beginning of the section. In this direction, let the assumptions \ref{hyp:potential}--\ref{hyp:G} be in order,
and let $\varphi_0$ be given as in Definition~\ref{def:solCH}. We introduce some approximations $(\varphi_{0n})_n$ and
$(\b{G}_n)_n$ on $\varphi_0$ and $\b G$, respectively, so that
$\varphi_{0n}$ and $\b G_n$ satisfy \ref{i}--\ref{iii} for every $n\in\enne$.
For every $n\in\enne$, we set
\[
  \varphi_{0n}:=T_n(\varphi_0)
\]
where $T_n:\erre\to\erre$ is the classical traction function given by
$T_n(x):=\max\{-n, \min\{x, n\}\}$, $x\in\erre$. It is immediate to check that
$\varphi_{0n}$ continues to satisfy the condition in Definition~\ref{def:solCH},
as well as the extra assumptions \ref{i}--\ref{ii}. As far as $\b G$ is concerned,
for every $n\in\enne$ the operator
$\b{G}_n:\mathbf B^{0,\infty}_1 \to \cL^2(U,\b{H})$ is defined as
	\[
	\b{G}_n(\psi)[u_k]= \b{g}_{k,n}(\psi) =
	(g_{k,n}^i(\psi))_{i=1}^d
	\qquad\forall\,k\in\enne_+\quad\forall\,\psi\in\mathbf B^{0,\infty}_1,
	\]
where the double sequence
$(\b g_{k,n})_{k,n}\subset W^{2,\infty}(-1,1;\erre^d)$ is chosen
such that, for all $k,n\in\enne$,
\[
  \b g_{k,n}(s)=
  \begin{cases}
  \b g_k(s) \quad&\forall\,s\in[-1+\frac1n, 1-\frac1n] ,\\
  \b0 \quad&\forall\,s\in[-1,-1+\frac1{2n}]\cup[1-\frac1{2n},1],
  \end{cases}
  \qquad
  \|\b g_{k,n}\|_{W^{2,\infty}(-1,1;\erre^d)}\leq \|\b g_k\|_{W^{2,\infty}(-1,1;\erre^d)}.
\]
Note that the choice of $(\b g_{k,n})_{k,n}$
is possible by using classical convolution
with a sequence of mollifiers with compact support, and
it implies that
\[
  \lim_{n\to\infty}\b g_{k,n}(s)= \b g_{k}(s) \quad\forall\,s\in(-1,1),\quad\forall\,k\in\enne.
\]
Since $\b g_{k,n}$ has compact support in $(-1,1)$ for every $k,n\in\enne$, it is clear
that $F'\b g_{k,n}', F''|\b g_{k,n}'|^2\in L^\infty(-1,1)$ for every $k,n\in\enne$.
Moreover, a direct computation and assumption \ref{hyp:G} readily give
\[
  \sum_{k  = 1}^{+\infty} \left[\|F'\b{g}'_{k,n}\|^2_{\b{L}^\infty(-1,1)}
	+\|F''|\b{g}'_{k,n}|^2\|_{\b{L}^\infty(-1,1)}
	\right]
  \leq\sup_{s\in[-1+\frac1{2n}, 1-\frac1{2n}]}\left(|F'(s)|^2+|F''(s)|\right)L_{\b G}^2,
\]
so that also \ref{iii} is satisfied for every $n\in\enne$. For every $n\in\enne$ we consider then the problem
\begin{equation} \label{eq:reg_n}
	\begin{cases}
		\d \varphi_n + [-\alpha \Delta \mu_n + \beta(\mu_n-\overline{\mu_n}) ] \,\d t
		=\operatorname{div}\b G_n(\varphi_n) \,\d W & \quad \text{ in }\T^d \times (0,T), \\
		\mu_n = -\Delta \varphi_n + F'(\varphi_n) & \quad \text{ in }\T^d \times (0,T), \\
		\varphi_n(\cdot \:, 0) = \varphi_{0n} & \quad \text{ in } \Td.
	\end{cases}
\end{equation}
Taking into account the remarks made above, we are here in the
setting given by the extra conditions \ref{i}--\ref{iii}: hence, by the results proved in the previous section we know that the regularised problem \eqref{eq:reg_n} admits a
unique strong solution $(\varphi_n, \mu_n)$ in the sense of Definition~\ref{def:solCH}
(with only finite $q$-moments, for $q < 2$).
Let us show some uniform estimates independent of $n$, which guarantee
that as $n\to\infty$ one can recover a strong solution to the original problem
with full moments up to order $p$ as well, and without the extra assumptions \ref{i}--\ref{iii}.
The main novel point is that here one can exploit the natural bound $|\varphi_n|\leq 1$,
which was not guaranteed at the approximation in $\lambda$ performed before. First of all, a direct computation shows that
\[
  \|\varphi_{0n}\|_H\leq\|\varphi_0\|_H \quad\forall\,n\in\enne,\quad\P\text{-a.s.}
\]
and
\[
  \|\operatorname{div}\b G_n(\psi)\|_{\cL^2(U,\b H)}^2\leq L_{\b G}^2\|\nabla\psi\|_{\b H}^2
  \quad\forall\,\psi\in V, \quad\forall\,n\in\enne.
\]
Consequently, proceeding as in the \textit{First estimate} of
Subsection~\ref{ssec:unifestCH} yields analogously that
\begin{equation} \label{eq:unif1_n}
	\|\varphi_n\|_{L^p_\cP(\Omega;C^0([0,T];H))}
	+ \|\sqrt{\Psi''(\varphi_n)}\nabla \varphi_n\|_{L^p_\cP(\Omega; L^2(0,T; \b H))}
	+ \|\varphi_n\|_{L^p_\cP(\Omega; L^2(0,T;H^2(\Td)))} \leq C,
\end{equation}
where $C>0$ is independent of $n$. Secondly,
we prove some an energy inequality arguing on the original problem.
By weak lower semicontinuity we get the energy inequality
\begin{multline} \label{eq:energy1}
	\mathcal{E}(\varphi_n(t)) +
	\alpha\int_0^t \|\nabla \mu_n(\tau)\|^2_\bH \:\d \tau
	+\beta\int_0^t \| \mu_n(\tau)-\overline{\mu_n(\tau)}\|^2_\bH \:\d \tau \leq
	\mathcal{E}(\varphi_{0n})
	+ \int_0^t (\mu_n(\tau), \div \b{G}_n(\varphi_n(\tau))\:\d W(\tau))_H \\
	+ \dfrac{1}{2}\int_0^t \left[ \|\nabla \div \b{G}_n(\varphi_n(\tau))\|^2_\b{\cL^2(U,\bH)}
	+ \sum_{k  = 1}^{+\infty}\int_{\Td} F''(\varphi_n(\tau))|\b{g}'_{k,n}(\varphi_n(\tau)) \cdot \nabla\varphi_n(\tau)|^2 \right] \:\d \tau.
\end{multline}
Then, we bound the last two terms as follows. First, we have, by Assumption \ref{hyp:G}
\begin{align} \notag
	\sum_{k  = 1}^{+\infty}\int_{\Td} F''(\varphi_n(\tau))|\b{g}'_{k,n}(\varphi_n(\tau)) \cdot \nabla\varphi_n(\tau)|^2 &\leq C
	\int_{\Td}(1+\Psi''(\varphi_n(\tau)))|\nabla\varphi_n(\tau)|^2\\
	\label{eq:energy2}
	&\leq
	C\left(\|\nabla\varphi_n(\tau)\|^2_{\b H} +
	 \|\sqrt{\Psi''(\varphi_n(\tau))}\nabla\varphi_n(\tau)\|_{\b H}^2\right)
\end{align}
and then we have, recalling that $|\varphi_n| \leq 1$ almost everywhere in $\Omega \times (0,T) \times \Td$,
\begin{equation} \label{eq:energy3}
	\begin{split}
		\|\nabla \div \b{G}_n(\varphi_n(\tau))\|^2_\b{\cL^2(U,\bH)} & =
		\sum_{k  = 1}^{+\infty} \| \b{g}''_{k,n}(\varphi_n(\tau)) |\nabla \varphi_n(\tau)|^2 +
		D^2\varphi_n(\tau)\,\b{g}'_{k,n}(\varphi_n(\tau))\|^2_H \\
		& \leq 2L_{\b G}^2\left( \|\varphi_n(\tau)\|^2_{H^2(\Td)}
		+ \|\varphi_n(\tau)\|_{W^{1,4}(\Td)}^4 \right) \\
		& \leq CL_{\b G}^2\|\varphi_n(\tau)\|^2_{H^2(\Td)},
	\end{split}
\end{equation}
where in the last line we used the Gagliardo--Nirenberg inequality and the fact that
$|\varphi_n| \leq 1$ almost everywhere in $\Omega \times (0,T)\times\Td$. Collecting \eqref{eq:energy2} and \eqref{eq:energy3} in \eqref{eq:energy1}, raising the result to the power $\frac p2$, then taking suprema in time and expectations,
by exploiting \eqref{eq:energy1} we arrive at
\begin{multline} \label{eq:energy4}
	\E \supp \|\nabla \varphi_n(\tau)\|^p_\bH +
	\E \left| \int_0^t \|F(\varphi_n(\tau))\|_{L^1(\Td)}\:\d\tau\right|^\frac p2 +
	\E \left|\int_0^t \|\nabla \mu_n(\tau)\|^2_\bH \:\d \tau\right|^\frac p2 \\
	\leq  C\bigg[ 1 + \E \|\nabla \varphi_0\|^p_\bH + \E \|F(\varphi_0)\|_{L^1(\Td)}^\frac p2 +
	 \E \sups \left| \int_0^s (\mu_n(\tau), \div \b{G}_n(\varphi_n(\tau))\:\d W(\tau))_H
	 \right|^\frac p2 \bigg].
\end{multline}
Finally, we give a control on the stochastic term. We have, by the Burk\"{o}lder--Davis--Gundy inequality,
\begin{equation*}
	\begin{split}
		\E \sups \left| \int_0^s (\mu_n(\tau), \div \b{G}_n(\varphi_n(\tau))\:\d W(\tau))_H \right|^\frac p2 & \leq
		C\E \left| \int_0^t \| \nabla \mu_n(\tau)\|^2_\bH\| \div \b{G}_n(\varphi_n(\tau))\|^2_{\cL^2(U,V_0^*)} \:\d \tau \right|^\frac p4 \\
		& \leq C\E \left| \int_0^t \| \nabla \mu_n(\tau)\|^2_\bH\| \varphi_n(\tau)\|^2_H
		 \:\d \tau \right|^\frac p4 \\
		& \leq C\E \left[ \supp \|\varphi_n(\tau)\|^\frac p2_\bH \left|
		\int_0^t \|\nabla \mu_n(\tau)\|^2_\bH \:\d \tau \right|^\frac p4 \right] \\
		& \leq C\left[ \E \supp \|\varphi_n(\tau)\|_H^p \right]^\frac12
		 \left[ \E \left| \int_0^t \|\nabla \mu_n(\tau)\|^2_\bH \:\d \tau
		 \right|^\frac p2 \right]^\frac 12 \\
		& \leq \dfrac 12 \E \left| \int_0^t \|\nabla \mu_n(\tau)\|^2_\bH \:\d \tau
		 \right|^\frac p2 + C\E \supp \|\varphi_n(\tau)\|_H^p,
	\end{split}
\end{equation*}
and therefore, we find by \eqref{eq:energy4}
\begin{multline}
	\E \supp \|\nabla \varphi_n(\tau)\|^p_\bH
	+ \E \left| \int_0^t \| F(\varphi_n(\tau))\|_{L^1(\Td)}\:\d\tau\right|^\frac p2
	+ \E \left|\int_0^t \|\nabla \mu_n(\tau)\|^2_\bH \:\d \tau\right|^\frac p2 \\
	\leq  C\bigg[ 1  + \E \supp \|\varphi_n(\tau)\|_H^p  \bigg].
\end{multline}
Thanks to the estimate \eqref{eq:unif1_n} we obtain also
\begin{equation}\label{eq:unif2_n}
	\|\varphi_n\|_{L^p(\Omega; L^\infty(0,T; V))}
	+\|\nabla \mu_n\|_{L^p(\Omega; L^2(0,T; \b H))} \leq C .
\end{equation}
In order to recover
an estimate on the full $V$-norm for $(\mu_n)_n$,
we argue as follows. Testing the equation for $\mu_n$ by
$\varphi_n-\overline{\varphi_{0n}}$, following \cite{MZ04}, we arrive at
\[
\|F'(\varphi_n)\|_{L^1(\Td)} \leq C\left( 1 + \|\nabla \varphi_n\|_\bH\|\nabla \mu_n\|_\bH \right),
\]
and observing that $\overline{\mu} = \overline{F'(\varphi)}$, we learn that
\[
\|F'(\varphi_n)\|_{L^1(\Td)} + |\overline{\mu_n}|
\leq C\left( 1 + \|\nabla \varphi_n\|_\bH\|\nabla \mu_n\|_\bH \right).
\]
Raising the previous inequality to the power 2, integrating the result, then taking powers $\frac p4$ and expectations yields
\[
\E \left|\int_0^t |\overline{\mu_n(\tau)}|^2 \:\d\tau \right|^\frac p4
\leq C\left( 1 + \E \supp \|\nabla \varphi_n(\tau)\|_\bH^p +
\E \left| \int_0^t \|\nabla \mu_n(\tau)\|_\bH \:\d\tau \right|^\frac p2\right),
\]
yielding immediately
\begin{equation}\label{eq:unif3_n}
\|\mu_n\|_{L^{\frac p2}(\Omega; L^2(0,T;V))}\leq C.
\end{equation}
This implies by comparison in the equation for $\mu_n$ also that
\begin{equation}\label{eq:unif4_n}
\|F'(\varphi_n)\|_{L^{\frac p2}(\Omega; L^2(0,T;H))}\leq C.
\end{equation}
By exploiting the estimates \eqref{eq:unif1_n}--\eqref{eq:unif4_n} we are now able to
pass to the limit as $n\to\infty$ and conclude. Indeed,
by following the proof of Lemmas~\ref{lem:tight1}, \ref{lem:tight2}, and \ref{lem:gkch},
via stochastic compactness we infer the existence of a filtered probability space
$(\widetilde{\Omega}, \widetilde{\cF}, (\widetilde{\cF}_t)_{t\in[0,T]}, \widetilde\P)$,
a cylindrical Wiener process $\widetilde W$ on $U$,
a sequence $(Y_n)_n$ of random variables
$Y_n:\widetilde\Omega\to\Omega$
such that ${Y_n}_\#\widetilde\P=\P$ for every $n\in\enne$,
and processes
\begin{align*}
  \widetilde\varphi&\in L^p_w(\widetilde\Omega; L^\infty(0,T; V))\cap
  L^p_\cP(\widetilde\Omega; L^2(0,T; H^2(\Td))),\\
  \widetilde\mu&\in L^{\frac p2}_\cP(\widetilde\Omega; L^2(0,T; V)),
  \quad\nabla\mu\in L^p_\cP(\widetilde\Omega; L^2(0,T; \b H)),
\end{align*}
such that, as $n\to\infty$,
\begin{align*}
  \widetilde\varphi_n:=\varphi_n\circ Y_n\to\widetilde\varphi
  \quad&\text{in } L^\ell_\cP(\widetilde\Omega;C^0([0,T]; H)\cap L^2(0,T; V))
  \quad\forall\,\ell\in[1,p),\\
  \widetilde\varphi_n\rightharpoonup\widetilde\varphi
  \quad&\text{in }  L^p_\cP(\widetilde\Omega; L^2(0,T; H^2(\Td))),\\
  \widetilde\mu_n:=\mu_n\circ Y_n\to\widetilde\mu
  \quad&\text{in } L^p_\cP(\widetilde\Omega; L^2(0,T; V)),\\
  \widetilde W_n:=W\circ Y_n\to\widetilde W
  \quad&\text{in } L^\ell_\cP(\widetilde\Omega;C^0([0,T]; U_0))
  \quad\forall\,\ell\in[1,+\infty),\\
  \widetilde\varphi_{0n}:=\varphi_{0n}\circ Y_n\to\widetilde\varphi_0
  \quad&\text{in } L^\ell_\cP(\widetilde\Omega;H)
  \quad\forall\,\ell\in[1,p),
\end{align*}
Proceeding as in the proof of Lemma~\ref{lem:tight2},
the strong-weak closure of maximal monotone graphs readily implies that
\[
  F'(\widetilde\varphi_n)\rightharpoonup
  F'(\widetilde\varphi) \quad\text{in } L^p_\cP(\widetilde\Omega; L^2(0,T; H)).
\]
In oder to pass to the limit we only need to handle the convergence of the stochastic
integrals: this is the only point that differs substantially
from the proof of Lemma~\ref{lem:tight2}.
First of all, we show that
\begin{equation}
  \label{conv1_n}
  \b G_n(\widetilde\varphi_n) \to \b G(\widetilde\varphi)
  \quad\text{in } L^\ell_\cP(\widetilde\Omega; L^\ell(0,T; \cL^2(U, \b H)))
  \quad\forall\,\ell\in[1,+\infty).
\end{equation}
Indeed, we have
\begin{align*}
  \|\b G_n(\widetilde\varphi_n) - \b G(\widetilde\varphi)\|_{\cL^2(U,\b H)}^2
  &=\sum_{k=1}^\infty\|\b g_{k,n}(\widetilde\varphi_n)-\b g_k(\widetilde\varphi_n)\|_{\b H}^2\\
  &\leq2\sum_{k=1}^\infty
  \|\b g_{k,n}(\widetilde\varphi_n)-\b g_{k}(\widetilde\varphi_n)\|_{\b H}^2
  +2\sum_{k=1}^\infty
  \|\b g_{k}(\widetilde\varphi_n)-\b g_{k}(\widetilde\varphi)\|_{\b H}^2
\end{align*}
where, by construction of $(\b g_{k,n})_{k,n}$, one has that
\[
  |\b g_{k,n}(s)-\b g_{k}(s)| \leq 2\|\b g_k\|_{W^{2,\infty}(-1,1; \erre^d)}
  \mathbbm1_{[-1,-1+\frac1n]\cup[1-\frac1n, 1]}(s) \quad\forall\,s\in[-1,1].
\]
It follows then, thanks to assumption \ref{hyp:G}, that
\begin{align*}
  \|\b G_n(\widetilde\varphi_n) - \b G(\widetilde\varphi)\|_{\cL^2(U,\b H)}^2
  &\leq 8L_{\b G}^2\mathbbm1_{\{|\widetilde\varphi_n|\geq1-\frac1n\}}
  +2L_{\b G}^2\|\widetilde\varphi_n-\widetilde\varphi\|_H^2
  \quad\text{a.e.~in } \widetilde\Omega\times(0,T).
\end{align*}
The second term on the right-hand side converges to $0$
almost everywhere in $\widetilde\Omega\times(0,T)$ by the convergences
proved above. As for the first one, we have thanks to the estimate \eqref{eq:unif4_n} that
\begin{align*}
  C\geq\int_{\widetilde\Omega\times(0,T)\times\Td}
  |F'(\widetilde\varphi_n)|
  &\geq\int_{\{|\widetilde\varphi_n|\geq1-\frac1n\}}
  |F'(\widetilde\varphi_n)|\\
  &\geq\min\left\{|F'(-1+1/n)|, |F'(1-1/n)|\right\}\cdot|\{|\widetilde\varphi_n|\geq1-1/n\}|,
\end{align*}
which yields that
\[
  |\{|\widetilde\varphi_n|\geq1-1/n\}|\leq\frac{C}{\min\left\{|F'(-1+1/n)|, |F'(1-1/n)|\right\}}
  \to 0.
\]
Taking these remarks into account, \eqref{conv1_n} follows by the dominated convergence theorem. As a direct consequence of \eqref{conv1_n} one obtains that
\[
  \operatorname{div}\b G_n(\widetilde\varphi_n) \to
  \operatorname{div}\b G(\widetilde\varphi)
  \quad\text{in } L^\ell_\cP(\widetilde\Omega; L^\ell(0,T; \cL^2(U, V^*)))
  \quad\forall\,\ell\in[1,+\infty).
\]
This allows to pass to the limit as $n\to\infty$ in the approximating problem
on the space $(\widetilde\Omega, \widetilde\cF, \widetilde\P)$. By uniqueness
of the solution proved in Theorem \ref{thm:uniqueCH}, this also shows existence
of a strong solution on the original probability space $(\Omega,\cF,\P)$,
in the sense of Definition~\ref{def:solCH}, with moments up to order $p$.
This concludes the proof of Theorem \ref{thm:weakCH}.

\section{Proof of Theorem~\ref{thm:weakAC}} \label{sec:existence2}
Finally, we are left to prove existence of solutions for the Allen--Cahn problem \eqref{eq:ac}. This can be achieved by means of a vanishing viscosity limit, i.e., letting $\alpha \to 0^+$ in \eqref{eq:chac}. For simplicity, we now set $\beta = 1$ and let $\alpha > 0$ be arbitrary but fixed. Consider the problem
\begin{equation} \label{eq:mixedac}
	\begin{cases}
		\d \varphi_\alpha + \left[-\alpha \Delta \mu_\alpha + \mu_\alpha - \overline{\mu_\alpha} \right]\d t = \div \left( \b{G}(\varphi_\alpha)\right) \d W & \quad \text{ in }\T^d \times (0,T) \\
		\mu_\alpha = -\Delta \varphi_\alpha + F'(\varphi_\alpha) & \quad \text{ in }\T^d \times (0,T) \\
		\varphi_\alpha(\cdot \:, 0) = \varphi_0 & \quad \text{ in } \Td,
	\end{cases}
\end{equation}
which admits a strong solution for any $\alpha\in (0,1]$. Moreover, we have almost exact conservation of mass, namely
\begin{equation} \label{eq:ac_1}
	\overline{\varphi_\alpha(t)} = \overline{\varphi_0}
\end{equation}
for all times, $\P$-almost surely, as well as the key bound
\begin{equation} \label{eq:ac_2}
	\|\varphi_\alpha(t)\|_{L^\infty(\Omega\times(0,T)\times\Td)} \leq 1.
\end{equation}
Applying the It\^{o} lemma to the $H$-norm of $\varphi_\alpha$ yields
\begin{multline*}
	\dfrac{1}{2}\|\varphi_{\alpha}(t)\|_{H}^2 +\alpha\int_0^t \left[ \|\Delta \varphi_{\alpha}(\tau)\|^2_H + \left( -\Delta\varphi_{\alpha}(\tau), F'(\varphi_{\alpha}(\tau))\right)_H \right] \: \mathrm{d}\tau \\
	+ \int_0^t \left[ \|\nabla \varphi_{\alpha}(\tau)\|^2_\bH + (F'(\varphi_{\alpha}(\tau)) - \overline{F'(\varphi_{\alpha}(\tau))}, \varphi_{\alpha}(\tau))_H \right]\: \d \tau
	\\= \dfrac{1}{2}\|\varphi_{0}\|_{H}^2 + \int_0^t \left(\varphi_{\alpha}(\tau), \div (\b{G}(\varphi_{\alpha}(\tau)))\,\mathrm{d}W(\tau)\right)_H + \dfrac{1}{2}\int_0^t \|\div \b{G}(\varphi_{\alpha}(\tau))\|^2_{\cL^2(U, H)} \: \mathrm{d}\tau.
\end{multline*}
Following the same computations leading to the first estimate \eqref{eq:unif1}, jointly with the very definition of $\div \b{G}$, we have
\begin{multline} \label{eq:ac_3}
	\dfrac{1}{2}\|\varphi_{\alpha}(t)\|_{H}^2 +\int_0^t
	\left[ \alpha\|\Delta \varphi_{\alpha}(\tau)\|^2_H
	+ \left(1 - \dfrac{L_\b{G}^2}{2}  \right)\|\nabla \varphi_{\alpha}(\tau)\|^2_\bH
	+\|\sqrt{\Psi''(\varphi_\alpha(\tau))}\nabla\varphi_\alpha(\tau)\|_\bH^2
	\right] \: \d \tau
	\\ \leq  \dfrac{1}{2}\|\varphi_{0}\|_{H}^2 +
	C\int_0^t\|\varphi_{\alpha}(\tau)\|^2_H \: \d \tau +
	 \int_0^t \left(\varphi_{\alpha}(\tau), \div (\b{G}(\varphi_{\alpha}(\tau)))\,\mathrm{d}W(\tau)\right)_H
\end{multline}
Let us recall that if $L_\b{G} \leq 2^\frac 12$, then $1 - \frac{L_\b{G}^2}{2} \geq 0$. Taking $\frac{p}{2}$-powers, suprema in time and expectations in \eqref{eq:ac_3} leads to
\begin{multline} \label{eq:ac_4}
	\E \supp \|\varphi_{\alpha}(\tau)\|_{H}^p +
	\E \left| \int_0^t \alpha\|\Delta \varphi_{\alpha}(\tau)\|^2_H \: \d \tau \right|^\frac p2
	\\+
	\E \left| \int_0^t \left(1 - \dfrac{L_\b{G}^2}{2}  \right)
	\|\nabla \varphi_{\alpha}(\tau)\|^2_\bH \: \d \tau \right|^\frac p2
	+\E \left| \int_0^t
	\|\sqrt{\Psi''(\varphi_\alpha(\tau))}
	\nabla \varphi_{\alpha}(\tau)\|^2_\bH \: \d \tau \right|^\frac p2
	\\ \leq  C \left[ \E \|\varphi_{0}\|_{H}^p +
	\E \left| \int_0^t \| \varphi_{\alpha}(\tau)\|^2_H \: \d \tau \right|^\frac p2
	+\E \sups \left| \int_0^s \left(\varphi_{\alpha}(\tau), \div (\b{G}(\varphi_{\alpha}(\tau)))\,\mathrm{d}W(\tau)\right)_H \right|^\frac p2\right].
\end{multline}
The Burkh\"{o}lder--Davis--Gundy inequality entails that
\begin{align}
	\nonumber
	&C\E \sups\left| \int_0^s  \left(\varphi_{\alpha}(\tau), \div (\b{G}(\varphi_{\alpha}(\tau)))\,\mathrm{d}W(\tau)\right)_H \right|^\frac p2  \\
	\nonumber
	& \qquad\leq C \E \left| \int_0^t  \|\varphi_{\alpha}(\tau)\|^2_H\|  \div (\b{G}(\varphi_{\alpha}(\tau)))\|^2_{\cL^2(U,H)}\:\d\tau \right|^\frac p4 \\
	\nonumber
	& \qquad \leq C\E \left| \supp \|\varphi_{\alpha}(\tau)\|_H^2\int_0^t \|  \div (\b{G}(\varphi_{\alpha}(\tau)))\|^2_{\cL^2(U,H)}\:\d\tau \right|^\frac p4 \\
	\nonumber
	& \qquad \leq C\E\left[ \supp \|\varphi_{\alpha}(\tau)\|_H^\frac p2 \left|\int_0^t  \| \nabla \varphi_{\alpha}(\tau)\|_\bH^2\:\d\tau \right|^\frac p4 \right] \\
	\nonumber
	& \qquad
	\leq C\left[ \E\supp  \|\varphi_{\alpha}(\tau)\|^p_H \right]^\frac 12 \left[\E \left|\int_0^t  \| \nabla \varphi_{\alpha}(\tau)\|_\bH^2\:\d\tau \right|^\frac p2 \right]^\frac 12 \\
	& \qquad \leq \left(\dfrac{1}{2} - \dfrac{L_\b{G}^2}{4} \right)\left[\E \left|\int_0^t  \| \nabla \varphi_{\alpha}(\tau)\|_\bH^2\:\d\tau \right|^\frac p2 \right] + C,
	\label{eq:ac_5}
\end{align}
where in the last line we used \eqref{eq:ac_2}. Collecting \eqref{eq:ac_5} in \eqref{eq:ac_4} and using the Gronwall lemma gives
\begin{align} \label{eq:ac_6}
	\|\varphi_\alpha\|_{L^p_\cP(\Omega;L^\infty(0,T;H))
	\cap L^p_\cP(\Omega;L^2(0,T;V))}
	+\alpha^{\frac12} \|\varphi_\alpha\|_{L^p_\cP(\Omega;L^2(0,T;H^2(\Td)))}&\leq C_1\\
	\label{eq:ac_6'}
	\|\sqrt{\Psi''(\varphi_\alpha)}\nabla\varphi_\alpha\|_{L^p_\cP(\Omega;L^2(0,T;\bH))}
	&\leq C_1,
\end{align}
where the constant $C_1 > 0$ is independent of $\alpha$.
The energy inequality proved in the previous subsection shows
by analogous computations that
\begin{multline} \label{eq:ac_7}
	\E \supp \|\nabla \varphi_\alpha(\tau)\|^p_\bH +
	\E \left| \int_0^t \| F(\varphi_\alpha(\tau))\|_{L^1(\Td)}\:\d\tau\right|^\frac p2 \\
	+
	\E \left|\int_0^t \alpha\|\nabla \mu_\alpha(\tau)\|^2_\bH \:\d \tau\right|^\frac p2+
	\E \left|\int_0^t \| \mu_\alpha(\tau)-\overline{\mu_\alpha(\tau)}\|^2_\bH
	 \:\d \tau\right|^\frac p2
	 \\ \leq  C\bigg[ 1 + \E \|\nabla \varphi_0\|^p_\bH + \E \|F(\varphi_0)\|_{L^1(\Td)}^\frac p2
	+ L_{\b G}^p\E \left|\int_0^t \|\varphi_\alpha(\tau)\|^2_{H^2(\Td)} \:\d \tau\right|^\frac p2\\
	+\E \left| \int_0^t
	\|\sqrt{\Psi''(\varphi_\alpha(\tau))}
	\nabla \varphi_{\alpha}(\tau)\|^2_\bH \: \d \tau \right|^\frac p2
	+\E \sups \left| \int_0^s (\mu_\alpha(\tau),
	\div \b{G}(\varphi_\alpha(\tau))\:\d W(\tau))_H \right|^\frac p2
	 \bigg],
\end{multline}
where
\begin{equation*}
	\begin{split}
		\E \sups \left| \int_0^s (\mu_\alpha(\tau),
		\div \b{G}(\varphi_\alpha(\tau))\:\d W(\tau))_H \right|^\frac p2 & \leq
		C\E \left| \int_0^t \| \mu_\alpha(\tau)-
		\overline{\mu_\alpha(\tau)}\|^2_H
		\| \div \b{G}(\varphi_\alpha(\tau))\|^2_{\cL^2(U,H)} \:\d \tau \right|^\frac p4 \\
		& \leq CL_{\b G}^{\frac p2}\E \left| \int_0^t \| \mu_\alpha(\tau)-
		\overline{\mu_\alpha(\tau)}\|^2_H
		\| \nabla\varphi_\alpha(\tau)\|^2_\bH
		 \:\d \tau \right|^\frac p4 \\
		& \leq CL_{\b G}^{\frac p2}
		\E \left[ \supp \|\nabla\varphi_\alpha(\tau)\|^\frac p2_\bH \left|
		\int_0^t \| \mu_\alpha(\tau)-
		\overline{\mu_\alpha(\tau)}\|^2_H \:\d \tau \right|^\frac p4 \right] \\
		& \leq CL_{\b G}^{\frac p2}
		\left[ \E \supp \|\nabla\varphi_\alpha(\tau)\|_\bH^p \right]^\frac12
		 \left[ \E \left| \int_0^t
		 \| \mu_\alpha(\tau)-
		\overline{\mu_\alpha(\tau)}\|^2_H \:\d \tau
		 \right|^\frac p2 \right]^\frac 12 \\
		& \leq \dfrac 12 \E \left| \int_0^t
		\| \mu_\alpha(\tau)-
		\overline{\mu_\alpha(\tau)}\|^2_H \:\d \tau
		 \right|^\frac p2 + CL_{\b G}^{p}\E \supp \|\nabla\varphi_\alpha(\tau)\|_\bH^p,
	\end{split}
\end{equation*}
Applying \eqref{eq:ac_6}--\eqref{eq:ac_6'} in \eqref{eq:ac_7} yields
\begin{multline}
	\E \supp \|\nabla \varphi_\alpha(\tau)\|^p_\bH +
	\E \left| \int_0^t \| F(\varphi_\alpha(\tau))\|_{L^1(\Td)}\:\d\tau\right|^\frac p2 \\
	+\E \left|\int_0^t \alpha\|\nabla \mu_\alpha(\tau)\|^2_\bH \:\d \tau\right|^\frac p2+
	\E \left|\int_0^t \| \mu_\alpha(\tau)-\overline{\mu_\alpha(\tau)}\|^2_\bH
	 \:\d \tau\right|^\frac p2 \\
	\leq  C\bigg[ 1 +L_{\b G}^{p}\E \supp \|\nabla\varphi_\alpha(\tau)\|_\bH^p+
	 L_{\b G}^{p}
	 \E \left|\int_0^t \|\varphi_\alpha(\tau)\|^2_{H^2(\Td)} \:\d \tau\right|^\frac p2 \bigg].
\end{multline}
Of course, the constant $C>0$ appearing above is independent of $\alpha$,
and depends only on the structure of the problem.
Noting also that from the equation for $\mu_\alpha$ and the monotonicity of
$\Psi'$ it follows that
\begin{align*}
  \|\Delta\varphi_\alpha\|_H^2&\leq \int_{\Td}\mu_\alpha(-\Delta\varphi_\alpha)
  -\int_{\Td} R''(\varphi_\alpha)|\nabla\varphi_\alpha|^2\\
  &\leq\frac12\|\Delta\varphi_\alpha\|_H^2 + \frac12\|\mu_\alpha-\overline{\mu_\alpha}\|_H^2
  +C\|\nabla\varphi_\alpha\|^2_\bH,
\end{align*}
it is clear that there exists a constant $C_0>0$, independent of $\alpha$,
such that for $L_{\b G}^2<C_0$ one has that
\begin{equation} \label{eq:ac_8'}
	\|\varphi_\alpha\|_{L^p_\cP(\Omega;L^\infty(0,T;V))\cap
	L^p_\cP(\Omega; L^2(0,T; H^2(\Td)))} +
	\alpha^{\frac12}\|\mu_\alpha\|_{L^p_\cP(\Omega;L^2(0,T;V))} +
	\|\mu_\alpha-\overline{\mu_\alpha}\|_{L^p_\cP(\Omega;L^2(0,T;H))}\leq C_2
\end{equation}
with $C_2$ being independent of $\alpha$.
Following now the computations in the previous subsection we readily obtain
\begin{equation} \label{eq:ac_8}
	\|\mu_\alpha\|_{L^\frac p2_\cP(\Omega;L^2(0,T;H))} +
	\|F'(\varphi_\alpha)\|_{L^\frac p2_\cP(\Omega;L^2(0,T;H))}\leq C_3.
\end{equation}
Furthermore, as a consequence of the above estimates, we have
\begin{equation} \label{eq:ac_9}
	\|\div \b{G}(\varphi_\alpha)\|_{L^p_\cP(\Omega; L^\infty(0,T;\cL^2(U,H))) \cap L^p_\cP(\Omega; L^2(0,T;\cL^2(U,V))) }\leq C_4,
\end{equation}
and in turn, applying \cite[Lemma 2.1]{fland-gat}, we have
\begin{equation} \label{eq:ac_10}
	\left\|\int_0^\cdot \div \b{G}(\varphi_\alpha) \: \d W\right\|_{ L^p_\cP(\Omega; W^{s, p}(0,T;H)) \cap L^2_\cP(\Omega; W^{s, 2}(0,T;V)) }\leq C_5.
\end{equation}
Here, $C_3, C_4, C_5$ are again positive constants independent of $\alpha$ and $s \in (0,\frac 12)$. In particular, by comparison we easily get
\[
\|\varphi_\alpha\|_{L^p_\cP(\Omega;W^{s,p}(0,T;V^*))} \leq C_6,
\]
with $C_5$ independent of $\alpha$. We are now in a position to perform a stochastic compactness argument. The following is a tightness lemma.
\begin{lem} \label{lem:tight3}
	Assume that $p > 2$. Then, the following tightness properties hold:
	\begin{enumerate}[(i)] \itemsep 0.5em
		\item the family of laws of $\{\varphi_\alpha\}_{\alpha > 0}$ is tight in the space $C^0([0,T];H) \cap L^2(0,T;V)$;
		\item the family of laws of the stochastic integrals $\{\div \b{G}(\varphi_\alpha) \cdot W \}_{\alpha > 0}$ is tight in the space $C^0([0,T];V^*)$;
		\item if we identify the Wiener process $W$ with a constant sequence $\{W_\alpha\}_{\alpha >0}$, then the corresponding family of laws
		is tight in the space $C^0([0,T];U_0)$;
		\item if we identify the initial condition $\varphi_0$ with a constant sequence $\{\varphi_{0,\alpha}\}_{\alpha >0} \subset V$, then the corresponding
		family of laws is tight in the space $H$.
	\end{enumerate}
\end{lem}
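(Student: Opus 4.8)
The plan is to follow the blueprint of Lemmas~\ref{lem:tight1} and~\ref{lem:tight2} essentially verbatim, the only difference being that one now feeds in the $\alpha$-uniform a priori bounds \eqref{eq:ac_8'} and \eqref{eq:ac_10} together with the comparison estimate $\|\varphi_\alpha\|_{L^p_\cP(\Omega;W^{s,p}(0,T;V^*))}\leq C_6$ in place of the $\lambda$-uniform ones used there. Since the standing hypothesis here is $p>2$, I would first fix an exponent $s\in(0,\tfrac12)$ with $sp>1$; this is precisely the room needed for the time-fractional Sobolev--Lions compactness results to apply, and it is the reason the assumption $p>2$ is imposed.

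For item (i), I would invoke \cite[Corollary 5]{simon} to obtain that the embeddings
\[
L^\infty(0,T;V)\cap W^{s,p}(0,T;V^*)\embed C^0([0,T];H),
\qquad
L^2(0,T;H^2(\Td))\cap W^{s,p}(0,T;V^*)\embed L^2(0,T;V)
\]
are compact, the intersection spaces carrying their natural norms. Then, denoting by $B_R$ a closed ball of radius $R$ in the relevant intersection space and applying the Markov inequality together with \eqref{eq:ac_8'} and the comparison bound above, one gets $\P\{\varphi_\alpha\notin B_R\}\leq C R^{-p}$ with $C$ independent of $\alpha$, whence tightness in $C^0([0,T];H)\cap L^2(0,T;V)$ upon sending $R\to+\infty$. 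For item (ii) the argument is identical, now using the compact embedding $W^{s,p}(0,T;H)\embed C^0([0,T];V^*)$ from \cite[Theorem 2.2]{fland-gat} (again valid because $sp>1$) together with the stochastic-integral bound \eqref{eq:ac_10}. Items (iii) and (iv) are immediate: $C^0([0,T];U_0)$ and $H$ are Polish spaces, the corresponding families consist of a single law repeated along $\alpha$, and every Borel probability measure on a Polish space is tight.

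I do not expect any genuine obstacle here; this lemma is essentially bookkeeping. The only points requiring a little care are, first, the compatibility of exponents—one must ensure $p>2$ so that a time-regularity exponent $s\in(0,\tfrac12)$ with $sp>1$ is available—and, second, checking that every constant entering \eqref{eq:ac_8'}, \eqref{eq:ac_10} and the comparison bound is genuinely independent of $\alpha$, which is guaranteed by their derivation in the preceding estimates, the crucial inputs being the exact mass conservation \eqref{eq:ac_1} and the uniform bound $|\varphi_\alpha|\leq1$ of \eqref{eq:ac_2}, which are what make the smallness threshold $C_0$ on $L_{\b G}^2$ independent of $\alpha$ in the first place.
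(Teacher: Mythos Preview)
Your proposal is correct and takes essentially the same approach as the paper, which simply states that the proof follows those of Lemmas~\ref{lem:tight1} and~\ref{lem:tight2}. You have correctly adapted the exponents from the $(k,4)$ framework of Section~\ref{sec:existence} to the $(s,p)$ framework dictated by the available bounds \eqref{eq:ac_8'}, \eqref{eq:ac_10} and the comparison estimate, and correctly identified that the condition $sp>1$ (hence the hypothesis $p>2$) is what makes the compact embeddings from \cite{simon} and \cite{fland-gat} applicable.
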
 \noindent
The proof of the above lemma is omitted as it is follows the ones of Lemmas \ref{lem:tight1} and \ref{lem:tight2}. Owing to the Prokhorov and Skorokhod theorems (see \cite[Theorem 2.7]{ike-wata} and \cite[Theorem 1.10.4, Addendum 1.10.5]{vaa-well}), there exists a probability space $(\tom, \widetilde{\mathscr{F}},\tP)$ and a family of random variables $X_\alpha:  (\tom, \tF)\to(\Omega, \mathscr{F})$ such that the law of $X_\alpha$ is $\P$ for every $\alpha > 0$, namely $\tP \circ X_\alpha^{-1} = \P$ (so that composition with $X_\alpha$ preserves laws), and the following convergences hold
\begin{align*}
	\widetilde{\varphi}_{\alpha} := \varphi_\alpha \circ X_\alpha  \to \widetilde{\varphi}
	\quad & \text{in } L^q(\tom;L^2(0,T;V) \cap C^0([0,T];H)) \text{ if } q < p, \\
	\widetilde{\varphi}_{\alpha} \rightharpoonup \widetilde{\varphi}
	\quad & \text{in } L^p(\tom;L^2(0,T;H^2(\Td))), \\
	\widetilde{\varphi}_{\alpha} \overset{\ast}{\rightharpoonup} \widetilde{\varphi}
	\quad & \text{in } L^p_w(\tom;L^\infty(0,T;V)) \cap L^p(\tom; W^{s,p}(0,T;V^*)), \\
	\widetilde{I}_{\alpha} := (\div \b{G}(\varphi_\alpha) \cdot W) \circ X_\alpha \to \widetilde{I}
	\quad & \text{in } L^q(\tom;C^0([0,T];V^*))\text{ if }q < p, \\
	\widetilde W_{\alpha} \to \widetilde{W} \quad & \text{in }L^q(\tom;C^0([0,T];U)), \text{ if } q < p, \\
	\widetilde{\varphi}_{0,\alpha} \to \widetilde{\varphi}_{0} \quad & \text{in }L^q(\tom;H), \text{ if } q < p,
\end{align*}
provided that $sp > 1$. Here, the limit processes also belong to the specified spaces. Moreover, we also have
\[
\widetilde{\mu}_\alpha := \mu_\alpha \circ X_\alpha \rightharpoonup \widetilde \mu \quad \text{in } L^\frac p2 (\tom; L^2(0,T;H)),
\]
as well as
\begin{align*}
  \alpha\widetilde\varphi_\alpha\to 0 \quad&\text{in }
  L^p_\cP(\widetilde\Omega; L^\infty(0,T; V)\cap L^2(0,T; H^2(\Td))),\\
  \alpha\widetilde\mu_\alpha\to 0 \quad&\text{in }
  L^p_\cP(\widetilde\Omega; L^2(0,T; V)).
\end{align*}
We tackle now the two nonlinearities. First, we show that $F'(\widetilde{\varphi}_\alpha) \to F'(\widetilde{\varphi})$ in a suitable sense. Of course, we immediately have $R'(\widetilde{\varphi}_\alpha) \to R'(\widetilde{\varphi})$ by Lipschitz continuity. As for the singular part, observe that $\Psi'$ is maximal monotone. Clearly, by the uniform bound \eqref{eq:ac_8}, we have that
\[
\Psi'(\widetilde{\varphi}_\alpha) \rightharpoonup \xi \quad \text{in } L^\frac p2 (\tom; L^2(0,T;H)).
\]
By the weak-strong closure of maximal monotone operators (see, for instance, \cite[Proposition 2.1]{barbu-monot}), we identify $\xi = \Psi'(\widetilde{\varphi})$. Finally, the stochastic integral can be identified as follows (see also \cite{scarpa21}). First of all, observe that
\begin{align*}
\|\div \b{G}(\widetilde{\varphi}_\alpha) - \div \b{G}(\widetilde{\varphi})\|^2_{\cL^2(U,V^*)}
&\leq C\|\widetilde \varphi_\alpha - \widetilde \varphi\|^2_H
\end{align*}
hence, we immediately infer
\[
\div \b{G}(\widetilde{\varphi}_\alpha) \to \div \b{G}(\widetilde{\varphi}) \quad
\text{in }L^q(\tom; L^\infty(0,T;\cL^2(U,V^*))) \text{ if } q < p.
\]
The procedure to correctly identify the limit integral $\widetilde{I}$ is standard, for instance see \cite[Section 8.4]{dapratozab}. Consider the family of filtrations on $(\tom, \tF, \tP)$ by setting
\[
\tF_{\alpha,t} := \sigma\left\{\widetilde{\varphi}_{\alpha}(s),\,  \widetilde{I}_{\alpha}(s),\, \widetilde{W}_{\alpha}(s),\, \widetilde{\varphi}_{0,\alpha}, \, s \in [0,t]\right\},
\]
for any $t \geq 0$ and $\alpha > 0$, in such a way that  $\widetilde{W}$ is an adapted process. In particular, by preservation of laws and the definitions of Wiener process and stochastic integral, it is immediate to prove that $\widetilde W_{\alpha}$ is a $Q^0$-Wiener process on $U_0$ and the stochastic integral
\[
\widetilde{I}_{\alpha} = (\div\b{G}(\widetilde\varphi_\alpha) \cdot W) \circ X_\alpha = \int_0^t \div\b{G}(\widetilde\varphi_\alpha(\tau)) \: \mathrm{d}\widetilde{W}_{\alpha}(\tau),
\]
defines a martingale with values in the space $H$. In a similar fashion, construct the limit filtration
\[
\tF_t := \sigma\left\{\widetilde{\varphi}_{\lambda}(s),\, \widetilde{I}(s),\, \widetilde{W}(s),\, \widetilde{\varphi}_0,\, s \in [0,t]\right\}.
\]
Since $\widetilde{W}_\alpha(0) = 0$ for all $\alpha > 0$, the proven convergences immediately yield that that both $\widetilde{W}(0) = 0$ as well. Let now $t > 0$, $s \in [0,t]$ and define the space
\begin{align*}
	\X_s &:= (L^2(0,s;V) \cap C^0([0,s];H)) \times C^0([0,s]; H)  \times  C^0([0,s]; U_0)  \times H.
\end{align*}
For any arbitrary $\psi: \X_s \to \erre$ bounded and continuous, we have
\begin{equation} \label{eq:limit_n0}
	\tE \left[ \left(\widetilde{W}_{\alpha}(t) -\widetilde{W}_{\alpha}(s) \right) \psi\left( \widetilde{\varphi}_{\alpha},\, \widetilde{I}_{\alpha},\, \widetilde{W}_{\alpha},\,\widetilde{\varphi}_{0,\alpha} \right)  \right] = 0.
\end{equation}
In order to make sense of \eqref{eq:limit_n0}, the arguments of $\psi$ are intended to be restricted over $[0,s]$ when necessary and $\tE$ denotes the expectation with respect to $\tP$. Letting $\alpha \to 0^+$ in \eqref{eq:limit_n0} and applying the Lebesgue dominated convergence theorem, owing to the proven convergences and the properties of $\psi$, entails
\begin{equation} \label{eq:limit_n1}
	\tE \left[ \left(\widetilde{W}(t) -\widetilde{W}(s) \right) \psi\left( \widetilde{\varphi},\, \widetilde{I},\,   \widetilde{W},\,\widetilde{\varphi}_0  \right)  \right] = 0,
\end{equation}
which expresses the fact that $\widetilde{W}$ is a $U_0$-valued $(\tF_{t})_t$-martingale. The characterization of $Q$-Wiener processes given in \cite[Theorem 4.6]{dapratozab} leads us to compute the quadratic variation of $\widetilde{W}$. In particular, notice that \eqref{eq:limit_n0} means that, for every $v, w \in U_0$
\begin{multline*}
	\tE \left[ \left(\left( \widetilde{W}_{\alpha}(t), v \right)_{U_0}
	\left( \widetilde{W}_{\alpha}(t), w \right)_{U_0}
	-\left( \widetilde{W}_{\alpha}(s), v \right)_{U_0}
	\left( \widetilde{W}_{\alpha}(s), w \right)_{U_0}
	\right. \right. \\ \left. \left.
	- (t-s)\left(Q^0v,w\right)_{U_0}\right)
	\psi\left( \widetilde{\varphi}_{\alpha},\,
	\widetilde{I}_{\alpha},\,
	\widetilde{W}_{\alpha},\, \widetilde{\varphi}_{0,\alpha} \right)  \right] = 0,
\end{multline*}
and using once more the dominated convergence theorem, we get
\begin{equation*}
	\tE \left[ \left(\left( \widetilde{W}(t), v \right)_{U_0}
	\left( \widetilde{W}(t), w \right)_{U_0}
	-\left( \widetilde{W}(s), v \right)_{U_0}
	\left( \widetilde{W}(s), w \right)_{U_0}
	\right. \right. \left. \left.
	- (t-s)\left(Q^0v,w\right)_{U_0}\right)
	\psi\left( \widetilde{\varphi},\,
	\widetilde{I},\,
	\widetilde{W},\, \widetilde{\varphi}_{0} \right)  \right] = 0,
\end{equation*}
namely
\begin{equation*}
	\left\llangle \widetilde{W}\right\rrangle(t) = tQ^0, \qquad t \in [0,T],
\end{equation*}
which, by means of \cite[Theorem 4.6]{dapratozab}, implies that $\widetilde{W}$ is a $Q^0$-Wiener process, adapted to $(\tF_{\lambda,t})_t$. Concerning the stochastic integrals, we can argue exactly as in \eqref{eq:limit_n0}-\eqref{eq:limit_n1} to find that $\widetilde{I}_{\alpha}$ is an $H$-valued martingale. Then, \cite[Theorem 4.27]{dapratozab} yields
\begin{equation*}
	\left\llangle \widetilde{I}_{\alpha}\right\rrangle(t) = \int_0^t \div\b{G}(\widetilde{\varphi}_\alpha(\tau)) \circ [\div\b{G}(\widetilde{\varphi}_\alpha(\tau)) ]^*\: \mathrm{d}\tau
\end{equation*}
for every $t \in [0,T]$. Once again, fixing $v,w \in H$, we have
\begin{multline*}
	\tE \left[ \left(\left( \widetilde{I}_{\alpha}(t), v \right)_H \left( \widetilde{I}_{\alpha}(t), w \right)_H - \left( \widetilde{I}_{\alpha}(s), v \right)_H\left( \widetilde{I}_{\alpha}(s), w \right)_H \right. \right. \\ \left. \left. -\int_0^t \left( \div\b{G}(\widetilde{\varphi}_\alpha(\tau)) \circ [\div\b{G}(\widetilde{\varphi}_\alpha(\tau)) ]^*v, w\right)_H\d\tau\right) \psi\left( \widetilde{\varphi}_{\alpha},\, \widetilde{I}_{\alpha},\, \widetilde{W}_{\alpha},\,\widetilde{\varphi}_{0,\alpha}\right)  \right] = 0,
\end{multline*}
and, as we pass to the limit $\alpha \to 0^+$, the dominated convergence theorem implies that
\begin{multline*}
	\tE \left[ \left(\left( \widetilde{I}(t), v \right)_{V^*}\left( \widetilde{I}(t), w \right)_{V^*} - \left( \widetilde{I}(s), v \right)_{V^*}
	\left( \widetilde{I}(s), w \right)_{V^*} \right. \right. \\ \left. \left.
	-\int_0^t \left( \div\b{G}(\widetilde{\varphi}(\tau)) \circ
	 [\div\b{G}(\widetilde{\varphi}(\tau)) ]^*v, w\right)_{V^*}
	 \d\tau\right)
	 \psi\left( \widetilde{\varphi},\, \widetilde{I},\,
	 \widetilde{W},\,\widetilde{\varphi}_{0}\right)  \right] = 0,
\end{multline*}
The quadratic variation of $\widetilde{I}$ as a $V^*$-martingale is therefore
\[
\left\llangle \widetilde{I} \right\rrangle(t) = \int_0^t \div\b{G}(\widetilde{\varphi}(\tau)) \circ [\div\b{G}(\widetilde{\varphi}(\tau)) ]^*\: \mathrm{d}\tau, \qquad t \in [0,T].
\]
Finally, we can identify $\widetilde{I}$ with the martingale
\[
\widetilde{M}(t) := \int_0^t \div\b{G}(\widetilde{\varphi}(\tau))\: \mathrm{d}\widetilde{W}(\tau),
\]
which is an $H$-valued $(\tF_{t})_t$-martingale having the same quadratic variation of $ \widetilde{I}$. By \cite[Theorem 3.2]{Pard}, we can compute the quadratic variation of the difference
\begin{equation} \label{eq:limit_n2}
	\begin{split}
		\left\llangle \widetilde{M} - \widetilde{I} \right\rrangle & = \left\llangle \widetilde{M} \right\rrangle + \left\llangle \widetilde{I} \right\rrangle -2 \left\llangle \widetilde{M}, \widetilde{I} \right\rrangle \\
		& = 2\int_0^\cdot \div\b{G}(\widetilde{\varphi}(\tau)) \circ [\div\b{G}(\widetilde{\varphi}(\tau)) ]^*\: \mathrm{d}\tau - 2\int_0^\cdot \div\b{G}(\widetilde{\varphi}(\tau))\: \mathrm{d}\left\llangle\widetilde{W}, \widetilde{I} \right\rrangle (\tau).
	\end{split}
\end{equation}
To evaluate the cross quadratic variation in \eqref{eq:limit_n2}, notice that by \cite[Theorem 3.2]{Pard}, we have
\[
\begin{split}
	\left\llangle\widetilde{I}_{\alpha}, \widetilde{W}_{\alpha} \right\rrangle & = \int_0^\cdot \div\b{G}(\widetilde{\varphi}_{\alpha}(\tau)) \circ \iota^{-1}_1 \: \mathrm{d}\left\llangle\widetilde{W}_{\alpha}, \widetilde{W}_{\alpha}\right\rrangle (\tau) \\
	& = \int_0^\cdot \div\b{G}(\widetilde{\varphi}_{\alpha}(\tau)) \circ \iota^{-1} \circ Q^0 \: \mathrm{d}\tau \\
	& = \int_0^\cdot \div\b{G}(\widetilde{\varphi}_{\alpha}(\tau)) \circ \iota^{-1} \circ \iota \circ \iota^* \: \mathrm{d}\tau \\
	& = \int_0^\cdot \div\b{G}(\widetilde{\varphi}_{\alpha}(\tau)) \circ \iota^* \: \mathrm{d}\tau,
\end{split}
\]
where we also used the fact that $Q^0 = \iota \circ \iota^*$, where $\iota: U \to U^0$ is the Hilbert-Schmidt embedding needed to enlarge the space $U$. This implies that
\[
\left\llangle \widetilde{W}_{\alpha}, \widetilde{I}_{\alpha} \right\rrangle = \int_0^\cdot \iota \circ \div\b{G}(\widetilde{\varphi}_{\alpha}(\tau))^* \: \mathrm{d}\tau.
\]
A further application of the dominated convergence theorem entails that, as $\alpha \to 0^+$,
\begin{equation} \label{eq:limit_n3}
	\left\llangle \widetilde{W}, \widetilde{I} \right\rrangle = \int_0^\cdot \iota \circ \div\b{G}(\widetilde{\varphi}(\tau))^* \: \mathrm{d}\tau.
\end{equation}
The identification follows injecting \eqref{eq:limit_n3} in \eqref{eq:limit_n2}. Finally, passing to the limit in the weak formulation of the problem can be done by repeated applications of the dominated convergence theorem.

\section*{Acknowledgments}
The authors have been partially funded by MIUR-PRIN research grant no.~2020F3NCPX ``Mathematics for Industry 4.0 (Math4I4)''. The authors are also members of Gruppo Nazionale per l'Analisi Ma\-te\-ma\-ti\-ca, la Probabilit\`{a} e le loro Applicazioni (GNAMPA),
Istituto Nazionale di Alta Matematica (INdAM).
\printbibliography

\end{document}

\begin{defin}[strong solution to the conserved Allen--Cahn problem] \label{def:solAC}
Let $p \geq 2$ and assume that $\varphi_0$ satisfies
\[
\varphi_0 \in L^p(\Omega, \cF_0, \P; V), \qquad F(\varphi_0) \in L^\frac p2(\Omega, \cF_0, \P; L^1(\Td)).
\]
A strong solution to the Allen--Cahn problem \eqref{eq:ac} originating from the initial datum $\varphi_0$ is a stochastic process $\varphi$ such that
\begin{align}
\label{acphi}
&\varphi \in L^p_\cP(\Omega; C^0([0,T]; H))\cap
L^p_w(\Omega; L^\infty(0,T; V)) \cap
L^p_\cP(\Omega; L^2(0,T; H^2(\Td)))\,,\\
& |{\varphi}(\omega, x, t)| < 1 \text{ for a.a. } (\omega, x,t) \in \Omega \times \OO \times (0,T)\,, \\
\label{acmu}
&\mu:=-\Delta\varphi+F'(\varphi) \in L^{p}_\cP(\Omega; L^2(0,T; H))\,,\\
\label{acinitial}
&\varphi(0) = \varphi_0\,,
\end{align}
and
\begin{equation} \label{eq:weakAC_ps}
( \varphi(t),\psi)_H +
\int_0^t\!\int_{\Td}  \mu(s) \psi\,\d s
= ( \varphi(0),\psi)_{H} +
\left(\int_0^t \div \b{G}(\varphi(s)) \,\d  W(s), \psi\right)_{H}
\end{equation}
for every $\psi\in V$, $t \in [0,T]$, $\P$-almost surely.
\end{defin}

Now, let us start back by applying the It\^{o} formula to the $H$-norm of $\varphi$. This yields the standard estimate
\begin{multline}
	\dfrac 12 \|\varphi(t)\|_H^2 + \int_0^t \|\nabla \varphi(\tau)\|_\bH^2 \: \d\tau
	+ \int_0^t \left( \varphi(\tau), F'(\varphi_{1}(\tau)) -F'(\varphi_{2}(\tau))\right)_H \: \d \tau \\= \dfrac 12 \|\varphi_0\|_H^2 + \int_0^t \left(\nabla\varphi(\tau),\,
	\b{G}(\varphi_{1}(\tau))
	- \b{G}(\varphi_{2}(\tau))\,\mathrm{d}W(\tau)\right)_{\bH}  \\ + \int_0^t\|\div \b{G}(\varphi_1(\tau)) - \div \b{G}(\varphi_2(\tau))\|^2_{\cL^2(U,H)} \: \d \tau.
	\label{eq:uniqac3}
\end{multline}
On account of \eqref{eq:uniqac2}
\begin{align*}
	&\E\supp\|\varphi(\tau)\|_{\sharp}^2
	+\frac12(2-L_{\b G}^2)\E\int_0^t \|\varphi(\tau)\|^2_H \: \mathrm{d}\tau \\
	&\qquad\leq
	\E\| \varphi_0\|_{\sharp}^2 +
	2C\E\int_0^T \|\varphi(\tau)\|^2_\sharp \: \mathrm{d}\tau
	+ 2\E\sups\int_0^s \left(\varphi(\tau),\,
	\mathcal{N}[\div \b{G}(\varphi_{1}(\tau))
	- \div \b{G}(\varphi_{2}(\tau))]\,\mathrm{d}W(\tau)\right)_H
\end{align*}
In order to control the stochastic term, we make once again use of the
Burkholder--Davis--Gundy inequality.
Therefore, we have
\begin{align}
	\nonumber
	&2\E \sups \left|\int_0^s
	\left(\varphi(\tau)
	,\, \mathcal{N}[\div \b{G}(\varphi_{1}(\tau))
	- \div \b{G}(\varphi_{2}(\tau))]\,\mathrm{d}W(\tau)\right)_H \right| \\
	\nonumber
	& \qquad \leq
	C
	\E \left|\int_0^t \|\varphi(\tau)\|^2_{V^*_0}
	\|\mathcal{N}[\div \b{G}(\varphi_{1}(\tau))
	- \div \b{G}(\varphi_{2}(\tau))]\|^2_{\cL^2(U,V_0)} \:\d\tau \right|^\frac 12 \\
	\nonumber
	&\qquad\leq
	C
	\E \left|\int_0^t \|\varphi(\tau)\|^2_\sharp
	\|\varphi(\tau)\|^2_{H} \:\d\tau \right|^\frac 12 \\
	\nonumber
	&\qquad\leq
	C\E\left[ \supp\|\varphi(\tau)\|_{\sharp}
	\left|\int_0^t \| \varphi(\tau)\|^2_H  \:\d\tau \right|^\frac 12  \right] \\
	\label{eq:uniqac5}
	& \qquad \leq
	\luca{\frac12\E \supp\|\varphi(\tau)\|_{\sharp}^2 +
		\frac{C^2}2
		\E\int_0^t \|\varphi(\tau)\|^2_H  \:\d\tau,}
\end{align}
\luca{from which we infer that
	\[
	\frac12\E\supp\|\varphi(\tau)\|_{\sharp}^2
	+\frac12(2-L_{\b G}^2)\E\int_0^t \|\varphi(\tau)\|^2_H \: \mathrm{d}\tau
	\leq
	\E\| \varphi_0\|_{\sharp}^2 +
	2C
	\E\int_0^T \|\varphi(\tau)\|^2_\sharp \: \mathrm{d}\tau
	+\frac{C^2}2
	\E\int_0^T \|\varphi(\tau)\|^2_H \: \mathrm{d}\tau.
	\]
	The conclusion follows then from \eqref{contAC_prelim}.}}
